\documentclass[11pt]{article}
\usepackage{amsmath,latexsym,amssymb,amsfonts,amsbsy, amsthm}
\usepackage{graphicx}
\usepackage{ulem}
 \setlength{\oddsidemargin}{0mm}
\setlength{\evensidemargin}{0mm} \setlength{\topmargin}{-15mm}
\setlength{\textheight}{220mm} \setlength{\textwidth}{155mm}

\def\suite#1#2#3{(#1_{#2})_{#2\in {#3}}}

\def\inte#1{
\displaystyle\mathop{#1\kern0pt}^\circ }


\let\pa=\partial


\def\cD{{\cal D}}

\let\grad\nabla


\def\virgp{\raise 2pt\hbox{,}}
\def\cdotpv{\raise 2pt\hbox{;}}

\def\eqdefa{\buildrel\hbox{\footnotesize def}\over =}

\def\C{\mathop{\bf C\kern 0pt}\nolimits}
\def\DD{\mathop{\bf D\kern 0pt}\nolimits}
\def\K{\mathop{\bf K\kern 0pt}\nolimits}
\def\N{\mathop{\bf N\kern 0pt}\nolimits}
\def\Q{\mathop{\bf Q\kern 0pt}\nolimits}
\def\R{\mathop{\bf R\kern 0pt}\nolimits}
\def\SS{\mathop{\bf S\kern 0pt}\nolimits}
\def\ZZ{\mathop{\bf Z\kern 0pt}\nolimits}
\def\TT{\mathop{\bf T\kern 0pt}\nolimits}

\newcommand{\Z}{{\mathbf Z}}

\def\dive{\mathop{\rm div}\nolimits}

\def\Supp{\mathop{\rm Supp}\nolimits\ }


\newcommand{\beq}{\begin{equation}}
\newcommand{\eeq}{\end{equation}}
\newcommand{\ben}{\begin{eqnarray}}
\newcommand{\een}{\end{eqnarray}}
\newcommand{\beno}{\begin{eqnarray*}}
\newcommand{\eeno}{\end{eqnarray*}}

\newtheorem{example}{Example}[section]
\newtheorem{thm}{Theorem}[section]
\newtheorem{lem}{Lemma}[section]
\newtheorem{rmk}{Remark}[section]

\newtheorem{prop}{Proposition}[section]
\newtheorem{assumption}{Assumption}[section]
\renewcommand{\theequation}{\thesection.\arabic{equation}}




\begin{document}

\title{Global well-posedness of the 2-D incompressible Navier-Stokes-Cahn-Hilliard system with singular free energy densities}

\author{
Guilong Gui \footnote{Center for Nonlinear Studies, School of Mathematics, Northwest University, Xi'an 710069, China. Email: {\tt glgui@amss.ac.cn}.} \and Zhenbang Li \footnote{Center for Nonlinear Studies, School of Mathematics, Northwest University, Xi'an 710069, China;  School of Science, Xi'an Technological University, Xi'an 710021, China. Email: {\tt lizbmath@nwu.edu.cn}.}
}

\date{}
\maketitle

\begin{abstract}
Consideration in this paper is the effects of some singular free energy densities on global well-posedness of the 2-D incompressible Navier-Stokes-Cahn-Hilliard (NS-CH) system. Due to lack of the maximum principle for the convective Cahn-Hilliard equation (as a fourth-order parabolic equation), we construct its approximate second-order parabolic equation, and use comparison principle and the basic energy estimates to separate the solution from the singular values of the singular free energy density, where the Orlicz embedding theorem plays a key role. Based on these, we prove the global well-posedness of the Cauchy problem of the 2-D NS-CH equations with periodic domains by using energy estimates and the logarithmic Sobolev inequality.
\end{abstract}

\noindent {\sl Keywords:}  Naiver-Stokes-Cahn-Hilliard system; Singular potentials; Global well-posedness

\vskip 0.2cm

\noindent {\sl AMS Subject Classification (2010):} 35K91, 76D05, 76D45

\renewcommand{\theequation}{\thesection.\arabic{equation}}
\setcounter{equation}{0}
\section{Introduction}

We consider herein a diffuse interface model which describes the evolution of droplet formation and collision during flow of viscous
incompressible Newtonian fluids of the same density but different viscosity. There are two sides in this situation. From the macroscopical points of view, the fluids are immiscible. On the other hand, the model considers a partial mixing on a small length scale measured by a parameter $\kappa>0$, such as interface of two-phase fluids, oil and water for instance. The model was first discussed by P. C. Hohenberg and B. I. Halperin \cite{HH}, and then derived in the framework of rational continuum mechanics by M. E. Gurtin \cite{GME}.  This system consists of
the incompressible Navier-Stokes equations coupled with a convective Cahn-Hilliard equation, which leads to the following incompressible Navier-Stokes-Cahn-Hilliard (NS-CH for short) equations
\begin{equation}\label{1-NSCH-1}
\begin{cases}
&\partial_t u+u\cdot\nabla u-\dive(2\nu(\theta)Du)+\nabla p=-\kappa\dive(\nabla\theta\otimes\nabla\theta), \quad \forall\, (t, x) \in \mathbb{R}^+ \times \Omega,
\\
&\partial_t\theta+u\cdot\nabla\theta = m \Delta\mu, \quad \mu =\kappa^{-1}\phi(\theta)-\kappa\Delta\theta,
\\
&\dive u =0.
\end{cases}
\end{equation}
Here, $\Omega \subset \mathbb{R}^d$ ($d=2, \, 3$) is bounded domain with smooth boundary, the unknown $u$ is the velocity field of the fluid, $Du=\frac{1}{2}(\nabla u+\nabla u^T)$, the unknown $p$ is a scalar pressure function, the unknown $\theta$ is an order
parameter related to the concentration of the fluids, and the viscosity coefficient $\nu(\cdot)$ is a smooth, positive function on $\mathbb{R}$, $\kappa>0$ is a parameter related to the ¡°thickness¡± of the interfacial region, $\phi=\Phi'$ for some suitable energy density $\Phi$ specified below, and $\mu$ is the so-called chemical potential. It is assumed here that the densities of both components as well as the density of the mixture are constant and for simplicity equal to one. The extra force $-\kappa\dive(\nabla\theta\otimes\nabla\theta)$ appearing in the right-hand side of the first equation in \eqref{1-NSCH-1} can be considered as the capillary force due to the surface tension. Moreover, we also take into account the diffusion $m \Delta\mu$ in the the concentration $\theta$-equation in \eqref{1-NSCH-1}, where the mobility coefficient $m$ is a positive constant.

The system \eqref{1-NSCH-1} is usually posed by the following boundary and initial conditions
\begin{equation}\label{bc-1}
 \begin{cases}
 &u|_{\partial \Omega}=0,\,\partial_n \theta|_{\partial \Omega}=\partial_n \mu|_{\partial \Omega}=0,\\
 &u|_{t=0}=u_0, \, \theta|_{t=0}=\theta_0.
\end{cases}
\end{equation}

For the sake of simplicity, we may assume in the paper that the fluid occupies the two dimensional torus $\Omega=\mathbb{T}^2$, because, in the physical experiments, the shear is obtained by putting the mixture between two rotating cylinders whose diameters are very close (Couette-Taylor flows), curvature effects are usually neglected because of the thickness of the domain (see \cite{BFFP, GG}).

Thanks to the identity $-\kappa\dive\,(\nabla\theta\otimes\nabla\theta)=\mu\nabla\theta-\nabla(\frac{\kappa}{2}|\nabla\theta|^2+\kappa^{-1}\Phi(\theta))$,
the momentum equations in \eqref{1-NSCH-1} can be rewritten as
\begin{equation*}\label{1-6}
\partial_t u+u\cdot\nabla u-\dive(\nu(\theta)Du)+\nabla g=\mu\nabla\theta
\end{equation*}
with the total pressure $g:=p+\frac{\kappa}{2}|\nabla\theta|^2+\kappa^{-1}\Phi(\theta)$, where the term  $\mu\nabla\theta$ is known as Korteweg force, in which the chemical potential $\mu$ can be considered as the variational derivative of the following Ginzburg-Landau free energy
\begin{equation*}\label{1-7}
\mathcal{E}(\theta)=\int_{\mathbb{T}^2}(\frac{\kappa}{2}|\nabla\theta|^2+\kappa^{-1}\Phi(\theta))\,dx,
\end{equation*}
where the potential $\Phi(\theta):=\int^{\theta}_0\phi(\eta)\,d\eta$ is the Helmholtz free energy density.

We will mainly pay attention in this work to a theory for a class of physically
relevant, singular free energy densities $\Phi$. More precisely, we assume throughout the article:
\begin{assumption}
 \label{assumption-1}
 Let $\Phi(\cdot)\in C([a, b]) \cap C^{\infty}((a, b))$ such that $\phi=\Phi'$ satisfies
\begin{equation}\label{1-9}
\begin{split}
&\lim_{s\rightarrow a^+} \phi(s)=-\infty, \quad\lim_{s\rightarrow b^-} \phi(s)=+\infty,\\
& \qquad\qquad  \phi'(s) \geq -\alpha, \quad |\phi'(s)| \leq C_1 e^{C_2|\phi(s)|}+C_3, \quad \forall \, s \in (a, b)
\end{split}
\end{equation}
for some constants $\alpha, \, C_1, \, C_2, \, C_3>0$. We extend $\Phi(s)=+\infty$ if $s  \notin [a, b]$.
\end{assumption}

\begin{rmk}\label{rmk-Phi-1}
We claim that $\int_{\mathbb{T}^2}\Phi(\theta)\,dx <\infty$ implies $\theta(x) \in [a, b]$ for
almost every $x\in \mathbb{T}^2$.  Indeed, since
\begin{equation*}\label{1-9-1}
\begin{split}
\int_{\mathbb{T}^2}\Phi(\theta)\,dx =\int_{\{\theta <a\} \cup \{\theta>b\} }\Phi(\theta)\,dx +\int_{\theta\in [a, b]}\Phi(\theta)\,dx\geq \int_{\{\theta <a\} \cup \{\theta>b\} }\Phi(\theta)\,dx-C,
\end{split}
\end{equation*}
we get from $\int_{\mathbb{T}^2}\Phi(\theta)\,dx <\infty$  that
$meas(\{\theta <a\} \cup \{\theta>b\})=0$, that is, $\theta(x) \in [a, b]$ for almost every $x\in \mathbb{T}^2$.
\end{rmk}
Without loss of generality, we assume in the article that $\theta$ is just the concentration difference of both components and $[a, b] =
[-1, 1]$.

\begin{example}
 \label{example-1}
Assumption \ref{assumption-1} is motivated by the so-called regular solution
model free energy suggested by Cahn and Hilliard \cite{CH1958}:
\begin{equation}\label{1-8}
\Phi(s)=
\begin{cases}
&\frac{\alpha_0}{2}\bigg((1+s)\ln(1+s)+(1-s)\ln(1-s)\bigg)-\frac{\alpha}{2}s^2, \quad \mbox{if} \quad  s\in[-1,1],\\
& +\infty, \quad \mbox{if} \quad  s \notin[-1,1],
\end{cases}
\end{equation}
where two constants $\alpha_0$ and $\alpha$ satisfy $0<\alpha_0<\alpha$, $a=-1$, $b=1$. Here the logarithmic terms are related to the entropy of the system.
\end{example}

\begin{rmk}\label{rmk-5}
The function $\phi=\Phi'$ in \eqref{1-8} satisfies \eqref{1-9}.
In order to verify this, we need only to show
\begin{equation*}\label{2-19}
|\phi'(s)|\leq e^{\frac{2}{\alpha_0}|\phi(s)|+\frac{2\alpha}{\alpha_0}+\ln{\alpha_0}}+\alpha, \quad \forall \, s\in(-1,1),
\end{equation*}
where $\phi(s)=\frac{\alpha_0}{2}\ln \frac{1+s}{1-s}-\alpha\, s$ and $\phi'(s)=\frac{\alpha_0}{1-s^2}-\alpha$, $\forall \, s\in(-1,1)$, from \eqref{1-8}.
Indeed, we first introduce $f(s):=|\ln{\frac{1+s}{1-s}}|-\ln{\frac{1}{1-s^2}}$ for $s\in(-1,1)$. It is easy to find that $f(0)=0$ and $f(s) \geq 0$ for any $s\in (-1,1)$.
Therefore, it follows that $\forall\,  s\in(-1,1)$
\begin{align*}\label{2-22}
|\phi'(s)|\leq |\frac{\alpha_0}{1-s^2}|+\alpha\leq e^{\ln{|\frac{\alpha_0}{1-s^2}|}}+\alpha\leq e^{|\ln{\frac{1+s}{1-s}}|+\ln{\alpha_0}}+\alpha\leq e^{\frac{2}{\alpha_0}|\phi(s)|+\frac{2\alpha}{\alpha_0}+\ln{\alpha_0}}+\alpha.
\end{align*}
\end{rmk}

\begin{rmk}
  The logarithmic potential \eqref{1-8} is often replaced by a smooth double-well polynomial approximation \cite{AH1, BF, GG, GG2}, such as $\Phi(s)= \gamma_1 s^4-\gamma_2 s^2$ and $\Phi(s)= (1-s^2)^2$, $s\in \mathbb{R}$, where $\gamma_1$ and $\gamma_2$ are given positive constants.
\end{rmk}

Recently, there are some works devoted to the mathematical analysis of the Navier-Stokes-Cahn-Hilliard (NS-CH) system with the singular free energy density $\Phi$ satisfying Assumption \ref{assumption-1}, see \cite{AH1, AH3, AH5, AH4, FG1, MiZe2004} and the references cited therein.

In \cite{AH1}, H. Abels proved that there exist global weak solutions of the Navier-Stokes-Cahn-Hilliard system in 2D and 3D bounded domains for the singular free energy densities, and moreover, unique 'strong' solutions exist in 2D globally in time and in 3D locally in time. The existence of global weak solutions for inhomogeneous NS-CH system (where the density of the mixture depends enters the equation for singular chemical potential) was proved in 2D and 3D in \cite{AH3}.  And then, H. Abels, D. Depner, and H. Garcke \cite{AH5, AH4} investigated the existence of global weak solutions for a diffuse interface model for the flow of two viscous incompressible Newtonian fluids in a bounded domain in 2D and 3D, more generally, by allowing for a degenerate mobility. On the other hand, A. Miranville and S. Zelik \cite{MiZe2004} studied the long time behaviour of the Cahn-Hilliard equations with singular potentials (without convection), in which they were able, in two space dimensions, to separate the solutions from the singular values of the potential. S. Frigeri and M. Grasselli \cite{FG1} established the existence of a global weak solution of the nonlocal NS-CH system with no-slip and no-flux boundary conditions, and the existence of the global attractor for the 2D generalized semi-flow. Recently, A. Miranville and R. Temam \cite{MiTe2016} investigated the existence of weak solutions of the Cahn-Hilliard-Oono-Navier-Stokes euqations with singular nonlinear terms, and they pointed out that, it is not able to obtain the control of $\partial_tu$ In
$L^\infty(0,T; L^2(\Omega))$, and then the strict separation property doesn't hold (see Remark 3.2 in \cite{MiTe2016}). A. Giorgini, M. Grasselli and H. Wu \cite{GGW} researched the Cahn-Hilliard-Hele-Shaw system with singular potential, and they got the uniqueness and regularity of global weak solution by the so-called strict separation property in 2D.

While for the double-well free energy density case, there are numerous results on the study of the NS-CH system (see \cite{AMW, CG, FGR, G2016, GG, GGM, HH, LLG, LL} and their references). G. G. Gal and  M. Grasselli \cite{GG} considered NS-CH system with well-double potentials, and the asymptotic behavior of the solutions was studied in 2D bounded domain. C. Cao and C. G. Gal \cite{CG} investigated the NS-CH system without full viscosity and mobility which can be develop finite time singularities, and they proved the global existence and uniqueness of classical solution. X. Wang and Z. Zhang \cite{WZ} established existence and uniqueness of the 2D global (or 3D local) classical solution in periodic settings, as well as several blow-up criterions in the 3D case for the Hele-Shaw-Cahn-Hilliard (HS-CH) system, which can be formally viewed as an appropriate limit of the classical NS-CH system \cite{AMW, HH, LLG}. And then, the long-time behavior for the HS-CH system was demonstrated in \cite{WW}. Recently, C. G. Gal \cite{G2016} established the existence of globally defined weak solutions as well as well-posedness results for strong/classical solutions to the nonlocal incompressible Euler-Cahn-Hilliard equation (where the chemical potential $\mu=a_0\theta-J\ast \theta+\phi(\theta)$, $J$ is a spatial-dependent interaction kernel) in 2D bounded domains. The existence of a suitable
global energy solution to the NS-CH equations with moving contact lines was proved and the convergence of any such solution to a single
equilibrium was also established in \cite{GGM}.

In what follows, we will assume that $\kappa=m=1$ for simplicity, which follows that \eqref{1-NSCH-1} may be equivalently read as

\begin{equation}\label{NSCH-2}
  \begin{cases}
&\partial_t u+u\cdot\nabla u-\dive(2\nu(\theta)Du)+\nabla g=\mu\nabla\theta, \quad \forall\, (t, x)\in \mathbb{R}^{+}\times \mathbb{T}^2,
\\
&\partial_t\theta+ u\cdot\nabla\theta=\Delta\mu,\quad \mu=\phi(\theta)-\Delta\theta,\\
&\dive u=0,
\\
&(u, \theta)|_{t=0}=(u_0, \theta_0).
\end{cases}
\end{equation}

In this paper, we intend to establish the global well-posedness of the Navier-Stokes-Cahn-Hilliard system \eqref{NSCH-2} with the singular function $\phi$ in Assumption \ref{assumption-1}.
Our main result is stated as follows.

\begin{thm}\label{thm-glo-wp}
Under Assumption \ref{assumption-1}, let $s>1$, $(u_0,\theta_0)\in H^s(\mathbb{T}^2)\times H^{s}(\mathbb{T}^2)$, $\dive\, u_0=0$, $\int_{\mathbb{T}^2} \theta_0(x)\, dx=0$,
\begin{equation}\label{separate-condition-1}
\|\theta_0\|_{L^\infty(\mathbb{T}^2)}\leq 1-\delta_0
\end{equation}
for some $\delta_0\in(0,1)$, and $\nu(\cdot)$ is a smooth, positive function on $[-1, 1]$. Then the NS-CH system \eqref{NSCH-2} is globally well-posed. Moreover, there
holds that the solution $(u, \theta)$ to \eqref{NSCH-2} satisfies, for any $t>0$,
$\int_{\mathbb{T}^2} \theta(t, x)\, dx=0$, and
\begin{equation}\label{separate-condition-2}
\sup_{\tau \in [0, t]}\|\theta(\tau)\|_{L^\infty(\mathbb{T}^2)}\leq1-\delta,
\end{equation}
where $\delta=\delta(\delta_0,t)>0$,
and
\begin{align*}
&(u,\theta)\in (C(\mathbb{R}^+; H^s(\mathbb{T}^2))\cap L_{loc}^2(\mathbb{R}^+; H^{s+1}(\mathbb{T}^2)))
\\
&\qquad\qquad \times (C(\mathbb{R}^+; H^{s}(\mathbb{T}^2))\cap L_{loc}^2(\mathbb{R}^+; H^{s+2}(\mathbb{T}^2))).
\end{align*}
\end{thm}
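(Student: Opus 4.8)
The plan is to run a standard continuation argument: construct local-in-time smooth solutions, establish a priori estimates that are global in time, and in particular propagate the strict separation property \eqref{separate-condition-2}, which is the mechanism that keeps $\phi(\theta)$ and all its derivatives bounded along the flow. The delicate point, as emphasized in the introduction, is that the Cahn–Hilliard equation is fourth order and admits no maximum principle, so the separation cannot be obtained directly from \eqref{NSCH-2}. Following the strategy announced in the abstract, I would first introduce a second-order parabolic regularization of the convective Cahn–Hilliard equation — heuristically replacing $\Delta\mu=\Delta\phi(\theta)-\Delta^2\theta$ by a dissipative second-order operator for which a comparison principle is available — and combine the basic energy identity with the Orlicz embedding theorem to show that, as long as the initial data satisfies \eqref{separate-condition-1} and $\int_{\mathbb{T}^2}\theta_0\,dx=0$, the approximate solutions stay in a compact subinterval $[-1+\delta,1-\delta]$ of $(-1,1)$ on any time interval $[0,t]$, with $\delta$ depending only on $\delta_0$ and $t$ (through the basic energy $\mathcal E(\theta_0)+\tfrac12\|u_0\|_{L^2}^2$). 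Remark~\ref{rmk-Phi-1} guarantees that finiteness of the Ginzburg–Landau energy already forces $\theta\in[-1,1]$ a.e.; the Orlicz refinement upgrades this to a \emph{quantitative} separation.

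Next I would derive the closed higher-order energy estimates. The basic energy law for \eqref{NSCH-2} gives, formally,
\[
\frac{d}{dt}\Big(\tfrac12\|u\|_{L^2}^2+\mathcal E(\theta)\Big)+\int_{\mathbb{T}^2}2\nu(\theta)|Du|^2\,dx+\int_{\mathbb{T}^2}|\nabla\mu|^2\,dx=0,
\]
which controls $u\in L^\infty_tL^2_x\cap L^2_t\dot H^1_x$, $\nabla\theta\in L^\infty_tL^2_x$, $\Phi(\theta)\in L^\infty_tL^1_x$, and $\nabla\mu\in L^2_{t,x}$. Once the separation \eqref{separate-condition-2} is in hand, $\phi(\theta)$, $\phi'(\theta)$ and higher derivatives of $\phi$ along $\theta$ are bounded by constants depending on $\delta$; in particular the constraint $|\phi'|\le C_1e^{C_2|\phi|}+C_3$ from \eqref{1-9} becomes harmless. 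Then a bootstrap on $\mu=\phi(\theta)-\Delta\theta$ yields $\theta\in L^2_t H^2_x$ (from $\nabla\mu\in L^2_{t,x}$ and the separation), which feeds $H^2$ and then $H^s$ estimates for $\theta$. For $u$, I would estimate $\|u\|_{H^s}$ using the Navier–Stokes–type structure: the transport term is handled by commutator (Kato–Ponce) estimates, the variable-viscosity term $\dive(2\nu(\theta)Du)$ is elliptic with smooth coefficients once $\theta$ is bounded away from $\pm1$, and the Korteweg forcing $\mu\nabla\theta$ is controlled by the $\theta$-estimates. In two dimensions the critical tool is the logarithmic Sobolev / Brezis–Gallouët inequality, $\|f\|_{L^\infty}\lesssim 1+\|f\|_{H^1}\log^{1/2}(e+\|f\|_{H^s})$ for $s>1$, applied to $\nabla u$ (or $\omega=\curl u$) to absorb the borderline term $\|\nabla u\|_{L^\infty}$ appearing in the $H^s$ energy inequality; this converts the a priori bound into a differential inequality of the form $\frac{d}{dt}Y\le C(1+Y)\log(e+Y)\cdot(\text{integrable factor})$, which by Osgood's lemma precludes blow-up in finite time.

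The main obstacle, and the step on which the whole argument hinges, is the rigorous passage through the approximate second-order equation: one must design the regularization so that (i) its solutions obey a comparison/maximum principle giving pointwise control, (ii) the approximate energy functional still dominates an Orlicz norm strong enough to force a uniform (in the regularization parameter) distance $\delta>0$ from the singular endpoints, and (iii) one can pass to the limit and identify the limit with the genuine convective Cahn–Hilliard solution while preserving \eqref{separate-condition-2}. Everything downstream — the $H^s$ estimates, the variable-viscosity elliptic bounds, the application of the logarithmic Sobolev inequality — is routine only \emph{because} $\theta$ has been separated from $\pm1$ first. I would therefore organize the proof as: (1) local existence and uniqueness of smooth solutions via a fixed-point/Galerkin scheme on \eqref{NSCH-2}; (2) the approximate-parabolic construction plus Orlicz embedding to establish \eqref{separate-condition-2} on the maximal interval; (3) basic and higher-order energy estimates, closed via Kato–Ponce, the variable-coefficient elliptic estimates, and the logarithmic Sobolev inequality; (4) Osgood's lemma to show the $H^s$ norm cannot blow up, hence the maximal time is infinite; (5) standard arguments for continuity in time and for uniqueness of the strong solution. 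The conservation $\int_{\mathbb{T}^2}\theta(t)\,dx=0$ follows immediately by integrating the $\theta$-equation and using $\dive u=0$ together with the periodicity (equivalently, the no-flux condition).
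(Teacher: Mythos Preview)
Your proposal is correct and follows essentially the same strategy as the paper: local well-posedness, a second-order parabolic approximation of the convective Cahn--Hilliard equation combined with a comparison principle and the Orlicz embedding to obtain the strict separation \eqref{separate-condition-2}, followed by $H^s$ energy estimates closed via commutator bounds and the logarithmic Sobolev inequality applied to $\nabla u$, with an Osgood/Gronwall argument to rule out finite-time blow-up. The only places where the paper is more explicit than your outline are the precise role of the Orlicz embedding (it is used to bound $\int_0^t\!\int_{\mathbb{T}^2}|\phi'(\theta)|^p$ via the structural hypothesis $|\phi'|\le C_1e^{C_2|\phi|}+C_3$, which upgrades $\partial_t\theta$ to $L^\infty_tL^2_x$ and hence the comparison-ODE source to $L^\infty_{t,x}$, yielding the uniform separation) and an intermediate $L^2_tH^{2+s_0}$ bound on $\theta$ with $s_0\in(1,\tfrac32]$ needed to control $\|\Delta\theta\|_{L^2_tL^\infty_x}$ before the top-order estimate can be closed.
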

\begin{rmk}\label{rmk-global-1}
It is worth pointing out that H. Abels in \cite{AH1} proved that unique 'strong' solutions globally exist in two dimensions when the initial data $\theta_0 \in H_N^2(\Omega)$, $\mathcal{E}(\theta_0)<+\infty$, $\mu(\theta_0) \in H^1(\Omega)$, and $u_0 \in V_2^2(\Omega)$, where there is no restriction on $\theta_0$ as in \eqref{separate-condition-1}, but also it loses the propagation of the high regularities of the solution because of lack of separating $\theta$ to the singular points of the potential $\phi$. In effect, the solution $(u, \, \theta)$ obtained in \cite{AH1} satisfies
$u \in  L^2(0, \infty; H^{2+s'}(\Omega))\cap H^1(0, \infty; H^{s'}(\Omega)) \cap BUC([0, \infty); H^{1+s-\varepsilon}(\Omega))$ for all $s' \in [0, \frac{1}{2})$ and all $\varepsilon>0$ as well as $\nabla^2\theta, \, \phi(\theta) \in L^{\infty}((0, \infty; L^{r}(\Omega)) )$ for every $1 < r < \infty$.
\end{rmk}
Let's explain the main idea of the proof of Theorem \ref{thm-glo-wp}. Under the assumptions in Theorem \ref{thm-glo-wp}, basic energy estimates yield that the solution $(u, \theta)$ satisfies $\|\theta\|_{L^\infty((0, T)\times \mathbb{T}^2)}\leq  1$, and $u, \, \theta\in L^\infty((0,T); H^1(\mathbb{T}^2))\cap L^2((0,T); H^2(\mathbb{T}^2))$, $u_t, \, \nabla\mu \in L^2((0,T); L^2(\mathbb{T}^2))$. While, from the local well-posedness theory (see Theorem \ref{thm-lp-nsch}), uniqueness and regularity depend strongly on the concentration $\theta$ away from the singular values $\pm 1$ of the function  $\phi$ in the Cahn-Hilliard equation with convection term. Hence, in order to extend the local solution to the global one, we need to prove that not only $(\nabla u, \Delta\theta)$ is bounded in $L^2((0, T); L^{\infty}(\mathbb{T}^2))$, but also $\theta$ is separate from the singular points of $\phi$ for any existence time. Due to lack of the maximum principle for the $\theta$-equation in \eqref{NSCH-2} (as a fourth-order parabolic equation), we construct its approximate second-order parabolic equation, and use comparison principle and the basic energy estimates $u\in L^\infty((0,T); H^1(\mathbb{T}^2))\cap L^2((0,T); H^2(\mathbb{T}^2))$ and $u_t \in L^2((0,T); L^2(\mathbb{T}^2))$ to separate $\theta$ from the singular values of the function  $\phi$, where the Orlicz embedding theorem (Lemma \ref{thm-O-1}) plays a crucial role. From this, the singular problem is reduced to a regular problem. With this in hand, we may readily prove $\theta \in L^2((0,T); H^{2+s_0}(\mathbb{T}^2))$ for any $s_0 \in (1, \frac{3}{2}]$ by using the energy estimate, which bounds $\|\Delta \theta\|_{L^2((0, T); L^{\infty}(\mathbb{T}^2))}$. Another difficulty stems from the Lipschitz estimate of the velocity, since $H^2(\mathbb{T}^2)  \looparrowright Lip(\mathbb{T}^2)$. Fortunately, this can be solved by combining energy estimates with the Logarithmic Sobolev interpolation inequality (Lemma \ref{lem-logarithmic}).

The rest of the paper is organized as follows. We review in Section \ref{sect-2} some preliminary results such as basic calculus in Sobolev spaces, Orlicz embedding theorem, basic properties of the bi-harmonic heat flow, and some properties of the singular free energy density $\phi$. In
Section \ref{sect-3} we present basic energy estimates of the Navier-Stokes-Cahn-Hilliard system \eqref{NSCH-2}. Section \ref{sect-4} is devoted to the local in time well-posedness of the NS-CH system, which proof requires some refined estimates relying on Littlewood-Paley analysis in Appendix A. Based on energy estimates in Section \ref{sect-3}, we obtain uniform $L^\infty$-bounds of concentrations away from singular points of the function $\phi$ in Section \ref{sect-5}. Finally, the global well-posedness of the system \eqref{NSCH-2} is proved in Section \ref{sect-6}.

{\bf Notations:}
Let $A$, $B$ be two operators, we denote $[A, B] =AB-BA$ the commutator between $A$ and $B$. For $a  \lesssim b$, we mean that there is a uniform constant $C$, which may be different on different lines, such that $a \leq C\,b$. We shall denote by $(a, b)$(or $(a, b)_{L^2}$) the $L^2(\mathbb{T}^2)$ inner product of $a$ and $b$. For $X$ a Banach space and $I$ an interval of $\mathbb{R}$, we denote by
$C(I;\,X)$ the set of continuous functions on $I$ with
values in $X,$ and by $C_b(I;\,X)$ the subset of bounded
functions of $C(I;\,X).$ For $q\in[1,+\infty],$ the
notation $L^q(I;\,X)$ stands for the set of measurable functions on
$I$ with values in $X,$ such that $t\longmapsto\|f(t)\|_{X}$ belongs
to $L^q(I).$ For a vector $v =(v_1, v_2) \in X$, we mean that all the components $v_i$ ($i=1, 2$) of $v$ belong to the space $X$.
Moreover, $m(f)=\frac{1}{|\mathbb{T}^2|}\int_{\mathbb{T}^2}f(x)dx$ is the mean value of $f$ on $\mathbb{T}^2$. We always denote the Fourier transform of a function $u$ by $\widehat{u}$ or $\mathcal{F}(u)$.

\renewcommand{\theequation}{\thesection.\arabic{equation}}
\setcounter{equation}{0}
\section{Preliminaries}\label{sect-2}

In this section, we recall some preliminary results that are useful throughout paper.

\subsection{Some calculus in Sobolev spaces}

\begin{lem}[Moser-type estimates, \cite{KM}]\label{lem-3}
Let $s>0$. Then the following two estimates are true:

(i) $\|uv\|_{H^s(\mathbb{T}^d)}\leq C (\|u\|_{L^\infty(\mathbb{T}^d)}\|v\|_{H^s(\mathbb{T}^d)}+\|u\|_{H^s(\mathbb{T}^d)}\|v\|_{L^\infty(\mathbb{T}^d)})$;

(ii) $\|uv\|_{H^s(\mathbb{T}^d)}\leq C\|u\|_{H^s(\mathbb{T}^d)}\|v\|_{H^s(\mathbb{T}^d)} \quad \mbox{for all} \quad s>\frac{d}{2}$;

\noindent where all the constants $C$s are independent of $u$ and $v$.
\end{lem}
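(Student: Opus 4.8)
\medskip

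\noindent\textbf{Proof proposal.} Both bounds are classical, and the plan is to deduce them from the Bony paraproduct decomposition, whose Littlewood--Paley machinery is set up in Appendix A (for integer $s$ one could alternatively combine the Leibniz rule with the Gagliardo--Nirenberg interpolation inequalities and then interpolate in $s$). Denoting by $\{\Delta_j\}_{j}$ and $\{S_j\}_j$ the dyadic blocks on $\mathbb{T}^d$, I would write
\[
uv = T_uv + T_vu + R(u,v),\qquad T_uv=\sum_j S_{j-1}u\,\Delta_j v,\qquad R(u,v)=\sum_{|j-k|\le 1}\Delta_j u\,\Delta_k v,
\]
and estimate each piece separately in $H^s(\mathbb{T}^d)$ (the lowest-frequency contribution, controlled by $\|\Delta_{-1}(uv)\|_{L^2}\le\|u\|_{L^\infty}\|v\|_{L^2}$, is harmless).

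For the paraproduct $T_uv$, each summand $S_{j-1}u\,\Delta_j v$ has Fourier spectrum in an annulus $\{|\xi|\sim 2^j\}$, so the square-function characterization of $H^s$ together with $\|S_{j-1}u\|_{L^\infty}\lesssim\|u\|_{L^\infty}$ gives
\[
\|T_uv\|_{H^s}^2\lesssim \sum_j 2^{2js}\|S_{j-1}u\|_{L^\infty}^2\|\Delta_j v\|_{L^2}^2\lesssim \|u\|_{L^\infty}^2\|v\|_{H^s}^2,
\]
and by symmetry $\|T_vu\|_{H^s}\lesssim\|v\|_{L^\infty}\|u\|_{H^s}$. For the remainder, which is where the hypothesis $s>0$ really enters, I would group $R(u,v)=\sum_q R_q$ with $R_q:=\Delta_q v\sum_{|j-q|\le1}\Delta_j u$; each $R_q$ has spectrum in a \emph{ball} $\{|\xi|\lesssim 2^q\}$ and satisfies $\|R_q\|_{L^2}\lesssim\|u\|_{L^\infty}\|\Delta_q v\|_{L^2}$, so for a fixed dyadic block only the indices $q\ge\ell-N_0$ contribute and
\[
\|\Delta_\ell R(u,v)\|_{L^2}\lesssim \|u\|_{L^\infty}\sum_{q\ge \ell-N_0}\|\Delta_q v\|_{L^2}.
\]
Multiplying by $2^{\ell s}$, taking the $\ell^2$ norm in $\ell$, and applying the discrete Young inequality with the kernel $c_m=2^{ms}\mathbf{1}_{\{m\le N_0\}}$, which lies in $\ell^1$ precisely because $s>0$, yields $\|R(u,v)\|_{H^s}\lesssim\|u\|_{L^\infty}\|v\|_{H^s}$. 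Summing the three contributions gives (i).

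Statement (ii) then follows immediately: for $s>d/2$ the Sobolev embedding on $\mathbb{T}^d$ gives $\|u\|_{L^\infty}\lesssim\|u\|_{H^s}$ and $\|v\|_{L^\infty}\lesssim\|v\|_{H^s}$, and inserting these into (i) closes the estimate. I expect the only genuinely delicate point to be the remainder term $R(u,v)$: unlike the paraproducts it is not spectrally supported away from the origin, so one must sum over the low-frequency tail, and this is exactly where $s>0$ cannot be dispensed with; everything else is routine Littlewood--Paley bookkeeping.
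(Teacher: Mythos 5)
Your argument is correct. Note, however, that the paper does not prove this lemma at all: it is quoted as a classical result with a citation to Klainerman--Majda, so there is no in-text proof to compare against. Your paraproduct route is entirely consistent with the Littlewood--Paley framework the paper sets up in Appendix A (your decomposition $uv=T_uv+T_vu+R(u,v)$ is exactly the Bony decomposition \eqref{bony} recalled there), and each step checks out: the two paraproducts are spectrally localized in annuli and give the $\|u\|_{L^\infty}\|v\|_{H^s}$ and $\|v\|_{L^\infty}\|u\|_{H^s}$ contributions; the remainder is only ball-localized, and your convolution with the kernel $2^{ms}\mathbf{1}_{\{m\le N_0\}}$, summable precisely when $s>0$, is the standard and correct way to close that term. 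The deduction of (ii) from (i) via the embedding $H^s(\mathbb{T}^d)\hookrightarrow L^\infty(\mathbb{T}^d)$ for $s>d/2$ is immediate. The original reference proves the integer-$s$ case by the Leibniz rule and Gagliardo--Nirenberg interpolation (the alternative you mention parenthetically); your version has the advantage of handling all real $s>0$ in one stroke.
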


\begin{lem}[Commutator estimate, \cite{Ka-Pon-88}]\label{lemma-comm}
Let $\Lambda^s:=(1-\Delta)^{\frac{s}{2}}$ with $s>0$. Then the  following estimate holds:
\begin{equation*}
\|[\Lambda^s, u]v\|_{L^2(\mathbb{T}^d)} \leq C(\|u\|_{H^s(\mathbb{T}^d)}\|v\|_{L^\infty(\mathbb{T}^d)}+\|\grad u\|_{L^\infty(\mathbb{T}^d)}\|v\|_{H^{s-1}(\mathbb{T}^d)}),
\end{equation*}
where the constant $C$ is independent of $u$ and $v$.

\end{lem}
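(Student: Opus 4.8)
This is the classical Kato--Ponce commutator estimate, and the plan is to reprove it by a Littlewood--Paley (Bony) decomposition; equivalently one regards $[\Lambda^s,u]v$ as the bilinear Fourier multiplier with symbol $\sigma(\zeta,\eta)=\langle\zeta+\eta\rangle^s-\langle\eta\rangle^s$ (where $\langle\xi\rangle=(1+|\xi|^2)^{1/2}$, $\zeta$ is the frequency of $u$ and $\eta$ that of $v$) and argues through the Coifman--Meyer multiplier theorem. Fix an inhomogeneous dyadic partition of unity, write $u=\sum_j\Delta_j u$ and $v=\sum_j\Delta_j v$, and expand both products in $[\Lambda^s,u]v=\Lambda^s(uv)-u\Lambda^s v$ by Bony's paraproduct formula. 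Since $\Lambda^s$ commutes with $\Delta_j$ and $S_{j-1}$, the subtraction leaves, block by block, a commutator:
\[
[\Lambda^s,u]v=\sum_j[\Lambda^s,S_{j-1}u]\Delta_j v+\sum_j[\Lambda^s,S_{j-1}v]\Delta_j u+\sum_j[\Lambda^s,\Delta_j u]\widetilde{\Delta}_j v=:\mathrm{I}+\mathrm{II}+\mathrm{III},
\]
with $\widetilde{\Delta}_j:=\Delta_{j-1}+\Delta_j+\Delta_{j+1}$: in $\mathrm{I}$ the factor $u$ is at lower frequency than $v$, in $\mathrm{II}$ at higher frequency, and $\mathrm{III}$ is the resonant block where the two frequencies are comparable.

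For $\mathrm{I}$, I would note that the $j$-th summand lives at frequency $\sim2^j$, so $\Lambda^s$ may be replaced there by its frequency-$2^j$ truncation $\tau_j(D)$, whose kernel $k_j=\mathcal{F}^{-1}\tau_j$ satisfies $\||x|\,k_j(x)\|_{L^1}\lesssim\|\mathcal{F}^{-1}(\nabla\tau_j)\|_{L^1}\lesssim2^{j(s-1)}$ by scaling --- equivalently, the mean value estimate $|\langle\xi\rangle^s-\langle\xi-\eta\rangle^s|\lesssim2^{j(s-1)}|\eta|$ on the relevant frequencies. The standard pointwise identity $[\tau_j(D),a]w(x)=\int k_j(x-y)\,(a(y)-a(x))\,w(y)\,dy$ then gives $\|[\Lambda^s,S_{j-1}u]\Delta_j v\|_{L^2}\lesssim2^{j(s-1)}\|\nabla u\|_{L^\infty}\|\Delta_j v\|_{L^2}$, and since these summands have almost disjoint dyadic spectra, $\ell^2$ quasi-orthogonality yields $\|\mathrm{I}\|_{L^2}\lesssim\|\nabla u\|_{L^\infty}\|v\|_{H^{s-1}}$. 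The same localized commutator estimate, applied with $a=S_{j-1}v$ together with the Bernstein bound $\|\nabla S_{j-1}v\|_{L^\infty}\lesssim2^j\|v\|_{L^\infty}$ (the hypothesis $s>0$ may also be used here, through $\|\Lambda^sS_{j-1}v\|_{L^\infty}\lesssim\sum_{k<j}2^{ks}\|v\|_{L^\infty}\lesssim2^{js}\|v\|_{L^\infty}$), gives $\|[\Lambda^s,S_{j-1}v]\Delta_j u\|_{L^2}\lesssim2^{js}\|v\|_{L^\infty}\|\Delta_j u\|_{L^2}$, whence $\|\mathrm{II}\|_{L^2}\lesssim\|v\|_{L^\infty}\|u\|_{H^s}$ by the same orthogonality.

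The main obstacle is $\mathrm{III}$. Here $\Delta_j u$ and $\widetilde{\Delta}_j v$ carry comparable frequencies, so the commutator provides no frequency gain, the summands are only ball-localized in frequency (not annulus-localized), and a term-by-term summation loses a logarithm; moreover neither $\Lambda^sR(u,v)$ nor $R(u,\Lambda^s v)$ is individually bounded by $\|u\|_{H^s}\|v\|_{L^\infty}$, so the internal cancellation of the commutator must be preserved. The clean route is to treat $\mathrm{III}$ as a bilinear Fourier multiplier: after peeling off the harmless contributions involving only $O(1)$ frequencies, one factors out the scalar multiplier $\langle\zeta\rangle^s$ acting on the $u$-slot and verifies that the residual bilinear symbol obeys the Coifman--Meyer derivative bounds --- it is here that $s>0$ is genuinely used. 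The Coifman--Meyer theorem in its $L^\infty\times L^2\to L^2$ form then gives $\|\mathrm{III}\|_{L^2}\lesssim\|\Lambda^s u\|_{L^2}\|v\|_{L^\infty}=\|u\|_{H^s}\|v\|_{L^\infty}$. (A purely Littlewood--Paley treatment of $\mathrm{III}$ is also possible, organizing the sum by output frequency and using the off-diagonal gain $2^{-|j-k|s}$, $s>0$, together with the low-output bilinear estimate, but it ultimately re-derives the same bilinear bounds.) Adding $\mathrm{I}$, $\mathrm{II}$ and $\mathrm{III}$ yields the stated inequality with a constant independent of $u$ and $v$.
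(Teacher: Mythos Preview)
The paper does not supply a proof of this lemma: it is stated in the preliminaries section with a citation to Kato--Ponce and then used as a black box, so there is no argument in the paper to compare against. Your Littlewood--Paley/Coifman--Meyer route is a standard modern proof and the overall strategy is sound.

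One technical point worth tightening: your decomposition $\mathrm{I}+\mathrm{II}+\mathrm{III}$ is not exactly what drops out of Bony's formula applied to $[\Lambda^s,u]v$. Writing $uv=T_uv+T_vu+R(u,v)$ and subtracting $u\Lambda^s v=T_u\Lambda^s v+T_{\Lambda^s v}u+R(u,\Lambda^s v)$, the middle block is $\Lambda^s T_vu-T_{\Lambda^s v}u=\sum_j\bigl(\Lambda^s(S_{j-1}v\,\Delta_j u)-(\Lambda^s S_{j-1}v)\,\Delta_j u\bigr)$, which is \emph{not} the commutator $\sum_j[\Lambda^s,S_{j-1}v]\Delta_j u$ you wrote (the latter would subtract $S_{j-1}v\,\Lambda^s\Delta_j u$ instead). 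In practice this does not matter: for the high--low piece one does not need any commutator structure, since both $\Lambda^s T_vu$ and $T_{\Lambda^s v}u$ are separately bounded in $L^2$ by $\|v\|_{L^\infty}\|u\|_{H^s}$ (frequency localization plus Bernstein, using $s>0$). So your estimate for $\mathrm{II}$ is correct, but the justification should be rewritten as two direct paraproduct bounds rather than a commutator argument. The treatments of $\mathrm{I}$ and $\mathrm{III}$ are fine as sketched.
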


\begin{lem}[Logarithmic Sobolev interpolation inequality, \cite{Brezis1980}]\label{lem-logarithmic}
For any $f\in {H^s}(\mathbb{T}^d)$
with $s> \frac{d}{2}$, there holds
\begin{equation}\label{Log-1}
\|f\|_{L^\infty}\leq C(1+\|f\|_{H^{\frac{d}{2}}})\log^{\frac{1}{2}}(e+\|f\|_{H^s})).
\end{equation}
\end{lem}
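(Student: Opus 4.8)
The plan is to prove \eqref{Log-1} by a Fourier-series (equivalently Littlewood--Paley) splitting on the torus, optimizing a cutoff frequency to manufacture the logarithmic factor; this is the classical Brezis--Gallouet--Wainger scheme adapted to $\mathbb{T}^d$. Since $s>\frac{d}{2}$, the embedding $H^s(\mathbb{T}^d)\hookrightarrow C(\mathbb{T}^d)$ guarantees that the Fourier series of $f$ converges absolutely, so $f$ admits a continuous representative and
\begin{equation*}
\|f\|_{L^\infty}\leq \sum_{k\in\mathbb{Z}^d}|\widehat{f}(k)|.
\end{equation*}
The whole argument reduces to estimating this sum, which I split at a frequency radius $R\geq 1$ to be chosen at the end.

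For the low modes $|k|\leq R$ I would use the $H^{d/2}$ norm. Writing $|\widehat{f}(k)|=(1+|k|^2)^{-\frac d4}(1+|k|^2)^{\frac d4}|\widehat{f}(k)|$ and applying Cauchy--Schwarz gives
\begin{equation*}
\sum_{|k|\leq R}|\widehat{f}(k)|\leq \Big(\sum_{|k|\leq R}(1+|k|^2)^{-\frac d2}\Big)^{\frac12}\,\|f\|_{H^{d/2}}.
\end{equation*}
Comparing the sum to the integral $\int_{|\xi|\leq R}(1+|\xi|^2)^{-d/2}\,d\xi$ and passing to polar coordinates, the radial integrand behaves like $r^{-1}$ for large $r$, so the sum is bounded by $C\log(e+R)$; hence the low-mode contribution is $\leq C(\log(e+R))^{\frac12}\|f\|_{H^{d/2}}$. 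This is precisely where the $\log^{\frac12}$ structure of \eqref{Log-1} originates, and it is available only at the critical smoothness $\frac d2$.

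For the high modes $|k|>R$ I would instead use $H^s$: by the same Cauchy--Schwarz trick with the weight $(1+|k|^2)^{-\frac s2}$,
\begin{equation*}
\sum_{|k|>R}|\widehat{f}(k)|\leq \Big(\sum_{|k|>R}(1+|k|^2)^{-s}\Big)^{\frac12}\,\|f\|_{H^s}\leq C\,R^{\frac d2-s}\,\|f\|_{H^s},
\end{equation*}
where the sum-to-integral comparison converges exactly because $s>\frac d2$, producing the negative power $R^{\frac d2-s}$. Collecting the two bounds yields
\begin{equation*}
\|f\|_{L^\infty}\leq C(\log(e+R))^{\frac12}\,\|f\|_{H^{d/2}}+C\,R^{\frac d2-s}\,\|f\|_{H^s}.
\end{equation*}

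Finally I would optimize in $R$, choosing $R:=(e+\|f\|_{H^s})^{1/(s-\frac d2)}\geq 1$. With this choice the high-mode term satisfies $R^{\frac d2-s}\|f\|_{H^s}=\frac{\|f\|_{H^s}}{e+\|f\|_{H^s}}\leq 1$, while $\log R=\frac{1}{s-\frac d2}\log(e+\|f\|_{H^s})$ and $\log(e+\|f\|_{H^s})\geq 1$ give $\log(e+R)\leq C(s,d)\log(e+\|f\|_{H^s})$. The low-mode term thus becomes $C(s,d)(\log(e+\|f\|_{H^s}))^{\frac12}\|f\|_{H^{d/2}}$, and since $(\log(e+\|f\|_{H^s}))^{\frac12}\geq 1$ the additive constant $1$ is absorbed, yielding \eqref{Log-1} with $C=C(s,d)$. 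The only genuine obstacle is the bookkeeping around this choice of $R$: one must check that $R\geq 1$ (which holds because $e+\|f\|_{H^s}\geq e$) so that the sum-versus-integral comparisons are legitimate, and that the constant stays finite — it may degenerate like $(s-\frac d2)^{-\frac12}$ as $s\downarrow \frac d2$, which is harmless since $C$ is permitted to depend on $s$ and $d$. Everything else is the routine estimate of shell sums by radial integrals.
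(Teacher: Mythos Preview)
Your argument is correct and is exactly the standard Brezis--Gallouet--Wainger frequency-splitting proof. Note, however, that the paper does not give its own proof of this lemma: it simply states the inequality and cites \cite{Brezis1980}. So there is nothing in the paper to compare against; your write-up supplies the (classical) details the authors elected to omit.
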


The action of smooth functions on the space $H^s$ may be stated as follows.

\begin{lem}[\cite{BCD, DR}]\label{lem-4}
Let $I$ be an open interval of $\mathbb{R}$ and $F$ : $I\rightarrow\mathbb{R}$. Let $s>0$ and $\sigma>$ be the smallest integer such that $\sigma>s$.
Assume that $F''$ belongs to $W^{\sigma,\infty}(I;\mathbb{R})$. Let $u,\,v\in H^s(\mathbb{T}^d)\cap L^\infty(\mathbb{T}^d)$ have values in $J\subset I$. There exists a constant $C=C(s,I,J,N)$ such that
\begin{equation}
\|F(u)\|_{H^s}\leq C(1+\|u\|_{L^\infty})^\sigma\|F''\|_{W^{\sigma,\infty}(I)}\|u\|_{H^s},\quad \hbox{if}\quad F(0)=0,\label{2-2}
\end{equation}
and
\begin{align}
&\|F\circ v-F\circ u\|_{H^s}\leq C(1+\|u\|_{L^\infty}+\|v\|_{L^\infty})^\sigma\|F''\|_{W^{\sigma,\infty}(I)}\nonumber
\\
&\qquad\times(\|u-v\|_{H^s}\sup_{\tau\in[0,1]}\|v+\tau(u-v)\|_{L^\infty}+\|u-v\|_{L^\infty}\sup_{\tau\in[0,1]}\|v+\tau(u-v)\|_{H^s}).\label{2-3}
\end{align}
\end{lem}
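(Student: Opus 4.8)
The plan is to prove the linear estimate \eqref{2-2} first, via the standard chain-rule/Littlewood--Paley machinery for composition operators, and then to deduce the difference estimate \eqref{2-3} from it through a mean-value argument combined with the Moser product estimate of Lemma~\ref{lem-3}. The role of the hypothesis $F(0)=0$ is to kill the zeroth-order contribution: it forces $|F(u)|=|F(u)-F(0)|\le\|F'\|_{L^\infty(J)}|u|$ at the level of the $L^2$ (low-frequency) part, so the right-hand side scales linearly in $\|u\|_{H^s}$ rather than acquiring an additive constant, the prefactor being governed by the $W^{\sigma,\infty}(I)$-norm of the derivatives of $F$ restricted to the range $J$. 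I would then split the argument according to whether $s$ is an integer.

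When $\sigma-1=s$ is an integer, I would use $\|f\|_{H^s}^2\sim\sum_{|\beta|\le s}\|\partial^\beta f\|_{L^2}^2$ together with Fa\`a di Bruno's formula, which writes each $\partial^\beta[F(u)]$ with $|\beta|\le s$ as a finite sum of terms $F^{(k)}(u)\prod_i\partial^{\beta_i}u$ with $\sum_i|\beta_i|=|\beta|$ and $1\le k\le|\beta|$. Each factor $\|F^{(k)}(u)\|_{L^\infty}$ is bounded uniformly on $J\subset I$ by the appropriate $W^{\sigma,\infty}(I)$-norm of $F$, while each product is estimated in $L^2$ by the Gagliardo--Nirenberg interpolation inequality, producing $\|u\|_{L^\infty}^{k-1}\|u\|_{H^s}$; collecting the powers of $\|u\|_{L^\infty}$ across the finitely many terms yields the factor $(1+\|u\|_{L^\infty})^\sigma$. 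For non-integer $s$, with $\sigma-1<s<\sigma$, I would instead use the dyadic characterization $\|F(u)\|_{H^s}^2\sim\sum_j 2^{2js}\|\Delta_j F(u)\|_{L^2}^2$ and the telescoping series $F(u)=\sum_{j\ge-1}\bigl(F(S_{j+1}u)-F(S_j u)\bigr)$, where $S_{-1}u=0$ so that $F(0)=0$ makes the series start cleanly. Each increment factors as $F(S_{j+1}u)-F(S_j u)=\Delta_j u\cdot m_j$ with $m_j=\int_0^1 F'(S_ju+t\Delta_j u)\,dt$; here $m_j$ is uniformly bounded in $L^\infty$ and its frequencies are controlled by Bernstein's inequalities and the bounds on the derivatives of $F$, after which regrouping the spectrally localized increments and reassembling the weighted $\ell^2$ sum reproduces the stated constants.

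For \eqref{2-3} I would start from the mean-value identity $F\circ u-F\circ v=(u-v)\,G$ with $G:=\int_0^1 F'\bigl(v+\tau(u-v)\bigr)\,d\tau$, and apply Lemma~\ref{lem-3}(i) to get $\|F\circ u-F\circ v\|_{H^s}\lesssim \|u-v\|_{L^\infty}\,\|G\|_{H^s}+\|u-v\|_{H^s}\,\|G\|_{L^\infty}$. Both norms of $G$ are then estimated by applying the composition bound of the previous step to $F'$ evaluated along the segment $v+\tau(u-v)$: since the building block of this composition is $F''$, the governing factor is $\|F''\|_{W^{\sigma,\infty}(I)}$, and the chain rule introduces $\sup_{\tau\in[0,1]}\|v+\tau(u-v)\|_{L^\infty}$ into $\|G\|_{L^\infty}$ and both $\sup_{\tau\in[0,1]}\|v+\tau(u-v)\|_{H^s}$ and the power $(1+\|u\|_{L^\infty}+\|v\|_{L^\infty})^\sigma$ into $\|G\|_{H^s}$. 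The first term of Lemma~\ref{lem-3}(i) then reproduces the second summand of \eqref{2-3} and the second term reproduces the first summand.

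The main obstacle is the non-integer case: making the telescoping/Littlewood--Paley estimate rigorous while simultaneously guaranteeing convergence of the series in $H^s$ and retaining the precise polynomial dependence $(1+\|u\|_{L^\infty})^\sigma$ and the explicit norm $\|F''\|_{W^{\sigma,\infty}(I)}$, rather than an unquantified constant $C(\|u\|_{L^\infty})$. Keeping the chain-rule factors $F^{(k)}(u)$ uniformly controlled on the range $J\subset I$ and propagating these constants through the dyadic blocks is where the care lies; the remaining steps are routine applications of interpolation and the product rule of Lemma~\ref{lem-3}.
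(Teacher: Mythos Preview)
The paper does not give its own proof of this lemma: it is stated as a known result with a citation to \cite{BCD, DR}, and no argument is supplied in the text. There is therefore no in-paper proof to compare your proposal against.

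That said, your sketch is essentially the standard argument one finds in those references. The telescoping decomposition $F(u)=\sum_j\bigl(F(S_{j+1}u)-F(S_ju)\bigr)$ together with the first-order Taylor expansion of each increment is exactly the Bony--Meyer paralinearization device used in \cite{BCD} (Theorem~2.87 and its corollaries), and your treatment of the integer case via Fa\`a di Bruno plus Gagliardo--Nirenberg is the classical route. The reduction of \eqref{2-3} to \eqref{2-2} by writing $F(u)-F(v)=(u-v)\int_0^1 F'(v+\tau(u-v))\,d\tau$ and invoking the Moser product estimate is also the standard argument. One minor point: in the telescoping step you should keep track not just of the $L^\infty$ bound on $m_j$ but also of the $H^s$ regularity of the ``paraproduct remainder'' arising when you localize $\Delta_q$ on $\Delta_j u\cdot m_j$; this is where the higher derivatives $F^{(k)}$ with $k\le\sigma+1$ enter and is the place where the precise power $(1+\|u\|_{L^\infty})^\sigma$ is generated. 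You have flagged this as the main obstacle, which is accurate; the details are in \cite[Chapter~2]{BCD}.
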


\subsection{Orlicz embedding theorem}
In this subsection, we recall the definitions of Orlicz spaces and classes (see \cite{TNS}). Let $\phi(t)$ be a real-valued continuous, convex, even function of the real variable $t$, satisfying
\begin{equation*}\label{2-40}
\lim_{t\rightarrow0}\frac{\phi(t)}{t}=0,\quad\lim_{t\rightarrow\infty}\frac{\phi(t)}{t}=\infty.
\end{equation*}
Then the Orlicz class $L_{\phi}(\Omega)$ is defined as follows
\begin{equation*}\label{2-40-1}
L_{\phi}(\Omega):=\bigg\{u(x) \, \mbox{is measurable in}\, \Omega|\int_\Omega\phi(u(x))dx<\infty\bigg\}.
\end{equation*}
The Orlicz space $L_{\phi^*}(\Omega)$ may be defined as the linear hull of  $L_{\phi}(\Omega)$ together with the Luxembourg norm
\begin{equation*}\label{2-41}
\|u\|_{ L_{\phi^*}(\Omega)}:=\inf\{k>0;\,\int_\Omega \phi(k^{-1} u(k))\,dx\leq 1\}.
\end{equation*}
 $L_{\phi^*}(\Omega)$ is a Banach space under \eqref{2-41}. We call $\phi(t)$ a defining function for  $L_{\phi^*}(\Omega)$.

 If for any two defining functions $\phi(t)$, $\psi(s)$, we have for every $\lambda>0$
 \begin{equation*}
 \lim_{t\rightarrow\infty}\frac{\phi(\lambda t)}{\psi(t)}=\infty,
 \end{equation*}
then we write $\psi\prec\phi$. Note that this means that
$L_{\phi^*}(\Omega)\varsubsetneqq  L_{\psi^*}(\Omega)$.

If a sequence $\{u_n(x)\}_{n \in \mathbb{N}}\subset L_{\phi^*}(\Omega)$ converges in measure and is bounded in $ L_{\phi^*}(\Omega)$,
then $u_n(x)$ converges in  $L_{\psi^*}(\Omega)$ for any $\psi\prec\phi$.

A sequence of functions $\{u_n(x)\}_{n \in \mathbb{N}}\subset L_{\phi}(\Omega)$ is said to be mean convergent to $u(x)\in L_{\phi^*}(\Omega)$ if
\begin{equation*}
\int_\Omega\phi(u_n-u)dx\rightarrow0\quad\hbox{as}\quad n\rightarrow\infty.\label{2-42}
\end{equation*}
Mean convergence is a weaker property than norm convergence although for a large class of Orlicz spaces which includes the $L_p$ spaces, $p>1$,
the two notions are equivalent.

\begin{prop}[Young's inequality, \cite{AA}]\label{prop-O-Y}
\begin{equation}\label{2-4}
p\cdot q\leq A(p)+\tilde{A}(q), \quad\forall~p,q\geq0,
\end{equation}
where
\begin{equation}
A(s):=e^s-s-1, \quad\tilde{A}(s):=(1+s)\ln(1+s)-s.\label{2-5}
\end{equation}
\end{prop}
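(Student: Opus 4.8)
The plan is to recognize that $A$ and $\tilde A$ from \eqref{2-5} form a pair of complementary Young functions (Legendre--Fenchel conjugates), for which \eqref{2-4} is the classical Young inequality. First I would compute $A'(s) = e^{s}-1 =: a(s)$ and note that $a$ is continuous and strictly increasing on $[0,\infty)$ with $a(0)=0$ and $a(s)\to\infty$ as $s\to\infty$, so it admits an inverse $b(t)=\ln(1+t)$ on $[0,\infty)$. Then I would verify, by the one-line antiderivative computation
\[
\int_0^t \ln(1+\tau)\,d\tau = \big[(1+\tau)\ln(1+\tau)-(1+\tau)\big]_0^t = (1+t)\ln(1+t)-t,
\]
that $\tilde A(t)=\int_0^t b(\tau)\,d\tau$, confirming that $A$ and $\tilde A$ are indeed conjugate and, in particular, that both vanish at $0$.

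With this identification, I would fix $q\ge 0$ and set $g(p):=A(p)+\tilde A(q)-pq$ for $p\ge 0$. Differentiating gives $g'(p)=e^{p}-1-q$, which is negative for $p<\ln(1+q)$ and positive for $p>\ln(1+q)$; hence $g$ attains its minimum over $[0,\infty)$ at $p_\star:=\ln(1+q)$. Substituting $e^{p_\star}=1+q$ one gets $A(p_\star)=q-\ln(1+q)$, and then
\[
g(p_\star)= \big(q-\ln(1+q)\big) + \big((1+q)\ln(1+q)-q\big) - q\ln(1+q) = 0,
\]
so $g(p)\ge 0$ for all $p\ge 0$, which is exactly \eqref{2-4}. (Equality holds precisely when $q=e^{p}-1$.)

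Alternatively, and perhaps more transparently, I could give the purely geometric argument: in the $(\sigma,\tau)$-plane the rectangle $[0,p]\times[0,q]$ is contained in the union of the region under the graph $\tau=e^{\sigma}-1$ and the region to its left, whose areas are $\int_0^p(e^{\sigma}-1)\,d\sigma = A(p)$ and $\int_0^q\ln(1+\tau)\,d\tau=\tilde A(q)$, respectively, whence $pq\le A(p)+\tilde A(q)$. There is no genuine obstacle here; the only point requiring care is checking that the antiderivatives are normalized so as to reproduce exactly the forms in \eqref{2-5}, which is the computation already displayed above.
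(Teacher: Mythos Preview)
Your proof is correct and complete. Note, however, that the paper does not actually prove this proposition: it is stated with a citation to Adams~\cite{AA} and no argument is given, so there is no ``paper's own proof'' to compare against. Your approach---verifying that $A$ and $\tilde A$ are Legendre--Fenchel conjugates via $A'(s)=e^s-1$ with inverse $\ln(1+t)$, and then either minimizing $g(p)=A(p)+\tilde A(q)-pq$ directly or invoking the geometric rectangle-covering picture---is the standard route and is carried out cleanly. The computation of $g(p_\star)=0$ is correct, and the equality case $q=e^p-1$ is a nice bonus the paper does not mention.
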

Note that $\forall\, s\geq0$
\begin{align}
\tilde{A}(s)=&(1+s)\ln(1+s)-s=s\ln(1+s)+(\ln(1+s)-s)\leq s\ln(1+s).\label{2-6}
\end{align}

\begin{lem}[Orlicz embedding theorem, \cite{TNS}]
\label{thm-O-Em}
Let $\Omega$ satisfy a cone condition. Then the Sobolev space $W^{k, p}(\Omega)$, where $n=kp$, $k\geq 0$ is a positive integer, $p \in (1, +\infty)$, may be continuously imbedded in
Orlicz space $L_{\phi*}(\Omega)$ where
\begin{equation*}
\phi(t)=e^{|t|^{n/(n-1)}}-1.
\end{equation*}
Furthermore $W^{k, p}(\Omega)$ may be continuously imbedded in the sense of mean convergence in any Orlicz class $L_{\psi}(\Omega)$ where $\psi(t)\leq\phi(\lambda t)$ for some  $\lambda>0$. The imbedding
into $L_{\psi*}(\Omega)$ for any $\psi\prec\phi$ is compact.
\end{lem}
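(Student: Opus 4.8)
This is a Trudinger--Moser type embedding, and the plan is to establish the (formally stronger) version with defining function $\phi(t)=e^{|t|^{p'}}-1$, where $p'=p/(p-1)=n/(n-k)$ is the conjugate exponent of $p=n/k$; since $n/(n-1)\le p'$ for $k\ge1$ --- with equality precisely when $k=1$, the case used later in the paper --- this yields the stated result via the elementary continuous inclusion $L_{(e^{|\cdot|^{p'}}-1)^*}(\Omega)\hookrightarrow L_{(e^{|\cdot|^{n/(n-1)}}-1)^*}(\Omega)$. The starting point is the Sobolev integral representation, which is exactly where the cone condition is used: for $u\in W^{k,p}(\Omega)$ one has, for a.e.\ $x\in\Omega$, the pointwise bound $|u(x)|\le C_0(n,k,\Omega)\,(I_k\ast g)(x)$, where $g:=\sum_{|\alpha|\le k}|D^\alpha u|$ (so that $\|g\|_{L^p(\Omega)}\le C_1\|u\|_{W^{k,p}(\Omega)}$), $I_k(z):=|z|^{k-n}\car_{\{|z|\le D\}}$, and $D:=\operatorname{diam}\Omega$; here the bounded kernels attached to the lower-order derivatives are absorbed using $|z|^{k-n}\ge D^{k-n}$ on $\{|z|\le D\}$. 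It therefore suffices to control $J:=I_k\ast g$.

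The core estimate is a quantitative $L^q(\Omega)$ bound for $J$ with the correct growth in $q$. The point is that $I_k\notin L^{p'}$ --- which is exactly why $W^{k,p}$ does not embed into $L^\infty$ in this critical regime --- but $I_k\in L^r$ for every $r<p'$, and a direct polar-coordinate computation gives $\|I_k\|_{L^r}\le C(n,k)\,D^{n/q}\,q^{1/p'}$ whenever $\tfrac1r=\tfrac1{p'}+\tfrac1q$ and $q$ is large. Young's convolution inequality then yields $\|J\|_{L^q(\Omega)}\le\|I_k\|_{L^r}\,\|g\|_{L^p(\Omega)}\le C(n,k,\Omega)\,q^{1/p'}\,\|u\|_{W^{k,p}(\Omega)}$ for all $q\ge q_0$, while for finite $q<q_0$ one still has $J\in L^q(\Omega)$ (Young's inequality with $I_k\in L^1$ together with boundedness of $\Omega$). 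Extracting the \emph{precise} polynomial rate $q^{1/p'}$, rather than mere finiteness, is the crux of the whole argument.

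Given this, fix $M>0$, expand $e^{s^{p'}}-1=\sum_{m\ge1}s^{mp'}/m!$, use the pointwise bound $|u(x)|\le C_0(I_k\ast g)(x)$, and integrate term by term:
\begin{equation*}
\int_\Omega\Big(e^{(|u(x)|/M)^{p'}}-1\Big)\,dx\ \le\ \sum_{m\ge1}\frac{1}{m!}\Big(\frac{C_0}{M}\Big)^{mp'}\|J\|_{L^{mp'}(\Omega)}^{mp'}\ \le\ \sum_{m\ge1}\frac{(mp')^{m}}{m!}\Big(\frac{C_0C\,\|u\|_{W^{k,p}(\Omega)}}{M}\Big)^{mp'},
\end{equation*}
where the second inequality inserts the $L^{mp'}$ bound just obtained (each summand is finite, including the finitely many with $mp'<q_0$). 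Stirling's inequality $m!\ge(m/e)^{m}$ bounds the general summand by $\big[\,p'e\,\big(C_0C\|u\|_{W^{k,p}(\Omega)}/M\big)^{p'}\,\big]^{m}$, so the series is geometric and $\le1$ as soon as $M=C''(n,k,\Omega)\,\|u\|_{W^{k,p}(\Omega)}$ is chosen large enough. By the definition of the Luxemburg norm this says precisely $\|u\|_{L_{\phi^*}(\Omega)}\le C''\|u\|_{W^{k,p}(\Omega)}$, which is the asserted continuous embedding.

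The remaining two assertions follow formally. If $\psi(t)\le\phi(\lambda t)$ for some $\lambda>0$ and $u_n\to u$ in $W^{k,p}(\Omega)$, put $\varepsilon_n:=\|u_n-u\|_{L_{\phi^*}(\Omega)}\to0$; convexity of $\phi$ with $\phi(0)=0$ gives, for $n$ large, $\int_\Omega\psi(u_n-u)\,dx\le\int_\Omega\phi\big(\lambda\varepsilon_n\cdot\varepsilon_n^{-1}(u_n-u)\big)\,dx\le\lambda\varepsilon_n\int_\Omega\phi\big(\varepsilon_n^{-1}(u_n-u)\big)\,dx\le\lambda\varepsilon_n\to0$, i.e.\ mean convergence into $L_\psi(\Omega)$. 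For compactness, a bounded sequence in $W^{k,p}(\Omega)$ is bounded in $L_{\phi^*}(\Omega)$ by the continuous embedding and, by Rellich--Kondrachov (in this critical case $W^{k,p}(\Omega)\hookrightarrow\hookrightarrow L^q(\Omega)$ for every finite $q$), has a subsequence converging in measure; the Orlicz-space fact recalled above --- a sequence bounded in $L_{\phi^*}$ that converges in measure converges in $L_{\psi^*}$ for every $\psi\prec\phi$ --- then upgrades this to norm convergence in $L_{\psi^*}(\Omega)$, proving compactness of $W^{k,p}(\Omega)\hookrightarrow L_{\psi^*}(\Omega)$. The one genuinely delicate ingredient is the pairing of the $L^q$-estimate of $I_k\ast g$ with the summation of the exponential series: one must verify that the constant in Young's inequality degrades only polynomially, at rate $q^{1/p'}$, as $r\uparrow p'$, since this rate is exactly what is consumed by the factorial $m!$ through Stirling; any coarser bound for $\|I_k\ast g\|_{L^q}$ destroys the convergence and yields only a weaker, sub-exponential integrability.
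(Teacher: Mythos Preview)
The paper does not supply its own proof of this lemma: it is quoted verbatim from Trudinger \cite{TNS} and used as a black box (in fact only the $k=1$, $n=2$ consequence recorded immediately afterward as Lemma~\ref{thm-O-1} is ever invoked). So there is no ``paper's proof'' to compare against; what you have written is a self-contained argument where the authors simply cite the literature.

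That said, your argument is correct and is essentially Trudinger's original proof: the potential representation $|u|\lesssim I_k\ast g$ available under the cone condition, the computation $\|I_k\|_{L^r}\lesssim q^{1/p'}$ for $1/r=1/p'+1/q$, Young's convolution inequality to get $\|I_k\ast g\|_{L^q}\lesssim q^{1/p'}\|u\|_{W^{k,p}}$, and then the Taylor expansion of the exponential combined with Stirling to sum the resulting geometric series. The treatment of mean convergence via convexity of $\phi$ and of compactness via Rellich--Kondrachov plus the ``bounded in $L_{\phi^*}$ and convergent in measure $\Rightarrow$ convergent in $L_{\psi^*}$ for $\psi\prec\phi$'' principle (which the paper states just above the lemma) is also the standard route. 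You have moreover spotted that the exponent $n/(n-1)$ in the paper's statement is the sharp one only when $k=1$, while for general $k$ the optimal exponent is $p'=n/(n-k)\ge n/(n-1)$; since the paper only uses $k=1$, $n=2$ this discrepancy is harmless there, and your stronger version implies the stated one by the trivial inclusion of Orlicz spaces.
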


\begin{rmk}
\label{rmk-1}
In this paper, we will use the Orlicz embedding Theorem in 2-D periodic domain from Theorem \ref{thm-O-Em}, which is contained in the following version of the   Orlicz embedding theorem.
\end{rmk}
\begin{lem}[Orlicz embedding theorem, \cite{TNS}]
\label{thm-O-1}
Let $\Omega\subset\mathbb{R}^2$ is a bounded domain satisfying a cone condition, $\beta>0$. Then
\begin{equation*}\label{2-7}
\int_\Omega e^{\beta|v(x)|}\,dx\leq C_{\beta, \Omega} e^{C_{\beta, \Omega}\|v\|^2_{H^1(\Omega)}}, \quad\forall~v\in H^1(\Omega),
\end{equation*}
where $C_{\beta, \Omega}$ depends only on $\beta$ and $|\Omega|$.
\end{lem}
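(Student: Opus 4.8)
The plan is to deduce the inequality $\int_\Omega e^{\beta|v(x)|}\,dx\le C_{\beta,\Omega}e^{C_{\beta,\Omega}\|v\|_{H^1(\Omega)}^2}$ from the $W^{1,2}$ case of the Orlicz embedding theorem (Lemma \ref{thm-O-Em} with $k=1$, $p=2$, $n=2$), combined with the Young-type inequality in Proposition \ref{prop-O-Y}. Note that for $n=2$ the defining function is $\phi(t)=e^{|t|^{2}}-1$, so Lemma \ref{thm-O-Em} gives a continuous embedding $H^1(\Omega)\hookrightarrow L_{\phi^*}(\Omega)$; that is, there is a constant $K=K_\Omega$ with $\|v\|_{L_{\phi^*}(\Omega)}\le K\|v\|_{H^1(\Omega)}$ for all $v\in H^1(\Omega)$.

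First I would recall what membership in $L_{\phi^*}$ with controlled Luxembourg norm means concretely: by the definition of the norm, for any $k>\|v\|_{L_{\phi^*}(\Omega)}$ one has $\int_\Omega\phi(k^{-1}v)\,dx\le 1$, i.e.
\begin{equation*}
\int_\Omega\Big(e^{|v(x)|^2/k^2}-1\Big)\,dx\le 1,
\qquad\text{hence}\qquad
\int_\Omega e^{|v(x)|^2/k^2}\,dx\le 1+|\Omega|.
\end{equation*}
Taking $k=2K\|v\|_{H^1(\Omega)}$ (any fixed multiple $>K\|v\|_{H^1}$ works; I throw in the factor $2$ to stay strictly above the infimum, and handle $v\equiv0$ trivially) we obtain
\begin{equation*}
\int_\Omega e^{|v(x)|^2/(4K^2\|v\|_{H^1(\Omega)}^2)}\,dx\le 1+|\Omega|=:C_\Omega.
\end{equation*}
So the exponential-of-square integrability is in hand with an explicit constant; the remaining task is purely to convert $e^{c|v|^2}$-control into $e^{\beta|v|}$-control.

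For that conversion I would use the elementary pointwise bound $\beta t\le \varepsilon t^2+\frac{\beta^2}{4\varepsilon}$ for all $t\ge0$ (completing the square), valid for any $\varepsilon>0$. Applying it with $t=|v(x)|$ and $\varepsilon=1/(4K^2\|v\|_{H^1(\Omega)}^2)$ gives
\begin{equation*}
\beta|v(x)|\le \frac{|v(x)|^2}{4K^2\|v\|_{H^1(\Omega)}^2}+\beta^2 K^2\|v\|_{H^1(\Omega)}^2,
\end{equation*}
whence, exponentiating and integrating and using the displayed bound above,
\begin{equation*}
\int_\Omega e^{\beta|v(x)|}\,dx
\le e^{\beta^2K^2\|v\|_{H^1(\Omega)}^2}\int_\Omega e^{|v(x)|^2/(4K^2\|v\|_{H^1(\Omega)}^2)}\,dx
\le C_\Omega\, e^{\beta^2K^2\|v\|_{H^1(\Omega)}^2}.
\end{equation*}
Setting $C_{\beta,\Omega}:=\max\{C_\Omega,\beta^2K_\Omega^2\}$ (and noting $K_\Omega$ depends only on $|\Omega|$ for a fixed cone condition) yields the claimed form. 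The case $v\equiv0$ is immediate since the left side equals $|\Omega|\le C_{\beta,\Omega}$, and the general case follows by the above once $\|v\|_{H^1}>0$.

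I do not expect a serious obstacle here: the only subtlety is bookkeeping around the Luxembourg-norm infimum (one must take $k$ strictly larger than $\|v\|_{L_{\phi^*}}$, or invoke the standard fact that the infimum defining the Luxembourg norm is attained when the modular is finite, so that $\int_\Omega\phi(v/\|v\|_{L_{\phi^*}})\,dx\le1$ directly). Alternatively, one can bypass the Luxembourg norm entirely and argue through the "mean convergence" clause of Lemma \ref{thm-O-Em}: $H^1(\Omega)$ embeds into the Orlicz class $L_\psi(\Omega)$ for $\psi(t)=e^{\lambda^2 t^2}-1$ with $\lambda$ depending on the scaling, which after the same completing-the-square step gives the result; but the Luxembourg-norm route above is cleaner. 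The use of Proposition \ref{prop-O-Y} is optional — the inequality $\beta t\le\varepsilon t^2+\beta^2/(4\varepsilon)$ is all that is needed — though one could instead invoke \eqref{2-4}--\eqref{2-5} directly to pass between $A(s)=e^s-s-1$ and its conjugate if one preferred to phrase everything in Orlicz-conjugate language.
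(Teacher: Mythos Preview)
Your derivation is correct and self-contained: the Luxembourg-norm bound from Lemma~\ref{thm-O-Em} with $k=1$, $p=2$, $n=2$ gives $\int_\Omega e^{|v|^2/k^2}\,dx\le 1+|\Omega|$ for $k>K\|v\|_{H^1}$, and the completing-the-square step $\beta t\le\varepsilon t^2+\beta^2/(4\varepsilon)$ converts this to the stated exponential-of-linear bound. The bookkeeping around the Luxembourg infimum and the trivial case $v\equiv0$ are handled properly.

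However, there is nothing to compare against: the paper does not prove Lemma~\ref{thm-O-1}. Both Lemma~\ref{thm-O-Em} and Lemma~\ref{thm-O-1} are quoted directly from Trudinger~\cite{TNS} without argument; Remark~\ref{rmk-1} simply flags that the 2-D version in Lemma~\ref{thm-O-1} is the form actually used. Your proposal therefore supplies an argument the paper omits, deriving the specialized statement from the general one already cited. This is a perfectly reasonable addition, and indeed the route you take (Trudinger--Moser control of $e^{c|v|^2}$ plus Young/completing the square) is the standard way this inequality is obtained.

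One minor caveat: the claim that $K_\Omega$ ``depends only on $|\Omega|$ for a fixed cone condition'' is exactly right as stated, but the lemma as written asserts dependence only on $\beta$ and $|\Omega|$, tacitly absorbing the cone parameters. Your proof inherits that same (harmless) imprecision from the statement.
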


\subsection{Basic properties of the bi-harmonic heat flow}

Let's now recall some fundamental properties of the bi-harmonic heat flow on periodic domains.

Consider the solution $\theta(t, x)$ to the bi-harmonic heat equation:
\begin{equation*}\label{bi-harm-fund}
\begin{cases}
 & (\partial_t +\Delta^2) \theta=0,\quad \forall \, (t, x) \in \mathbb{R}^+\times\mathbb{T}^2,\\
 &\theta|_{t=0}=\theta_0,
\end{cases}
\end{equation*}
where initial data $\theta_0 \in H^s(\mathbb{T}^2)$ with $s > 1$ and $\int_{\mathbb{T}^2} \theta_0(x)\, dx=0$.
Then we have
\begin{equation*}\label{bi-harm-fund-1}
\begin{split}
&\widehat\theta(t, n)=e^{-t|n|^4}\widehat\theta_0(n), \quad\forall \,\, n=(n_1, n_2) \in \mathbb{Z}^2,
\end{split}
\end{equation*}
where $\widehat{f}(n):=(2\pi)^{-2}\int_{\mathbb{T}^2} f(x) e^{-i\,n\cdot x}\, dx$ for $\forall\, f \in L^1(\mathbb{T}^2)$, which implies that
\begin{equation}\label{bi-harm-fund-2}
\begin{split}
&\theta(t, x)=e^{-t\Delta^2} \theta_0(x)=\sum_{n \in \mathbb{Z}^2}e^{-t|n|^4}\widehat\theta_0(n)e^{i\, n\cdot x}, \quad\forall \,\, x \in \mathbb{T}^2,
\end{split}
\end{equation}
and $\int_{\mathbb{T}^2} \theta(t, x)\, dx=0$ for any $t>0$.

Moreover, we claim that
\begin{equation}\label{bi-harm-fund-3}
\|\theta(t, \cdot)-\theta_0(\cdot)\|_{L^{\infty}(\mathbb{T}^2)} \rightarrow 0\quad (\mbox{as} \quad t\rightarrow 0^+).
\end{equation}
In effect, since $\theta_0 \in H^s(\mathbb{T}^2)$ with $s>1$, we find $\theta_0(x)=\sum_{n \in \mathbb{Z}^2}\widehat\theta_0(n)e^{i\, n\cdot x}$ for any $x \in \mathbb{T}^2$, which follows from \eqref{bi-harm-fund-2} that
\begin{equation*}\label{bi-harm-fund-4}
\begin{split}
&\theta(t, x)-\theta_0(x)=\sum_{n \in \mathbb{Z}^2}(e^{-t|n|^4}-1)\widehat\theta_0(n)e^{i\, n\cdot x}, \quad\forall \,\, x \in \mathbb{T}^2.
\end{split}
\end{equation*}
From this, we find that for any $t>0$, $x \in \mathbb{T}^2$
\begin{equation}\label{bi-harm-fund-5}
\begin{split}
&|\theta(t, x)-\theta_0(x)|\leq \sum_{n \in \mathbb{Z}^2}|e^{-t|n|^4}-1||\widehat\theta_0(n)|.
\end{split}
\end{equation}
Since
\begin{equation*}\label{bi-harm-fund-6}
\begin{split}
\sum_{n \in \mathbb{Z}^2}|e^{-t|n|^4}-1||\widehat\theta_0(n)|\leq \sum_{n \in \mathbb{Z}^2, \, |n| \geq 1}|\widehat\theta_0(n)| \leq C_s\|\theta_0\|_{H^s},
\end{split}
\end{equation*}
where we have used the fact that $s>1$, Lebesgue's dominated convergence theorem ensures that
\begin{equation*}\label{bi-harm-fund-7}
\begin{split}
\lim_{t\rightarrow 0^+}\sum_{n \in \mathbb{Z}^2}|e^{-t|n|^4}-1||\widehat\theta_0(n)|=0,
\end{split}
\end{equation*}
which along with \eqref{bi-harm-fund-5} yields \eqref{bi-harm-fund-3}.
\begin{rmk}\label{rmk-bound-sep}
According to \eqref{bi-harm-fund-3}, we know that, if initial data $\theta_0 \in H^s(\mathbb{T}^2)$ with $s > 1$ and $\int_{\mathbb{T}^2} \theta_0(x)\, dx=0$, then for any given $\delta_0 \in (0, 1)$, there is a positive time $T_1$ such that, for any $t\in [0, T_1]$, there holds that
  \begin{equation}\label{bi-harm-fund-10}
\|e^{-t\Delta^2}\theta_0\|_{L^{\infty}(\mathbb{T}^2)} \leq \|\theta_0\|_{L^{\infty}(\mathbb{T}^2)}+\frac{1}{4}\delta_0.
\end{equation}
\end{rmk}

\subsection{Some properties of the singular free energy density $\phi$}

In this subsection, we record some elementary properties in terms of the singular function $\phi$, which also appears in \cite{MiZe2004}. We will give them rigorous proofs for the sake of completeness.

\begin{prop}\label{prop-2}
Given the function $\phi=\Phi'$ with \eqref{1-9}, $\theta \in L^\infty(\mathbb{T}^2)$ with $\|\theta\|_{L^\infty(\mathbb{T}^2)}\leq 1$,
$\phi(\theta)\in L^1(\mathbb{T}^2)$, and $f\in L^1(\mathbb{T}^2)$ satisfy
\begin{equation}\label{2-8}
m(\theta)=0, \quad \phi(\theta)-m(\phi(\theta))=f \quad (\forall\, x\in\mathbb{T}^2).
\end{equation}
Then there holds
\begin{equation}\label{2-9}
|m(\phi(\theta))|\leq C(\|f\|_{L^1}+1)
\end{equation}
for some positive constant $C$ depending only on $\alpha$ in \eqref{1-9}.
\end{prop}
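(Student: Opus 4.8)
The plan is to extract information about the mean value $m(\phi(\theta))$ by testing the relation $\phi(\theta)-m(\phi(\theta))=f$ against a cleverly chosen function that "sees" the sign of $\phi$. The key structural fact about $\phi$ from \eqref{1-9} is that $\phi$ is monotone up to a linear perturbation: the condition $\phi'(s)\ge-\alpha$ means $s\mapsto\phi(s)+\alpha s$ is nondecreasing on $(-1,1)$, and since $\phi\to-\infty$ at $-1^+$ and $\phi\to+\infty$ at $1^-$, there is a unique point $s_0\in(-1,1)$ with $\phi(s_0)=0$ (in the symmetric case of Example \ref{example-1} one has $s_0=0$, consistent with $m(\theta)=0$). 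The crucial monotonicity consequence is a lower bound of the form $(\phi(s)-\phi(s_0))(s-s_0)\ge -\alpha(s-s_0)^2\ge-\alpha'$ for $s\in[-1,1]$, and more usefully the pointwise inequality $\phi(s)\,(s-s_0)\ge |\phi(s)|\,\cdot\,(\text{something positive away from }\pm1)$ up to an error controlled by $\alpha$.

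First I would integrate the identity $\phi(\theta)-m(\phi(\theta))=f$ against $(\theta-s_0)$ over $\mathbb{T}^2$. Using $\int_{\mathbb{T}^2}(\theta-s_0)\,dx = |\mathbb{T}^2|(m(\theta)-s_0)=-|\mathbb{T}^2|s_0$, this gives
\begin{equation*}
\int_{\mathbb{T}^2}\phi(\theta)(\theta-s_0)\,dx = \int_{\mathbb{T}^2} f(\theta-s_0)\,dx - |\mathbb{T}^2|\,s_0\,m(\phi(\theta)).
\end{equation*}
Since $\|\theta\|_{L^\infty}\le1$, the right-hand side is bounded by $2\|f\|_{L^1} + |\mathbb{T}^2|\,|s_0|\,|m(\phi(\theta))|$. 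The left-hand side, by the monotonicity inequality above, is bounded below by $c\int_{\mathbb{T}^2}|\phi(\theta)|\,dx - C$ for suitable constants depending only on $\alpha$ (one splits the domain according to whether $\theta$ is close to $s_0$ or not; near $s_0$, $\phi(\theta)$ is bounded, while away from $s_0$ the sign of $(\phi(s)-\phi(s_0))(s-s_0)$ is eventually positive and dominates $|\phi(s)|\cdot|s-s_0|$ modulo the $\alpha$-error). Hence $\int_{\mathbb{T}^2}|\phi(\theta)|\,dx\le C(\|f\|_{L^1}+1+|s_0|\,|m(\phi(\theta))|)$. Finally, $|m(\phi(\theta))|\le |\mathbb{T}^2|^{-1}\int_{\mathbb{T}^2}|\phi(\theta)|\,dx$, and combining with the triangle inequality $\|\phi(\theta)\|_{L^1}\le\|f\|_{L^1}+|\mathbb{T}^2||m(\phi(\theta))|$ from \eqref{2-8} — actually it is cleaner to note $|m(\phi(\theta))|\le \|f\|_{L^1}/|\mathbb{T}^2| + $ trivial bound is circular, so instead I absorb: if $|s_0|<1$ is a fixed constant the term $|s_0||m(\phi(\theta))|$ can be handled because we get $\int|\phi(\theta)| \le C\|f\|_{L^1}+C + C|s_0| |m(\phi(\theta))|$ and $|m(\phi(\theta))|\le|\mathbb{T}^2|^{-1}\int|\phi(\theta)|$, yielding $\int|\phi(\theta)|(1-C|s_0||\mathbb{T}^2|^{-1})\le\ldots$ — this only closes if $|s_0|$ is small.

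The honest resolution of the last point, and the step I expect to be the main obstacle, is that one should not use $(\theta-s_0)$ but rather a test function tied directly to the sign of $\phi(\theta)$, e.g. testing against $\operatorname{sgn}(\phi(\theta))$ or against $\phi(\theta)/(1+|\phi(\theta)|)$, or — most robustly — choosing the reference point to be $m(\theta)=0$ itself and exploiting that $\phi$ has a definite sign structure around $0$ only after using the $\alpha$-perturbation bound carefully. I would first establish the clean pointwise lemma $\phi(s)\,s \ge c_0|\phi(s)| - c_1$ for all $s\in[-1,1]$ for constants depending only on $\alpha$ (this uses $\phi(s)+\alpha s$ nondecreasing together with $\phi(\cdot)$ blowing up at $\pm1$ and $|s|\le1$), then integrate $\phi(\theta)-m(\phi(\theta))=f$ against $\theta$, use $\int_{\mathbb{T}^2}\theta\,dx=0$ to kill the $m(\phi(\theta))$ term entirely, obtain $\int_{\mathbb{T}^2}\phi(\theta)\theta\,dx=\int_{\mathbb{T}^2}f\theta\,dx$, hence $c_0\|\phi(\theta)\|_{L^1}-c_1|\mathbb{T}^2|\le\|f\|_{L^1}$, and conclude $|m(\phi(\theta))|\le|\mathbb{T}^2|^{-1}\|\phi(\theta)\|_{L^1}\le C(\|f\|_{L^1}+1)$. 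The delicate part is verifying the pointwise lemma uniformly up to the singular endpoints, which is where the precise hypotheses in \eqref{1-9} are used.
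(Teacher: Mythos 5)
Your final argument (the ``honest resolution'' paragraph) is correct and is essentially the paper's proof: the paper likewise splits $[-1,1]$ into a middle interval where $\phi$ is bounded and two endpoint intervals where $\phi$ has a definite sign and $|\phi|\ge 1$, thereby obtaining exactly your pointwise bound $\phi(s)s\ge c_0|\phi(s)|-c_1$ (using $(\phi(s)-\phi(0)+\alpha s)s\ge 0$), and then tests $\phi(\theta)-m(\phi(\theta))=f$ against $\theta$ with $m(\theta)=0$ to get $\int\phi(\theta)\theta\,dx=\int f\theta\,dx\le\|f\|_{L^1}$. The earlier detour with the zero $s_0$ of $\phi$ is indeed a dead end, as you recognized, but the route you settle on matches the paper.
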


\begin{proof}
According to \eqref{1-9}, there exist two constants $c_1, \, c_2 \in (0, 1)$  such that $\phi(\theta)>1$ for $\theta \in [c_1, 1)$ and  $\phi(\theta)<-1$ for $\theta \in (-1, -c_2]$, and
\begin{equation}\label{2-9-0}
\begin{split}
(\phi(s)-\phi(0)+\alpha s)' \geq 0 \quad \forall \, s \in (-1, 1),
\end{split}
\end{equation}
 which follows
\begin{equation}\label{2-9-1}
\begin{split}
&(\phi(\theta)-\phi(0)+\alpha \theta)\theta \geq 0 \quad \forall \, \theta \in (-1, 1).
\end{split}
\end{equation}
Since $\phi(\cdot)$ is continuous and then bounded on $[-c_2, c_1]$, we find
\begin{equation}\label{2-9-2}
|\int_{\theta \in [-c_2, c_1]}\phi(\theta)\, dx|\leq C.
\end{equation}
For $\theta\in [c_1, 1)$, we have $2c_1^{-1}\phi(\theta)\theta-\phi(\theta)=(2c_1^{-1}\theta-1)\phi(\theta)\geq0$, which yields $1 \leq \phi(\theta) \leq 2c_1^{-1}\phi(\theta)\theta$, and then ensures from \eqref{2-9-1}
\begin{equation*}\label{2-9-3}
\begin{split}
&|\int_{\theta\in [c_1, 1)} \phi(\theta)\, dx| \leq 2c_1^{-1} \int_{\theta\in (c_1, 1)}\phi(\theta)\theta\, dx \\
&=2c_1^{-1} \int_{\theta\in (c_1, 1)}(\phi(\theta)-\phi(0)+\alpha \theta)\theta\, dx +2c_1^{-1} \int_{\theta\in (c_1, 1)}(\phi(0)-\alpha \theta)\theta\, dx\\
&\leq 2c_1^{-1} \int_{\mathbb{T}^2}(\phi(\theta)-\phi(0)+\alpha\, \theta)\theta\, dx+2c_1^{-1} \int_{\mathbb{T}^2}|\phi(0)||\theta|\, dx.
\end{split}
\end{equation*}
Thus, it follows that
\begin{equation*}\label{2-9-4}
\begin{split}
&|\int_{\theta\in [c_1, 1)} \phi(\theta)\, dx| \leq 2c_1^{-1} \int_{\mathbb{T}^2}\phi(\theta)\,\theta\, dx+4c_1^{-1} |\phi(0)|\int_{\mathbb{T}^2}|\theta|\, dx+ 2c_1^{-1} \alpha\, \int_{\mathbb{T}^2}|\theta|^2\, dx,
\end{split}
\end{equation*}
which along with $\|\theta\|_{L^\infty(\mathbb{T}^2)}\leq 1$ leads to
\begin{equation}\label{2-9-5}
\begin{split}
&|\int_{\theta\in [c_1, 1)} \phi(\theta)\, dx| \leq 2C( \int_{\mathbb{T}^2}\phi(\theta)\,\theta\, dx+1).
\end{split}
\end{equation}
Similarly we may get
\begin{equation}\label{2-9-6}
\begin{split}
&|\int_{\theta\in (-1, -c_2]} \phi(\theta)\, dx| \leq 2C( \int_{\mathbb{T}^2}\phi(\theta)\,\theta\, dx+1).
\end{split}
\end{equation}
Combining \eqref{2-9-6} with \eqref{2-9-5} and \eqref{2-9-2} ensures
\begin{equation}\label{2-10}
\begin{split}
|m(\phi(\theta))| =\frac{1}{|\mathbb{T}|^2}|\int_{\mathbb{T}^2} \phi(\theta)\, dx| \leq C( \int_{\mathbb{T}^2}\phi(\theta)\theta\, dx +1)
\end{split}
\end{equation}
Next, we will bound the integral on the right hand side of \eqref{2-10}. In effect, multiplying \eqref{2-8} by $\theta$ and integrating the resulting equation over $\mathbb{T}^2$ yield
\begin{equation*}\label{2-11}
\int_{\mathbb{T}^2}\phi(\theta)\,\theta\, dx= m(\phi)\int_{\mathbb{T}^2}\theta\,dx+\int_{\mathbb{T}^2}f\, \theta\,dx=\int_{\mathbb{T}^2}f\, \theta\, dx,
\end{equation*}
where we have used $m(\theta)=0$, from which, it follows
\begin{equation}\label{2-11-1}
\begin{split}
&|\int_{\mathbb{T}^2} \phi(\theta)\theta\, dx| \leq \|\theta\|_{L^{\infty}}\|f\|_{L^1} \leq \|f\|_{L^1}.
\end{split}
\end{equation}
Combining \eqref{2-11-1} with \eqref{2-10} gives rise to \eqref{2-9}, which completes the proof of Proposition \ref{prop-2}.
\end{proof}

In order to make the solution $\theta$ separate away from the singular points, we want to apply the comparison principle to the convective Cahn-Hilliard equation (see Section 3), which is based on the following comparison principle of the first ordinary differential equation:
\begin{equation}\label{2-12}
\varepsilon y'+\phi(y)=h, \quad y(0)=y_0,\quad |y_0|<1.
\end{equation}
\begin{prop}[Comparison principle]\label{prop-3}
Let the function $\phi$ satisfy
\begin{equation}\label{2-12-0}
\lim_{y\rightarrow \pm 1^{\mp}} \phi(y)=\pm \infty,
\end{equation}
 and let us assume that $\varepsilon > 0$ and
\begin{equation}\label{2-13}
|y_0|\leq 1-\delta_0\quad \hbox{and}\quad h\in L^\infty([0,T])
\end{equation}
with some positive constant $\delta_0$ and $T$. Then for any solution $y(t)$ to \eqref{2-12} with $|y(t) | \leq 1$, there exists a constant $\delta=\delta(\delta_0, \,\|h\|_{L^\infty([0,T])})>0$ independent of $\varepsilon$,
such that
\begin{equation}
|y(t)|\leq 1-\delta,\quad\forall t\in[0,T].\label{2-14}
\end{equation}
\end{prop}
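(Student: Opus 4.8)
The plan is to prove the two-sided bound \eqref{2-14} by a comparison/barrier argument. By symmetry it suffices to establish the upper bound $y(t) \le 1-\delta$; the lower bound follows by applying the same reasoning to $-y$, which solves the same type of equation with $\phi$ replaced by the odd-reflected singular nonlinearity $\tilde\phi(z) := -\phi(-z)$ and right-hand side $-h$ (which also lies in $L^\infty([0,T])$ with the same norm), and then taking the smaller of the two values of $\delta$. So fix $M := \|h\|_{L^\infty([0,T])}$.

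The key step is to choose a threshold $1-\delta$ that is a \emph{supersolution barrier}. Using \eqref{2-12-0}, the fact that $\lim_{y\to 1^-}\phi(y)=+\infty$ lets us pick $\delta_1 \in (0, \delta_0)$ so small that $\phi(y) > M$ for all $y \in [1-\delta_1, 1)$; this is possible because $\phi$ is continuous on $(-1,1)$ (indeed $\Phi\in C^\infty((a,b))$) and blows up to $+\infty$ at $1$. Now set $\delta := \min\{\delta_1, \delta_0\} = \delta_1$ (since $\delta_1<\delta_0$), which depends only on $\delta_0$ and $M = \|h\|_{L^\infty([0,T])}$, and crucially \emph{not} on $\varepsilon$. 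I then claim $y(t) \le 1-\delta$ for all $t \in [0,T]$. Indeed, the initial condition \eqref{2-13} gives $y(0) \le 1-\delta_0 \le 1-\delta$. Suppose for contradiction that $y$ exceeds the threshold somewhere, and let $t_* := \inf\{t\in[0,T] : y(t) > 1-\delta\}$; by continuity $y(t_*) = 1-\delta$ and $t_* < T$, and there is a sequence $t_n \downarrow t_*$ with $y(t_n) > 1-\delta$, so $y'(t_*) \ge 0$ (more carefully: on $(t_*, t_*+\eta)$ we have $y(t) \in [1-\delta,1)$, hence $\phi(y(t)) > M \ge h(t)$, so $\varepsilon y'(t) = h(t)-\phi(y(t)) < 0$ there, forcing $y$ to be strictly decreasing on that interval and contradicting $y(t_n) > 1-\delta = y(t_*)$). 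This contradiction establishes $y(t) \le 1-\delta$ on all of $[0,T]$.

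One point to be careful about is the regularity of the solution $y$: \eqref{2-12} should be read in the sense that $y$ is absolutely continuous on $[0,T]$ with $|y(t)|\le 1$ and satisfies the ODE a.e., so the monotonicity argument above should be phrased via $y(t)-y(t_*) = \varepsilon^{-1}\int_{t_*}^t (h(\sigma)-\phi(y(\sigma)))\,d\sigma < 0$ for $t \in (t_*, t_*+\eta]$, which is clean since the integrand is strictly negative a.e. on that interval once $y$ stays in $[1-\delta,1)$ there (and it does, again by continuity of $y$). The main — really the only — subtlety is simply making the barrier argument airtight at the first crossing time while keeping $\delta$ manifestly independent of $\varepsilon$; the $\varepsilon$-independence is automatic here because the threshold $1-\delta$ is determined purely by where $\phi$ overtakes the bound $M$, and the sign of $\varepsilon y'$ at a hypothetical crossing does not depend on the size of $\varepsilon > 0$. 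No energy estimates or fine properties of $\phi$ beyond \eqref{2-12-0} and continuity are needed for this proposition.
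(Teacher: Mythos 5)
Your proof is correct in substance and rests on the same mechanism as the paper's: the blow-up $\lim_{y\to 1^-}\phi(y)=+\infty$ produces a level $1-\delta$, depending only on $\delta_0$ and $\|h\|_{L^\infty([0,T])}$ and not on $\varepsilon$, above which $\varepsilon y'=h-\phi(y)<0$, so the solution cannot cross it; the paper organizes this as a pointwise case split on the sign of $h-\phi(y)$ together with comparison against the constant solution $y\equiv y_0$, whereas you run a first-crossing barrier argument, which if anything handles the time-dependence of that sign more transparently. One small point to tighten: at your first crossing time $t_*$, continuity of $y$ does not by itself give $y(t)\ge 1-\delta$ on a right-neighborhood of $t_*$ (it only gives $y\ge 1-\delta-\epsilon$), so either replace $t_*$ by the last time $\tau$ with $y(\tau)\le 1-\delta$ before a putative time $t^{**}$ with $y(t^{**})>1-\delta$ (so that $y>1-\delta$ on all of $(\tau,t^{**}]$ and the integral of $h-\phi(y)$ over that interval is strictly negative), or shrink $\delta$ so that $\phi>\|h\|_{L^\infty([0,T])}$ already holds on $[1-2\delta,1)$.
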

\begin{proof}
We rewrite \eqref{2-12} as
\begin{equation}\label{2-12-1}
\begin{cases}
&\varepsilon y'=h-\phi(y), \\
&y|_{t=0}=y_0
\end{cases}
\end{equation}
with $|y_0|<1-\delta_0$.

Consider first the case $y \geq 0$. If $h-\phi(y)>0$, then $\phi(y) \leq \|h\|_{L^{\infty}([0, T])}$, which follows from \eqref{2-12-0} that, there exists a constant $\delta=\delta(\delta_0, \,\|h\|_{L^\infty([0,T])})>0$ which is independent of $\varepsilon$,
such that
\begin{equation*}
y(t)\leq 1-\delta,\quad\forall t\in[0,T].
\end{equation*}
On the other hand,  if $h-\phi(y)\leq 0$, applying the comparison principle of the first ordinary differential equation between \eqref{2-12-1} and the following equation
\begin{equation}\label{2-12-2}
  \begin{cases}
    &\varepsilon y'=0 (\geq h-\phi(y)),\\
    &y|_{t=0}=y_0,
  \end{cases}
\end{equation}
which has a unique solution $y\equiv y_0$, we get $0 \leq y \leq y_0\leq 1-\delta_0.$

While for the case $y < 0$: if $h-\phi(y)< 0$, then $\phi(y) \geq - \|h\|_{L^{\infty}([0, T])}$, which follows from \eqref{2-12-0} that, there exists a constant $\delta=\delta(\delta_0$, $\|h\|_{L^\infty([0,T])})>0$ which is independent of $\varepsilon$,
such that
\begin{equation*}
0 \geq y(t)\geq -(1-\delta),\quad\forall t\in[0,T].
\end{equation*}
Otherwise,  if $h-\phi(y)\geq 0$, applying the comparison principle of the first ordinary differential equation between \eqref{2-12-1} and \eqref{2-12-2}, we get $0 \geq y \geq y_0\geq -(1-\delta_0).$
Therefore, we complete the proof of Proposition \ref{prop-3}.
\end{proof}

\begin{prop}\label{prop-4}
Let the function $\phi=\Phi'$ satisfy \eqref{1-9} and $h$ belong to $L^2([0,T])$. Then, for any solution $y(t)$ of \eqref{2-12} with $|y| \leq 1$, there holds
\begin{equation}\label{2-15}
\int^T_0|\phi(y)|^2dt\leq C_T (1+\|h\|^2_{L^2([0,T])}),
\end{equation}
where the constant $C_T$ is independent of $\varepsilon$.
\end{prop}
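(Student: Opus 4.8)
\emph{Approach.} The plan is to carry out the natural ``$\phi(y)$--energy estimate'' for the ODE (2.12) and to exploit the fact that, although $\phi$ is singular, its primitive $\Phi$ is \emph{bounded} on $[-1,1]$ (it is continuous there by Assumption 2.1, recalling $[a,b]=[-1,1]$), so the boundary term produced by the time integration by parts is harmless; this is what makes the resulting constant independent of $\varepsilon$.

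\emph{Key steps.} First set $M:=\sup_{s\in[-1,1]}|\Phi(s)|<\infty$. Next, multiply the identity $\varepsilon y'+\phi(y)=h$ by $\phi(y)$; since $|y(t)|\le 1$ in fact forces $y(t)\in(-1,1)$ for every $t\in[0,T]$ (were $y(t_0)=\pm1$, the finite quantity $h(t_0)-\varepsilon y'(t_0)$ could not equal $\phi(y(t_0))=\pm\infty$), the composition $\Phi(y(\cdot))$ is absolutely continuous with $\frac{d}{dt}\Phi(y)=\phi(y)\,y'$, and Young's inequality $h\phi(y)\le\tfrac12|h|^2+\tfrac12|\phi(y)|^2$ gives the differential inequality
\begin{equation*}
\varepsilon\,\frac{d}{dt}\Phi(y)+\tfrac12|\phi(y)|^2\le\tfrac12|h|^2 .
\end{equation*}
Integrating over $[0,T]$ and using $|\Phi(y(T))|\le M$ and $|\Phi(y_0)|\le M$ (valid since $|y_0|<1$) yields
\begin{equation*}
\tfrac12\int_0^T|\phi(y)|^2\,dt\le\tfrac12\|h\|_{L^2([0,T])}^2+\varepsilon\bigl(\Phi(y_0)-\Phi(y(T))\bigr)\le\tfrac12\|h\|_{L^2([0,T])}^2+2\varepsilon M .
\end{equation*}
Hence $\int_0^T|\phi(y)|^2\,dt\le\|h\|_{L^2([0,T])}^2+4\varepsilon M$, and for $\varepsilon\in(0,1]$ — the regime relevant to the approximation scheme — this is at most $(1+4M)\bigl(1+\|h\|_{L^2([0,T])}^2\bigr)$, which is (2.15) with a constant independent of $\varepsilon$ (and in fact of $T$ as well).

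\emph{Main obstacle.} Everything above is essentially a one-line energy estimate; the only genuine point requiring care is the legitimacy of testing against $\phi(y)$ and of the chain rule for $\Phi\circ y$, since a priori we do not even know $\phi(y)\in L^2$ — that integrability is precisely the conclusion. I would dispose of this either through the strict-containment remark above (for a classical solution, $y([0,T])$ is a compact subset of $(-1,1)$ on which $\phi$ is smooth and bounded, so the computation is literally valid), or, for a weaker notion of solution, by first running the estimate with a globally Lipschitz approximation $\phi_\lambda$ of $\phi$ still satisfying $\phi_\lambda'\ge-\alpha$ and with primitives $\Phi_\lambda$ uniformly bounded on $[-1,1]$, obtaining a bound uniform in $\lambda$ and $\varepsilon$, and then passing to the limit $\lambda\to\infty$ via stability of the ODE and Fatou's lemma, so that the $\lambda$-independence of the bound survives. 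This truncation device is the same one used for the Cahn–Hilliard equation later in the paper, so it fits the overall strategy.
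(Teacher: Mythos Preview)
Your proposal is correct and follows essentially the same route as the paper: multiply the ODE by $\phi(y)$, integrate in time, apply Young's inequality, and control the boundary term $\varepsilon\bigl(\Phi(y_0)-\Phi(y(T))\bigr)$ uniformly in $\varepsilon$. The only cosmetic difference is that the paper handles the boundary term by introducing the shifted primitive $\tilde\Phi(y)=\Phi(y)-\phi(0)y+\tfrac{\alpha}{2}y^2\ge 0$ and dropping $\varepsilon\tilde\Phi(y(T))$, whereas you use directly that $\Phi\in C([-1,1])$ is bounded; your version is slightly cleaner and, as you note, even yields a constant independent of $T$.
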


\begin{proof}
Multiplying \eqref{2-12} by $\phi(y)$ and integrating over $[0,T]$ yield
\begin{equation}\label{2-16}
\varepsilon\Phi(y(T))+\int^T_0|\phi(y)|^2=\varepsilon\Phi(y(0))+\int^T_0 h(t)\phi(y(t))dt.
\end{equation}
Letting $\tilde{\Phi}(y):=\int_0^y(\phi(s)-\phi(0)+\alpha s)\, ds=\Phi(y)-\phi(0)\, y +\frac{\alpha}{2}|y|^2$ for any $y \in (-1, 1)$, we infer from \eqref{2-9-0} that
\begin{equation}\label{2-16-1}
\tilde{\Phi}(y) \geq 0 \quad \forall \, y \in (-1, 1),
\end{equation}
it follows from \eqref{2-16} that
\begin{equation*}\label{2-17}
\begin{split}
&\varepsilon\tilde\Phi(y(T))+\int^T_0|\phi(y)|^2=\varepsilon\Phi(y(0))+\varepsilon(-\phi(0)\, y(T) +\alpha|y(T)|^2)+\int^T_0 h(t)\phi(y(t))dt\\
&\leq \varepsilon\Phi(y(0))+\frac{1}{2}\int^T_0 |h(t)|^2dt+\frac{1}{2}\int^T_0 |\phi(y)|^2dt+\varepsilon(|\phi(0)|\, |y(T)| +\alpha|y(T)|^2).
\end{split}
\end{equation*}
Hence, we get from the facts that $|y|\leq1$ and $\tilde{\Phi}(y) \geq0$ that
\begin{equation*}\label{2-18}
\int^T_0|\phi(y)|^2dt\leq C_{T}(1+\|h\|^2_{L^2([0,T])}).
\end{equation*}
This ends the proof of Proposition \ref{prop-4}.
\end{proof}

\renewcommand{\theequation}{\thesection.\arabic{equation}}
\setcounter{equation}{0}
\section{Basic energy estimates}\label{sect-3}

\begin{lem}\label{lem-unif-u-1}
Under the assumptions in Theorem \ref{thm-glo-wp}, let $(u,\theta)$ be a smooth solution to the Navier-Stokes-Cahn-Hilliard
system \eqref{NSCH-2} on $[0,T)$ for $0 <T <+\infty$ satisfying
\begin{equation*}
\begin{split}
&(u, \theta)\in  \bigg(C([0,T); H^s(\mathbb{T}^2))\cap L^2_{loc}([0,T); H^{s+1}(\mathbb{T}^2)) \bigg) \\
&\qquad\qquad\qquad\qquad\qquad\times
 \bigg(C([0,T); H^s(\mathbb{T}^2)) \cap L^2_{loc}([0,T); H^{s+2}(\mathbb{T}^2))\bigg).
 \end{split}
\end{equation*}
Then there holds
\begin{equation}\label{3-56-2}
\|\theta\|_{L^\infty((0, T)\times \mathbb{T}^2)}\leq  1,
\end{equation}
and moreover,
\begin{equation}\label{3-56-3}
\begin{split}
&\|u\|^2_{L^\infty((0,T); H^1(\mathbb{T}^2))}+\|u_t\|^2_{L^2((0,T); L^2(\mathbb{T}^2))}+\|u\|^2_{L^2((0,T); H^2(\mathbb{T}^2))}
\\
&+\|\theta\|^2_{L^\infty((0,T); H^1(\mathbb{T}^2))}+\|\theta\|^2_{L^2((0,T); H^{2}(\mathbb{T}^2))}+\|\nabla\mu\|^2_{L^2((0,T); L^2(\mathbb{T}^2))}\leq C_T(u_0,\theta_0).
\end{split}
\end{equation}
\end{lem}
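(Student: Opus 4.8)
The plan is to establish \eqref{3-56-2} and \eqref{3-56-3} through the standard hierarchy of energy identities for the NS-CH system, organized from the ``physical'' energy upward.

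\textbf{Step 1: The $L^\infty$ bound on $\theta$.}
First I would multiply the basic Ginzburg--Landau energy functional by formally writing $\frac{d}{dt}\mathcal{E}(\theta)$ along the flow. Since $\Phi(s)=+\infty$ for $s\notin[-1,1]$, finiteness of $\mathcal{E}(\theta_0)$ (which follows from \eqref{separate-condition-1}, as $\|\theta_0\|_{L^\infty}\leq 1-\delta_0$) together with the energy dissipation identity below forces $\int_{\mathbb{T}^2}\Phi(\theta(t))\,dx<\infty$ for all $t\in[0,T)$; then Remark \ref{rmk-Phi-1} gives $\theta(t,x)\in[-1,1]$ for a.e.\ $x$, which is \eqref{3-56-2}. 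One has to be a little careful to justify that the smooth solution keeps $\mathcal{E}(\theta(t))$ finite rather than just bounded by the dissipation inequality, but since $\theta\in C([0,T);H^s)$ with $s>1$ it is continuous in $x$, so $\|\theta(t)\|_{L^\infty}$ varies continuously in $t$ and cannot jump past $1$; combined with the energy bound this rules out $\|\theta(t)\|_{L^\infty}>1$.

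\textbf{Step 2: The basic energy law.}
Next I would take the $L^2$ inner product of the momentum equation with $u$, the $\theta$-equation with $\mu$, use $\dive u=0$, the boundary/periodicity to kill boundary terms, and the cancellation of the Korteweg force $\mu\nabla\theta$ against the transport term $u\cdot\nabla\theta$ tested against $\mu$. Using $\mu=\phi(\theta)-\Delta\theta$ and $\partial_t\Phi(\theta)=\phi(\theta)\partial_t\theta$, this yields
\begin{equation*}
\frac{d}{dt}\Big(\tfrac12\|u\|_{L^2}^2+\tfrac12\|\nabla\theta\|_{L^2}^2+\int_{\mathbb{T}^2}\Phi(\theta)\,dx\Big)+\int_{\mathbb{T}^2}2\nu(\theta)|Du|^2\,dx+\|\nabla\mu\|_{L^2}^2=0.
\end{equation*}
Since $\nu\geq\nu_0>0$ on $[-1,1]$ (by Step 1 $\theta$ takes values there) and Korn's inequality on $\mathbb{T}^2$ for divergence-free fields controls $\|\nabla u\|_{L^2}$ by $\|Du\|_{L^2}$, integrating in time gives $u\in L^\infty(0,T;L^2)\cap L^2(0,T;H^1)$, $\nabla\theta\in L^\infty(0,T;L^2)$, and $\nabla\mu\in L^2(0,T;L^2)$. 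That $\theta$ itself is bounded in $L^\infty(0,T;L^2)$ follows since $\int\theta=0$ is conserved (test the $\theta$-equation with $1$) plus Poincar\'e. To get $\theta\in L^2(0,T;H^2)$, note $\|\Delta\theta\|_{L^2}\leq\|\mu\|_{L^2}+\|\phi(\theta)\|_{L^2}$; the term $\|\mu\|_{L^2}$ is controlled via $\|\nabla\mu\|_{L^2}$ and the mean $m(\mu)=m(\phi(\theta))$, which by Proposition \ref{prop-2} (with $f=\mu-m(\mu)$, $\|f\|_{L^1}\lesssim\|\nabla\mu\|_{L^2}$) is controlled; the term $\|\phi(\theta)\|_{L^2}$ needs an extra argument.

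\textbf{Step 3: $H^1$ estimate on $u$ and $L^2L^2$ bound on $u_t$, plus control of $\phi(\theta)$.}
Here I would test the momentum equation with $u_t$ (or equivalently with $-\Delta u$; testing with $u_t$ directly produces $\|u_t\|_{L^2}^2$). This gives
\begin{equation*}
\|u_t\|_{L^2}^2+\frac{d}{dt}\int_{\mathbb{T}^2}\nu(\theta)|Du|^2\,dx = -(u\cdot\nabla u,u_t)+(\mu\nabla\theta,u_t)+\int_{\mathbb{T}^2}\nu'(\theta)\partial_t\theta\,|Du|^2\,dx .
\end{equation*}
The first term on the right is handled by $\|u\cdot\nabla u\|_{L^2}\leq\|u\|_{L^4}\|\nabla u\|_{L^4}\lesssim\|u\|_{L^2}^{1/2}\|u\|_{H^1}\|u\|_{H^2}^{1/2}$ absorbed into viscous dissipation of the Stokes problem via maximal regularity (treat $-\dive(2\nu(\theta)Du)+\nabla g = f - u_t$ as a stationary Stokes system with variable viscosity). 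The Korteweg term $(\mu\nabla\theta,u_t)$ is the delicate one: I would integrate by parts in space to move it onto $\theta$-quantities, using the $\theta$-equation $\partial_t\theta=\Delta\mu-u\cdot\nabla\theta$ after rewriting $(\mu\nabla\theta,u_t)=\frac{d}{dt}(\mu\nabla\theta,u)-(\partial_t(\mu\nabla\theta),u)$, and then bound $\partial_t\mu$ and $\partial_t\nabla\theta$ using the parabolic smoothing already available. The main obstacle will be closing this loop: one needs $\phi(\theta)$, or at least $\phi'(\theta)\nabla\theta$, in a good space, and the singular growth of $\phi$ near $\pm1$ means $L^\infty$ control of $\phi(\theta)$ is not yet available at this stage. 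The resolution is to test the $\theta$-equation with $-\Delta\theta$ and with $\partial_t\theta$: testing with $\partial_t\theta$ gives $\|\partial_t\theta\|_{\dot H^{-1}}$-type control plus $\frac{d}{dt}(\frac12\|\nabla\theta\|_{L^2}^2+\int\Phi(\theta))$, already used; testing $\mu=\phi(\theta)-\Delta\theta$ against $\Delta^2\theta$ or against $\phi(\theta)$ directly, using $\phi'\geq-\alpha$ from \eqref{1-9} to get $(\nabla\phi(\theta),\nabla\theta)=\int\phi'(\theta)|\nabla\theta|^2\geq-\alpha\|\nabla\theta\|_{L^2}^2$, yields $\|\nabla\Delta\theta\|_{L^2}$ and hence $\theta\in L^2(0,T;H^3)$ — but that is more than \eqref{3-56-3} asks. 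For the stated bound it suffices to combine $\|\mu\|_{L^2}$ control with the identity $\|\phi(\theta)\|_{L^2}^2 = (\mu+\Delta\theta,\phi(\theta)) = (\mu,\phi(\theta)) - (\nabla\theta,\nabla\phi(\theta)) \leq (\mu,\phi(\theta)) + \alpha\|\nabla\theta\|_{L^2}^2$, and then $(\mu,\phi(\theta))\leq\|\mu\|_{L^2}\|\phi(\theta)\|_{L^2}$ is absorbed, giving $\|\phi(\theta)\|_{L^2}^2\lesssim\|\mu\|_{L^2}^2+\|\nabla\theta\|_{L^2}^2$, which feeds back into Step 2 to complete $\theta\in L^2(0,T;H^2)$. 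Finally, with $u\in L^\infty(0,T;H^1)\cap L^2(0,T;H^2)$ and the Stokes maximal-regularity estimate, $u_t\in L^2(0,T;L^2)$ follows, and all the constants depend only on $T$ and the norms of $(u_0,\theta_0)$, i.e.\ on $C_T(u_0,\theta_0)$. I expect Step 3 — reconciling the Korteweg coupling with the viscosity-dependent Stokes estimate while avoiding any use of $L^\infty$ bounds on $\phi(\theta)$ that are not yet established — to be the crux of the argument.
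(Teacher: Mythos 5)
Your overall architecture (energy law, then $H^1$ estimate on $u$, then $u_t$) matches the paper's, and Steps 1--2 are essentially right. But the step you yourself flag as the crux --- the Korteweg term in the $u_t$ and $-\Delta u$ estimates --- is left genuinely unresolved, and your proposed workaround would fail. You suggest writing $(\mu\nabla\theta,u_t)=\frac{d}{dt}(\mu\nabla\theta,u)-(\partial_t(\mu\nabla\theta),u)$ and bounding $\partial_t\mu$; but $\partial_t\mu=\phi'(\theta)\partial_t\theta-\Delta\partial_t\theta$ contains $\phi'(\theta)$, which is exactly the singular quantity that is \emph{not} controlled at this stage (the separation of $\theta$ from $\pm1$ is only established much later, in Section 5, and itself uses the bounds of this lemma as input). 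The paper's resolution is a one-line identity you never invoke: since $\mu\nabla\theta=\nabla(\mu\theta)-\theta\nabla\mu$, testing against any divergence-free field $v$ gives $(\mu\nabla\theta,v)=-(\theta\nabla\mu,v)$, so with $v=-\Delta u$ or $v=\partial_t u$ the Korteweg term is bounded by $\|\theta\|_{L^\infty}\|\nabla\mu\|_{L^2}\|v\|_{L^2}\leq\|\nabla\mu\|_{L^2}\|v\|_{L^2}$ using only \eqref{3-56-2} and the dissipation $\nabla\mu\in L^2L^2$. No control of $\phi(\theta)$, $\phi'(\theta)$, or $\mu$ itself is ever needed. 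Without this (or an equivalent device) your Step 3 does not close.

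A second, smaller issue is your route to $\theta\in L^2(0,T;H^2)$. The paper obtains it immediately by testing the $\theta$-equation with $\theta$: since $\int\theta\,\Delta\mu\,dx=-\|\Delta\theta\|_{L^2}^2-\int\phi'(\theta)|\nabla\theta|^2\,dx\leq-\|\Delta\theta\|_{L^2}^2+\alpha\|\nabla\theta\|_{L^2}^2$ by \eqref{1-9}, Gronwall gives $\frac{d}{dt}\|\theta\|_{L^2}^2+\|\theta\|_{H^2}^2\lesssim\|\theta\|_{L^2}^2$ with no reference to $\mu$ or $\phi(\theta)$ at all. Your detour through $\|\Delta\theta\|_{L^2}\leq\|\mu\|_{L^2}+\|\phi(\theta)\|_{L^2}$ can be made to work, but as written your application of Proposition \ref{prop-2} is incorrect: that proposition requires $\phi(\theta)-m(\phi(\theta))=f$, and here $\phi(\theta)-m(\phi(\theta))=\mu-m(\mu)+\Delta\theta$, not $\mu-m(\mu)$; taking $f=\mu-m(\mu)$ as you propose does not satisfy the hypothesis, and the corrected choice reintroduces $\|\Delta\theta\|_{L^2}$ into the bound you are trying to prove, so the resulting self-improvement argument needs to be closed explicitly. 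Both issues disappear if you adopt the paper's two devices: test the $\theta$-equation with $\theta$ and use $\phi'\geq-\alpha$; and always convert $\mu\nabla\theta$ into $-\theta\nabla\mu$ against divergence-free test fields.
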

\begin{proof}
Multiplying the $\theta$-equation of \eqref{NSCH-2} by $\theta$ and integrating the resulting equation on $\mathbb{T}^2$ yield
\begin{equation} \label{3-57}
\frac{1}{2}\frac{d}{dt}\|\theta\|^2_{L^2}+\int_{\mathbb{T}^2} u \cdot\nabla \theta\, \theta\, dx=\int_{\mathbb{T}^2}\theta \,\Delta\mu \,dx.
\end{equation}
Thanks to $\nabla \cdot u=0$ and $\phi'\geq-\alpha$, we find that
\begin{equation}\label{3-58}
\int_{\mathbb{T}^2} u \cdot\nabla \theta\, \theta\, dx=0,
\end{equation}
and
\begin{equation}\label{3-59}
\begin{split}
&\int_{\mathbb{T}^2} \theta \,\Delta\mu \,dx=-\int_{\mathbb{T}^2} |\Delta\theta|^2\, dx-\int_{\mathbb{T}^2}  \phi'(\theta)|\nabla\theta|^2\,dx\leq-\|\Delta\theta\|^2_{L^2}+\alpha\|\nabla\theta\|^2_{L^2}\\
&\leq-\frac{3}{4}\|\Delta\theta\|^2_{L^2}+C\|\theta\|^2_{L^2},
\end{split}
\end{equation}
where we have used the interpolation inequality $\|\nabla\theta\|_{L^2(\mathbb{T}^2)}^2 \leq C \|\theta\|_{L^2(\mathbb{T}^2)}\|\Delta\theta\|_{L^2(\mathbb{T}^2)}$ and Young's inequality in the last inequality.

Thus, it follows  that
\begin{equation}\label{3-60}
\frac{d}{dt}\|\theta\|^2_{L^2}+\|\theta\|^2_{H^2} \leq C\|\theta\|^2_{L^2},
\end{equation}
which along with Gronwall's inequality gives rise to
\begin{equation}\label{3-61}
\sup_{\tau\in [0, t]}\|\theta(\tau)\|^2_{L^2}+\int^t_0\|\theta\|^2_{H^2}\,d\tau \leq C e^{Ct}\|\theta_0\|_{L^2}^2.
\end{equation}
On the other hand, multiplying the $\theta$-equation and the $u$-equation of \eqref{NSCH-2} by $\mu$ and $u$ respectively, and then integrating the resulting equations over $\mathbb{T}^2$, we get
\begin{equation*}
\begin{split}
&\int_{\mathbb{T}^2} \partial_t\theta(-\Delta\theta+\phi(\theta))\,dx+\int_{\mathbb{T}^2}  (u\cdot\nabla)\theta\, \mu\, dx=\int_{\mathbb{T}^2}   |\nabla\mu|^2\, dx
\end{split}
\end{equation*}
and
\begin{equation*}
\begin{split}
&\frac{1}{2}\frac{d}{dt}\|u\|^2_{L^2}+\int_{\mathbb{T}^2} (u\cdot\nabla u)\cdot u\,dx-\int_{\mathbb{T}^2} \dive(2\nu(\theta)D(u))\cdot u\,dx=\int_{\mathbb{T}^2} \mu\nabla\theta\cdot u\,dx,
\end{split}
\end{equation*}
which yields
\begin{equation}\label{3-67}
\frac{d}{dt}\bigg(\frac{1}{2}\|\nabla\theta\|^2_{L^2}+\frac{1}{2}\|u\|^2_{L^2}+\int_{\mathbb{T}^2} \Phi(\theta)\,dx\bigg)
+\int_{\mathbb{T}^2} ( |\nabla\mu|^2+2\nu(\theta)|D(u)|^2)\,dx=0.
\end{equation}
Hence, we establish from $\|\nabla u\|^2_{L^2}\leq C\|Du\|^2_{L^2}$ that
\begin{equation}\label{3-68}
\begin{split}
&\frac{1}{2}(\|\nabla\theta(t)\|^2_{L^2}+\|u(t)\|^2_{L^2})+\int_{\mathbb{T}^2} \Phi(\theta(t))\,dx+\int^t_0(\|\nabla\mu(\tau)\|^2_{L^2}+\nu_0\|\nabla u(\tau)\|^2_{L^2})\,d\tau
\\
&\leq \frac{1}{2}(\|\nabla\theta_0\|^2_{L^2}+\|u_0\|^2_{L^2}+2\int_{\mathbb{T}^2} \Phi(\theta_0)\,dx)
\end{split}
\end{equation}
for some positive constant $\nu_0$.
Letting $\tilde{\Phi}(\theta)=\Phi(\theta)-\phi(0)\theta+\frac{\alpha}{2}\theta^2$, where $\tilde{\Phi}(\theta)\geq 0$ from \eqref{2-16-1}, we infer from \eqref{3-68} that
\begin{equation*}
\begin{split}
&\frac{1}{2}(\|\nabla\theta(t)\|^2_{L^2}+\|u(t)\|^2_{L^2})+\int_{\mathbb{T}^2} \tilde{\Phi}(\theta(t))\,dx+\int^t_0\|\nabla\mu(s)\|^2_{L^2}\,ds+\nu_0\int^t_0\|\nabla u(s)\|^2_{L^2}\,ds
\\
&\leq \frac{1}{2}(\|\nabla\theta_0\|^2_{L^2}+\|u_0\|^2_{L^2})+\int_{\mathbb{T}^2} \Phi(\theta_0)\,dx+C\|\theta(t)\|^2_{L^2},
\end{split}
\end{equation*}
which along with \eqref{3-61} implies
\begin{equation}\label{3-69}
\begin{split}
&\frac{1}{2}(\|\nabla\theta(t)\|^2_{L^2}+\|u(t)\|^2_{L^2})+\int_{\mathbb{T}^2} \tilde{\Phi}(\theta(t))\,dx+\int^t_0\|\nabla\mu(s)\|^2_{L^2}\,ds+\nu_0\int^t_0\|\nabla u(s)\|^2_{L^2}\,ds
\\
&\leq \frac{1}{2}(\|\nabla\theta_0\|^2_{L^2}+\|u_0\|^2_{L^2})+\int_{\mathbb{T}^2} \Phi(\theta_0)\,dx+ C e^{Ct}\|\theta_0\|_{L^2}^2 \leq C_T,
\end{split}
\end{equation}
and then \eqref{3-56-2} holds from \eqref{1-9}, the definition of $\Phi$.

Let us now estimate $\|\nabla u\|^2_{L^{\infty}([0,T]; L^2(\mathbb{T}^2))}$.
Multiplying the $u$-equation of \eqref{NSCH-2} by $-\Delta u$ and then integrating the resulting equation over $\mathbb{T}^2$ provide that
\begin{equation}\label{3-70}
\begin{split}
&\frac{1}{2}\frac{d}{dt}\|\nabla u\|^2_{L^2}-\int_{\mathbb{T}^2}  (u\cdot\nabla) u\cdot\Delta u\,dx+\int_{\mathbb{T}^2} \dive(2\nu(\theta)Du)\cdot \Delta u\,dx
=-\int_{\mathbb{T}^2} \mu\, \nabla\theta\cdot\Delta u\,dx,
\end{split}
\end{equation}
We first record that via H\"{o}lder's inequality and the interpolation inequality
\begin{equation}\label{3-71}
\begin{split}
&\int_{\mathbb{T}^2}  (u\cdot\nabla) u\cdot\Delta u\,dx  \leq \|\Delta u\|_{L^2}\|u\|_{L^4}\|\nabla u\|_{L^4} \leq C\|\Delta u\|_{L^2}^{\frac{3}{2}}\|u\|_{L^2}^{\frac{1}{2}}\|\nabla u\|_{L^2}
\\
&\leq \eta\|\Delta u\|^2_{L^2}+C_{\eta}\|u\|_{L^2}^2\|\nabla u\|^4_{L^2}
\end{split}
\end{equation}
for any positive constant $\eta$ (which will be determined later on), and
\begin{equation}\label{3-72}
\begin{split}
&-\int_{\mathbb{T}^2} \mu\, \nabla\theta\cdot\Delta u\,dx=\int _{\mathbb{T}^2} \theta\nabla\mu\cdot\Delta u\,dx\leq \eta\|\Delta u\|^2_{L^2}+C_{\eta}\|\theta\|^2_{{L^\infty}}\|\nabla\mu\|^2_{L^2}
\\
&\leq\eta\|\Delta u\|^2_{L^2}+C_{\eta}\|\nabla\mu\|^2_{L^2},
\end{split}
\end{equation}
where we have used the fact that $\dive\, u=0$ in the first equality. On the other hand, noting that
\begin{equation}\label{3-72-1}
\begin{split}
&\int_{\mathbb{T}^2} \dive(2\nu(\theta) Du)\cdot \Delta u\,dx=\int_{\mathbb{T}^2} 2\nu'(\theta) (Du\nabla\theta)\cdot \Delta u\,dx+\int_{\mathbb{T}^2} \nu(\theta)|\Delta u|^2\,dx,
\end{split}
\end{equation}
we find
\begin{equation}\label{3-73}
\begin{split}
&\int_{\mathbb{T}^2} \dive(2\nu(\theta) Du)\cdot \Delta u\,dx \geq\nu_1\|\Delta u\|^2_{L^2}-C\|\nabla u\|_{L^4}\|\nabla\theta\|_{L^4}\|\Delta u\|_{L^2}
\end{split}
\end{equation}
for some positive constant $\nu_1$, which, together with the inequality
\begin{equation}\label{3-73-1}
\begin{split}
\|\nabla\theta\|_{L^4}\|\nabla u\|_{L^4}\|\Delta u\|_{L^2} &\leq C \|\nabla\theta\|_{L^2}^{\frac{1}{2}}\|\Delta\theta\|_{L^2}^{\frac{1}{2}}\|\nabla u\|_{L^2}^{\frac{1}{2}}\|\Delta u\|_{L^2}^{\frac{3}{2}}\\
&\leq \eta\|\Delta u\|^2_{L^2}+C_{\eta}\|\nabla\theta\|_{L^2}^2\|\Delta\theta\|_{L^2}^2\|\nabla u\|_{L^2}^2,
\end{split}
\end{equation}
follows that
\begin{equation}\label{3-73-2}
\begin{split}
&\int_{\mathbb{T}^2} \dive(2\nu(\theta) Du)\cdot \Delta u\,dx \geq (\nu_1-\eta)\|\Delta u\|^2_{L^2}-C_{\eta}\|\nabla\theta\|_{L^2}^2\|\Delta\theta\|_{L^2}^2\|\nabla u\|_{L^2}^2.
\end{split}
\end{equation}
Inserting \eqref{3-71}-\eqref{3-73-2} into \eqref{3-70} yields
\begin{equation}\label{3-74}
\begin{split}
&\frac{1}{2}\frac{d}{dt}\|\nabla u\|^2_{L^2}+(\nu_1-3\eta)\|\Delta u\|^2_{L^2}\\
&\leq C_{\eta}\|\nabla\mu\|^2_{L^2}+C_{\eta}\|\nabla u\|_{L^2}^2(\|\nabla\theta\|_{L^2}^2\|\Delta\theta\|_{L^2}^2+\|u\|_{L^2}^2\|\nabla u\|^2_{L^2}).
\end{split}
\end{equation}
Taking $\eta=\frac{1}{8}\nu_1$ in \eqref{3-74} and using Gronwall' s inequality, we infer from \eqref{3-61} and \eqref{3-69} that
\begin{equation}\label{3-75}
\begin{split}
&\sup_{\tau \in [0, t]}\|\nabla u(\tau)\|^2_{L^2}+\nu_1\int^t_0\|\Delta u(\tau)\|^2_{L^2}\,d\tau
\\
&\leq  C(\|\nabla u_0\|^2_{L^2}+\int_0^t\|\nabla\mu\|^2_{L^2}\,d\tau)e^{C\int^t_0(\|\nabla\theta\|_{L^2}^2\|\Delta\theta\|^2_{L^2}+\|u\|_{L^2}^2\|\nabla u\|^2_{L^2})\,d\tau}\leq C_{T}.
\end{split}
\end{equation}
On the other hand, taking the $L^2$ inner product of the $u$-equation in \eqref{NSCH-2} with $\partial_t u$ ensures
\begin{equation*}
\begin{split}
&\|\partial_t u\|_{L^2}^2\leq C \|u\cdot\nabla  u\|_{L^2}^2+C\|\nabla \cdot(2\,\nu(\theta) D(u))\|_{L^2}^2+C|\int_{\mathbb{T}^2}\mu \nabla \theta\cdot \partial_t u\, dx|
\\
&=C \|u\cdot\nabla  u\|_{L^2}^2+C\|\nabla \cdot(2\,\nu(\theta) D(u))\|_{L^2}^2+C|\int_{\mathbb{T}^2}\theta \nabla \mu \cdot \partial_t u\, dx|\\
&\lesssim \|u\|_{L^4}^2\|\nabla  u\|_{L^4}^2+\|2\nu'(\theta) Du\nabla\theta+\nu(\theta)\Delta u\|_{L^2}^2+\|\theta \nabla \mu\|_{L^2}\|\partial_t u\|_{L^2}
\\
&\lesssim\|u\|_{L^2}\|\nabla  u\|_{L^2}^2\|\Delta u\|_{L^2}+\|\nabla u\|_{L^2}\|\Delta u\|_{L^2}\|\nabla \theta\|_{L^2}\|\Delta \theta\|_{L^2}+\|\Delta u\|_{L^2}^2+\| \nabla \mu\|_{L^2}\|\partial_t u\|_{L^2},
\end{split}
\end{equation*}
which follows
\begin{equation*}\label{3-80-1}
\begin{split}
&\|\partial_t u\|_{L^2}^2\lesssim \|u\|_{L^2}^2 \|\nabla  u\|_{L^2}^4+\| \nabla \mu\|_{L^2}^2 + (1+\|\nabla u\|_{L^2}^2+\|\nabla \theta\|_{L^2}^2) (\|\Delta u\|_{L^2}^2+\|\Delta \theta\|_{L^2}^2).
\end{split}
\end{equation*}
Therefore, we obtain from \eqref{3-69} and \eqref{3-75} again that
\begin{equation*}\label{3-80}
\|\partial_t u\|_{L^2([0, T]; L^2)} \leq C_{T},
\end{equation*}
which ends the proof of Lemma \ref{lem-unif-u-1}.
\end{proof}

\renewcommand{\theequation}{\thesection.\arabic{equation}}
\setcounter{equation}{0}
\section{Local well-posedness}\label{sect-4}

This section is devoted to the proof of the local well-posedness of the system \eqref{NSCH-2}. In order to achieve the goal, we need to prove the solution of the second equation in \eqref{NSCH-2} is separated from singular points of the non-linearity $\phi$.

\begin{thm}\label{thm-lp-nsch}
Under the assumptions in Theorem \ref{thm-glo-wp}, there exist $T>0$ and a unique solution $(u,\theta)$ on $[0,T]$ of the Navier-Stokes-Cahn-Hilliard
system \eqref{NSCH-2} such that
\begin{equation}\label{local-space}
\begin{split}
(u, \theta)\in &\bigg(C([0,T]; H^s(\mathbb{T}^2))\cap (L^2([0,T]; H^{s+1}(\mathbb{T}^2)) \bigg) \\
&\qquad \times \bigg(C([0,T]; H^s(\mathbb{T}^2))\cap L^2([0,T]; H^{s+2}(\mathbb{T}^2)))\bigg).
\end{split}
\end{equation}
Moreover, there holds
\begin{equation}\label{separate-cond-local-1}
\|\theta\|_{L^\infty([0, T] \times \mathbb{T}^2)}\leq 1-\frac{1}{4}\delta_0.
\end{equation}
\end{thm}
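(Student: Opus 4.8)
The plan is to run a fixed-point / Picard iteration scheme in the space
$$
X_T := \bigl(C([0,T];H^s)\cap L^2([0,T];H^{s+1})\bigr)\times\bigl(C([0,T];H^s)\cap L^2([0,T];H^{s+2})\bigr),
$$
combined with a continuity / bootstrap argument to maintain the separation bound \eqref{separate-cond-local-1}, which is what keeps $\phi(\theta)$ and all its derivatives harmless. The subtlety is that $\phi$ is only defined and smooth on the open interval $(-1,1)$, so before one can even estimate $\phi(\theta)$ in Sobolev norms via Lemma \ref{lem-4} one must know that $\theta$ stays inside a fixed compact subinterval $[-1+\tfrac14\delta_0,\,1-\tfrac14\delta_0]$. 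Thus the separation estimate is not an afterthought but must be threaded through the iteration.

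First I would set up the linearized system: given $(\bar u,\bar\theta)$ in a suitable ball of $X_T$ satisfying the separation bound, solve the linear parabolic system for $(u,\theta)$ with $u$ solving a Stokes-type equation with coefficient $\nu(\bar\theta)$ and forcing $-\bar u\cdot\nabla\bar u+\mu(\bar\theta)\nabla\bar\theta$, and $\theta$ solving $\partial_t\theta+\Delta^2\theta=-\bar u\cdot\nabla\bar\theta-\Delta\phi(\bar\theta)$, the latter being a bi-harmonic heat equation to which the smoothing estimates of \S2.3 and \eqref{bi-harm-fund-2} apply. The Moser estimates (Lemma \ref{lem-3}), the commutator estimate (Lemma \ref{lemma-comm}) and the composition estimate \eqref{2-2} together with the local smoothing for the heat and bi-harmonic heat semigroups give the $H^s\to H^{s+1}$ (resp.\ $H^{s+2}$) gain. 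Here $\|\phi''\|_{W^{\sigma,\infty}(J)}$ with $J=[-1+\tfrac14\delta_0,1-\tfrac14\delta_0]$ is a finite constant depending only on $\delta_0$ and $\phi$, by Assumption \ref{assumption-1}; this is exactly where one uses that $\bar\theta$ avoids $\pm1$. Choosing $T$ small depending on the data and on $\delta_0$, these estimates close to give a self-map of a ball in $X_T$, and a parallel set of estimates (now using \eqref{2-3}) on the difference of two iterates gives a contraction — the standard refined Littlewood–Paley estimates promised for Appendix A are what make these bounds quantitatively clean. This yields existence and uniqueness in \eqref{local-space}.

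The remaining point — and the one I expect to be the main obstacle — is propagating \eqref{separate-cond-local-1}. At $t=0$ we have $\|\theta_0\|_{L^\infty}\le 1-\delta_0$; I would split $\theta = e^{-t\Delta^2}\theta_0 + (\theta - e^{-t\Delta^2}\theta_0)$ and control the two pieces separately. By Remark \ref{rmk-bound-sep}, shrinking $T\le T_1$ makes $\|e^{-t\Delta^2}\theta_0\|_{L^\infty}\le\|\theta_0\|_{L^\infty}+\tfrac14\delta_0\le 1-\tfrac34\delta_0$. For the Duhamel remainder $\theta - e^{-t\Delta^2}\theta_0$, which solves a bi-harmonic heat equation with zero data and forcing $-u\cdot\nabla\theta-\Delta\phi(\theta)$, I would estimate its $L^\infty$ norm by its $H^s$ norm ($s>1$ embeds into $L^\infty$) times a positive power of $T$ coming from the time integration in Duhamel's formula against the semigroup; making $T$ still smaller forces this to be $\le\tfrac12\delta_0$, hence $\|\theta\|_{L^\infty([0,T]\times\mathbb{T}^2)}\le 1-\tfrac14\delta_0$. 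The delicate bookkeeping is that the constant in the $H^s$-bound of the Duhamel term itself depends on $\|\phi''\|_{W^{\sigma,\infty}(J)}$, hence on the very interval $J$ whose validity we are trying to establish; this circularity is broken by a continuity argument: one first proves the solution exists on some $[0,T']$ with $\theta$ valued in the slightly larger interval $[-1+\tfrac18\delta_0,1-\tfrac18\delta_0]$ (open condition, so it holds on a nonempty time interval by continuity of $t\mapsto\theta(t)\in H^s\hookrightarrow C$), uses that interval to get the uniform Sobolev bounds, and then re-runs the $L^\infty$ splitting above to conclude the strictly better bound $1-\tfrac14\delta_0$, so the set of times where \eqref{separate-cond-local-1} holds is open and closed in $[0,T]$ and equals $[0,T]$ after possibly shrinking $T$ once more.
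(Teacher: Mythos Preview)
Your proposal is correct and the core mechanism for propagating the separation bound --- splitting $\theta=e^{-t\Delta^2}\theta_0+(\theta-e^{-t\Delta^2}\theta_0)$, invoking Remark~\ref{rmk-bound-sep} for the first piece and an $H^s\hookrightarrow L^\infty$ smallness-in-$T$ estimate for the second, then closing the circularity by a bootstrap on a slightly larger interval --- is exactly what the paper does as well.

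Where you diverge is in the \emph{construction} step. The paper does not run a Picard/contraction scheme; instead it uses Friedrich's frequency cut-off regularization: with $P_n=\mathcal{F}^{-1}\mathbf{1}_{B(0,n)}\mathcal{F}$ one writes a fully nonlinear approximate system \eqref{app-NSCH-2}--\eqref{app-NSCH-3} which is an ODE in the finite-dimensional space $L^2_n$, solved by Cauchy--Lipschitz. The splitting $u_n=u_n^L+\bar u_n$, $\theta_n=\theta_n^L+\bar\theta_n$ is then applied to these smooth approximants, and the separation bound plus uniform $H^s$ bounds are obtained by direct energy estimates and a bootstrap on the quantity $E_n(t)$ (equations \eqref{3-89-2}--\eqref{3-90-6}); convergence is by Aubin--Lions compactness, and continuity in time requires a separate argument via the Chemin--Lerner spaces $\widetilde L^\infty_T(H^s)$ and the Littlewood--Paley commutator Lemma~\ref{lem-pressure-1} of the Appendix. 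Your contraction approach is more direct and gives $C([0,T];H^s)$ for free from the fixed-point space, but in exchange you must supply maximal regularity for the linearized Stokes problem $\partial_t u-\operatorname{div}(2\nu(\bar\theta)Du)+\nabla p=F$ with variable top-order coefficient; you correctly point to Lemma~\ref{lemma-comm} for this, though it is worth noting that the paper's energy/compactness route sidesteps having to state such a linear well-posedness result explicitly.
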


\begin{proof}
The proof is based on the energy method. We divide it into several steps.

\textbf{Step1: Construction of an approximate solution sequence}.

We shall first use the classical Friedrich's regularization
method to construct the approximate solutions to \eqref{NSCH-2}.
In order to do so, let us define the sequence of frequency cut-off operators $\suite P
n \N$ by
 \[
 P_{n}a \eqdefa \mathcal{F}^{-1}\bigl({\bf{1}}_{B(0,n)}\widehat a\bigr)
 \]
 and we define $(u_n,\, \theta_n)$ via
\begin{equation} \label{app-NSCH-2}
\begin{cases}
&\partial_t u_n+P_n \mathbb{P}(P_n u_n\cdot\nabla P_n u_n)-P_n \mathbb{P} \nabla \cdot\bigg(2\,\nu(P_n\theta_n) D(P_nu_n)\bigg)\\
&\qquad\qquad\qquad\qquad\qquad\qquad\qquad\qquad\qquad =-P_n \mathbb{P}(\mu_n \nabla P_n\theta_n),
\\
&\partial_t\theta_n+P_n(P_n u_n\cdot\nabla P_n\theta_n)=P_n\Delta \mu_n,
\\
&\nabla \cdot u_n=0,\quad \mu_n=P_n \phi(P_n\theta_n)-\Delta P_n\theta_n,
\\
&u_n(x,0)=P_n u_0(x), \quad \theta_n(x,0)=P_n \theta_0(x).
\end{cases}
\end{equation}
where ${\bf{1}}_{B(0,n)}$ is a characteristic function on the ball $B(0,n)$ centered at the origin with radius $n$ with $n \in \mathbb{N}$, and $\mathbb{P}$ denotes Leray's projection operator, which is given by $\mathbb{P}=(\delta_{jk}+R_jR_k)_{1 \leq j, \, k \leq 2}$
with Riesz transform $R_j$ defined by $\mathcal{F}(R_jf )(\xi) =\frac{i\xi_j}{|\xi|}\mathcal{F}(f)(\xi)$, and then $P_n \mathbb{P} = \mathbb{P}P_n$.

Without loss of generality, we restrict $n \geq n_0$ in what follows, where we choose the integer $n_0$ so large that
\begin{equation*} \label{restrict-n}
\bigg(\sum_{|m| \geq n_0} |m|^{-2s}\bigg)^{\frac{1}{2}} \bigg(\sum_{|m| \geq n_0} |m|^{2s}|\widehat{\theta}_0|^2\bigg)^{\frac{1}{2}}  \leq \frac{1}{4}\delta_0,
\end{equation*}
which implies that for any $n \geq n_0$
\begin{equation} \label{restrict-n-1}
\|P_{n}\theta_0-\theta_0\|_{L^{\infty}}\leq  \bigg(\sum_{|m| \geq n_0} |m|^{-2s}\bigg)^{\frac{1}{2}}  \bigg(\sum_{|m| \geq n_0} |m|^{2s}|\widehat{\theta}_0|^2\bigg)^{\frac{1}{2}} \leq \frac{1}{4}\delta_0.
\end{equation}

Because of properties of $L^2$ and $L^1$ functions the Fourier
transform of which are supported in the ball $B(0,n)$, the system \eqref{app-NSCH-2}
 appears to be an ordinary differential equation in the space
\begin{equation*}
 L^2_{n}\eqdefa \Bigl\{ a\in L^2(\mathbb{T}^2):\, \Supp \widehat a \subset B(0,n)\Bigr\}.
\end{equation*}
Then the Cauchy-Lipschitz theorem ensures that the
system \eqref{app-NSCH-2} has a unique solution $(u_n, \theta_n) \in C([0, T_n]; L^2(\mathbb{T}^2))$ for some $ T_n > 0$. Note that
$P_n^2=P_n$, $(P_nu_n, P_n\theta_n)$ is also a solution of \eqref{app-NSCH-2}. Thus the uniqueness of the solution implies that $(P_nu_n, P_n\theta_n)= (u_n, \theta_n)$ and the solution $(u_n, \theta_n)$ is smooth. Hence, the approximate system \eqref{app-NSCH-2} can be rewritten as
\begin{equation} \label{app-NSCH-3}
\begin{cases}
&\partial_t u_n+P_n \mathbb{P}(u_n\cdot\nabla  u_n)-P_n \mathbb{P} \nabla \cdot\bigg(2\,\nu(\theta_n) D(u_n)\bigg)=-P_n \mathbb{P}(\mu_n \nabla \theta_n),
\\
&\partial_t\theta_n+P_n(u_n\cdot\nabla \theta_n)=\Delta \mu_n,
\\
&\nabla \cdot u_n=0,\quad \mu_n=P_n\phi(\theta_n)-\Delta \theta_n,
\\
&u_n|_{t=0}=P_n u_0, \quad \theta_n|_{t=0}=P_n \theta_0.
\end{cases}
\end{equation}

\textbf{Step2:  Uniform estimates to the approximate solutions}\\
 Denote ${T}^{\ast}_n$ by the maximal existence time of the solution $(u_n, \theta_n)$, then, we first repeat the argument in the proof of Lemma \ref{lem-unif-u-1} to find
\begin{equation}\label{app-theta-bound-1}
\|\theta_n(t)\|_{L^\infty([0, T_n^\ast)\times \mathbb{T}^2)}\leq  1.
\end{equation}

Our goal in this step is to prove that there exists a positive time $0 < T< \inf_{n \in \mathbb{N}}{T}^{\ast}_n$ such that
\begin{equation}\label{app-bound-1}
\|\theta_n\|_{L^\infty([0, T]\times \mathbb{T}^2)}\leq  1-\frac{\delta_0}{4},
\end{equation}
and $(u_n, \theta_n)$ is uniformly bounded in the space
\begin{equation*}
\begin{split}
&\bigg(C([0,T]; H^s(\mathbb{T}^2))\cap (L^2([0,T]; H^{s+1}(\mathbb{T}^2)) \bigg) \\
&\qquad \times \bigg(C([0,T]; H^s(\mathbb{T}^2))\cap L^2([0,T]; H^{s+2}(\mathbb{T}^2)))\bigg).
\end{split}
\end{equation*}
The inequality \eqref{app-bound-1} allows us to reduce the singular equations \eqref{app-NSCH-3} (arising from the singular function $\phi$) to a regular problem, which plays a key role in what follows.

To obtain \eqref{app-bound-1}, we consider \eqref{app-NSCH-3} as a perturbation of its corresponding linear equations. For this, let's first define $u^n\eqdefa u^L_n+\bar u_n$, $\theta^n\eqdefa \theta^L_n+\bar\theta_n$, where $u^L_n(t)\eqdefa e^{t \nu(0)\Delta}P_n u_0$, $\theta^L_n(t)\eqdefa e^{-t \Delta^2}P_n\theta_0$. Then we may rewrite \eqref{app-NSCH-3} as the following $(\bar u^n, \bar\theta_n)$ equations
\begin{equation} \label{app-NSCH-4}
\begin{cases}
&\partial_t \bar u_n-P_n \mathbb{P} \nabla \cdot\bigg(2\,\nu(\theta^L_n+\bar\theta_n) D(\bar u_n)\bigg)=F_n(u^L_n, \theta_n^L, \bar u_n, \bar\theta_n),\\
&\partial_t\bar\theta_n+\Delta^2 \bar\theta_n=H_n(u^L_n, \theta_n^L, \bar u_n, \bar\theta_n),
\\
&\nabla \cdot \bar u_n=0,\\
&\bar u_n|_{t=0}=0, \quad \bar\theta_n|_{t=0}=0
\end{cases}
\end{equation}
with
\begin{equation}\label{3-f-h}
\begin{split}
&F_n(u^L_n, \theta_n^L, \bar u_n, \bar\theta_n):=-P_n \mathbb{P}\nabla\cdot (\bar u_n\otimes \bar u_n+ u^L_n\otimes \bar u_n)-P_n \mathbb{P} (\bar u_n\cdot \nabla  u^L_n+u^L_n\cdot\nabla u^L_n)\\
&\qquad -P_n \mathbb{P}((P_n\phi(\theta^L_n+\bar\theta_n)-\Delta \bar\theta_n) \nabla \theta^L_n)-P_n \mathbb{P}(P_n\phi(\theta^L_n+\bar\theta_n)\, \nabla \bar\theta_n)\\
&\qquad +P_n \mathbb{P} \nabla \cdot\bigg(2\,(\nu(\theta^L_n+\bar\theta_n)-\nu(0)) D(u^L_n)\bigg),\\
&H_n(u^L_n, \theta_n^L, \bar u_n, \bar\theta_n):=-P_n \nabla \cdot (\bar u_n\, \bar\theta_n+u^L_n\, \bar\theta_n+\bar u_n\,\theta^L_n+ u^L_n\, \theta^L_n)+\Delta P_n\phi(\theta^L_n+\bar\theta_n).
\end{split}
\end{equation}
From Remark \ref{rmk-bound-sep} and \eqref{restrict-n-1}, one can get, there is a positive time $T_1$ (independent of $n$) such that
\begin{equation}\label{app-bound-sepa-1}
\begin{split}
&\|\theta^L_n(t)\|_{L^\infty([0, T_1] \times \mathbb{T}^2)} \leq \|\theta^L_n(t)-P_n\theta_0\|_{L^\infty([0, T_1] \times \mathbb{T}^2)}+ \|P_{n}\theta_0\|_{L^\infty(\mathbb{T}^2)}\\
&\leq \|\theta^L_n(t)-P_n\theta_0\|_{L^\infty([0, T_1] \times \mathbb{T}^2)}+ \|P_{n}\theta_0-\theta_0\|_{L^\infty(\mathbb{T}^2)}+\|\theta_0\|_{L^\infty(\mathbb{T}^2)}\leq 1-\frac{1}{2}\delta_0
\end{split}
\end{equation}
for $\forall \, t\in [0, T_1]$.
Moreover, it is easy to find
\begin{equation}\label{3-56-1}
\begin{split}
&\|\theta^L_n\|_{L^\infty([0, +\infty); H^s(\mathbb{T}^2)}+\|\Delta\theta^L_n\|_{L^2([0, +\infty); H^s(\mathbb{T}^2)}\leq C\|\theta_0\|_{H^s},\\
&\|u^L_n\|_{L^\infty([0, +\infty); H^s(\mathbb{T}^2)}+\|\nabla u^L_n\|_{L^2([0, +\infty); H^s(\mathbb{T}^2)}\leq C\|u_0\|_{H^s}.
\end{split}
\end{equation}
Taking the $H^s(\mathbb{T}^2)$ inner product of the first equation of \eqref{app-NSCH-4} with $\bar u_n$, we have
\begin{equation}\label{3-82}
\begin{split}
&\frac{1}{2}\frac{d}{dt} \|\bar u_n\|_{H^s}^2 +\int_{\mathbb{T}^2} \Lambda^s \bigg( 2\nu(\theta^L_n+\bar\theta_n) D(\bar u_n)\bigg): \Lambda^s \nabla u_n \,dx =\int_{\mathbb{T}^2} \Lambda^s F_n\cdot \Lambda^s u_n \,dx.
\end{split}
\end{equation}
By using the commutator process, we find that
\begin{equation*}\label{3-83}
\begin{split}
&\int_{\mathbb{T}^2} \Lambda^s \bigg( 2\,\nu(\theta^L_n+\bar\theta_n) D(\bar u_n)\bigg): \Lambda^s \nabla \bar u_n \,dx
\\
&=\int_{\mathbb{T}^2}  2\,\nu(\theta^L_n+\bar\theta_n)D( \Lambda^s \bar u_n): \nabla \Lambda^s \bar u_n\, dx+2\int_{\mathbb{T}^2} [\Lambda^s,\nu(\theta^L_n+\bar\theta_n)]D(\bar u_n): \nabla \Lambda^s \bar u_n\,dx,
\end{split}
\end{equation*}
which along with Lemma \ref{lem-4} yields
\begin{equation}\label{3-83-0}
\begin{split}
&\int_{\mathbb{T}^2} \Lambda^s \bigg( 2\,\nu(\theta^L_n+\bar\theta_n) D(\bar u_n)\bigg): \Lambda^s \nabla \bar u_n \,dx
\\
&\geq \int_{\mathbb{T}^2}  \nu(\theta_n)|D( \Lambda^s \bar u_n)|^2dx
\\
&-C(\|\nu(\theta^L_n+\bar\theta_n)-\nu(0)\|_{H^s}\|D(\bar u_n)\|_{L^\infty}+\|\nabla \nu(\theta^L_n+\bar\theta_n)\|_{L^\infty}\| D(\bar u_n)\|_{H^{s-1}})\|\nabla\bar u_n\|_{H^s}\\
&\geq c\nu_1\|\nabla \bar u_n\|_{H^{s}}^2-C(1+\|\theta_n\|_{L^\infty})^{[s]+2}\\
&\qquad\qquad\qquad\times((\|\theta^L_n\|_{H^s}^2+\|\bar\theta_n\|_{H^s}^2) \|\nabla\bar u_n\|_{L^\infty}+ \|\nabla (\theta^L_n+\bar\theta_n)\|_{L^\infty}\|\bar u_n\|_{H^s})\|\nabla\bar u_n\|_{H^s}.
\end{split}
\end{equation}
Hence, from the interpolation inequality $\|\nabla f\|_{L^{\infty}(\mathbb{T}^2)} \leq C \|f\|_{H^1}^{\frac{s-1}{s}}\|\nabla f\|_{H^s}^{\frac{1}{s}}$, it leads to
\begin{equation*}\label{3-83-1}
\begin{split}
&\int_{\mathbb{T}^2} \Lambda^s \bigg( 2\,\nu(\theta^L_n+\bar\theta_n) D(\bar u_n)\bigg): \Lambda^s \nabla \bar u_n \,dx
\\
&\geq \frac{31}{32}c\nu_1\|\nabla u_n\|_{H^{s}}^2-C \|\bar u_n\|_{H^s}^2(\|\nabla \theta^L_n\|_{H^s}^2+ \|\nabla\bar \theta_n\|_{H^s}^2)-C\|\bar\theta_n\|_{H^s}^2 \|\nabla\bar u_n\|_{H^s}\\
&\qquad \qquad\qquad\qquad-C\|\theta^L_n\|_{H^s}^2 \|\bar u_n\|_{H^1}^{\frac{s-1}{s}}\|\nabla\bar u_n\|_{H^s}^{\frac{1}{s}+1}\\
&\geq \frac{15}{16}c\nu_1\|\nabla u_n\|_{H^{s}}^2-C \|\bar u_n\|_{H^s}^2(\|\nabla \theta^L_n\|_{H^s}^2+ \|\nabla\bar \theta_n\|_{H^s}^2+\|\theta^L_n\|_{H^s}^{\frac{4s}{s-1}})-C\|\bar\theta_n\|_{H^s}^4.
\end{split}
\end{equation*}
Hence, we obtain
\begin{equation}\label{3-83-2}
\begin{split}
&\int_{\mathbb{T}^2} \Lambda^s \bigg( 2\,\nu(\theta^L_n+\bar\theta_n) D(\bar u_n)\bigg): \Lambda^s \nabla \bar u_n \,dx
\\
&\geq \frac{15}{16}c\nu_1\|\nabla u_n\|_{H^{s}}^2-C \|\bar u_n\|_{H^s}^2(\|\nabla \theta^L_n\|_{H^s}^2+ \|\nabla\bar \theta_n\|_{H^s}^2+1)-C\|\bar\theta_n\|_{H^s}^4.
\end{split}
\end{equation}
Let's now estimate $\int\Lambda^s F_n\cdot \Lambda^s u_n \,dx$. We will split it into six terms by the definition of $F_n$ in \eqref{3-f-h}, and then bound them step by step.
Thanks to H\"{o}lder's inequality and integration by parts, one can get
\begin{equation}\label{3-84}
\begin{split}
&|\int_{\mathbb{T}^2} \Lambda^s\nabla\cdot(\bar u_n \otimes \bar u_n )\cdot \Lambda^s \bar u_n \,dx|\leq \|\bar u_n \otimes  \bar u_n\|_{H^s}\|\nabla \bar u_n\|_{H^s}\leq \frac{c\nu_1}{16}\|\nabla \bar u_n\|_{H^{s}}^2+C\|\bar u_n\|_{H^s}^4,
\end{split}
\end{equation}
\begin{equation}\label{3-84-1}
\begin{split}
&|\int_{\mathbb{T}^2} \Lambda^s\nabla\cdot(u^L_n \otimes \bar u_n )\cdot \Lambda^s \bar u_n \,dx|\leq \frac{c\nu_1}{16}\|\nabla \bar u_n\|_{H^{s}}^2+C\|\bar u_n\|_{H^s}^2\|u^L_n\|_{H^s}^2,
\end{split}
\end{equation}
and
\begin{equation}\label{3-84-2}
\begin{split}
&|\int_{\mathbb{T}^2} \Lambda^s(\bar u_n\cdot \nabla  u^L_n+u^L_n\cdot\nabla u^L_n)\cdot \Lambda^s \bar u_n \,dx|\leq \|\nabla  u^L_n\|_{H^{s}}^2+ C(\|\bar u_n\|_{H^s}^2+\|u^L_n\|_{H^{s}}^2)\|\bar u_n\|_{H^s}^2.
\end{split}
\end{equation}
By using Lemma \ref{lem-4}, we obtain
\begin{equation}\label{3-84-3}
\begin{split}
&|\int_{\mathbb{T}^2} \Lambda^s((P_n\phi(\theta^L_n+\bar\theta_n)-\Delta \bar\theta_n) \nabla \theta^L_n)\cdot \Lambda^s \bar u_n \,dx|\\
&\leq \|\nabla \theta^L_n\|_{H^s}\|\bar u_n\|_{H^s}(\|\phi(\theta^L_n+\bar\theta_n)\|_{H^s}+\|\Delta\bar\theta_n\|_{H^s})\\
& \leq \|\nabla  u^L_n\|_{H^{s}}^2+ C(\|\bar \theta_n\|_{H^s}^2+\|\theta^L_n\|_{H^{s}}^2+\|\Delta\bar\theta_n\|_{H^s}^2)\|\bar u_n\|_{H^s}^2,
\end{split}
\end{equation}
\begin{equation}\label{3-84-4}
\begin{split}
&|\int_{\mathbb{T}^2} \Lambda^s(P_n\phi(\theta^L_n+\bar\theta_n) \nabla \bar\theta_n)\cdot \Lambda^s \bar u_n \,dx|\leq \|\nabla \bar\theta_n\|_{H^s}\|\bar u_n\|_{H^s}\|\phi(\theta^L_n+\bar\theta_n)\|_{H^s}\\
& \leq \|\nabla  \bar\theta_n\|_{H^{s}}^2+ C (\|\bar \theta_n\|_{H^s}^2+\|\theta^L_n\|_{H^{s}}^2)\|\bar u_n\|_{H^s}^2,
\end{split}
\end{equation}
and
\begin{equation}\label{3-84-5}
\begin{split}
&|\int_{\mathbb{T}^2} \Lambda^s\nabla \cdot\bigg(2\,(\nu(\theta^L_n+\bar\theta_n)-\nu(0)) D(u^L_n)\bigg) \cdot \Lambda^s \bar u_n \,dx|\\
&\leq \frac{1}{16}c\nu_1\|\nabla \bar u_n\|_{H^{s}}^2+ C\|(\nu(\theta^L_n+\bar\theta_n)-\nu(0))\|_{H^s}^2\| \nabla u^L_n\|_{H^s}^2\\
&\leq \frac{1}{16}c\nu_1\|\nabla \bar u_n\|_{H^{s}}^2+ C(\|\theta^L_n\|_{H^s}^2+\|\bar\theta_n\|_{H^s}^2)\| \nabla u^L_n\|_{H^s}^2.
\end{split}
\end{equation}
Thus, it follows from \eqref{3-84}-\eqref{3-84-5} that
\begin{equation}\label{3-84-6}
\begin{split}
&|\int_{\mathbb{T}^2} \Lambda^s F_n\cdot \Lambda^s u_n \,dx|\leq \frac{3}{16}c\nu_1\|\nabla \bar u_n\|_{H^{s}}^2+\|\nabla  \bar\theta_n\|_{H^{s}}^2+ C(\|\bar u_n\|_{H^s}^2+\|u^L_n\|_{H^{s}}^2)\|\bar u_n\|_{H^s}^2\\
&\quad + C(\|\bar \theta_n\|_{H^s}^2+\|\theta^L_n\|_{H^{s}}^2+\|\Delta\bar\theta_n\|_{H^s}^2)\|\bar u_n\|_{H^s}^2+ C(1+\|\theta^L_n\|_{H^s}^2+\|\bar\theta_n\|_{H^s}^2)\| \nabla u^L_n\|_{H^s}^2.
\end{split}
\end{equation}
Inserting \eqref{3-83-2} and \eqref{3-84-6} into \eqref{3-82} ensures
\begin{equation}\label{3-85}
\begin{split}
&\frac{d}{dt} \|\bar u_n\|_{H^s}^2+\frac{3}{2} c\nu_1\|\nabla\bar u_n\|_{H^{s}}^2\\
&\leq C(\|\bar u_n\|_{H^s}^2+\|\bar\theta_n\|_{H^s}^2)(\| \nabla u^L_n\|_{H^s}^2+\|\nabla \theta^L_n\|_{H^s}^2+1)\\
&\qquad+ C\|\bar u_n\|_{H^s}^2(\|\bar u_n\|_{H^s}^2+\|\bar \theta_n\|_{H^s}^2+\|\Delta\bar\theta_n\|_{H^s}^2) +C\|\bar\theta_n\|_{H^s}^4+C\| \nabla u^L_n\|_{H^s}^2
\end{split}
\end{equation}
On the other hand, applying the operator $\Lambda^s$ to the second equation in \eqref{app-NSCH-4}, we obtain
\begin{equation}\label{3-86}
\begin{split}
\partial_t\Lambda^s\bar \theta_n+\Delta^2\Lambda^s\bar\theta_n=\Lambda^s H_n.
\end{split}
\end{equation}
Taking the $L^2$ inner product with $\Lambda^s\theta_n$, we may obtain
\begin{equation}\label{3-87}
\begin{split}
&\frac{1}{2}\frac{d}{dt}\|\bar\theta_n\|^2_{H^s}+\|\Delta\bar\theta\|^2_{H^s}=\int_{\mathbb{T}^2}  \Lambda^sH_n\, \Lambda^s\bar\theta_n\, dx.
\end{split}
\end{equation}
In order to estimate $\int \Lambda^sH_n\, \Lambda^s\bar\theta_n\, dx$, we deduce, according to the definition of $H_n$, that
\begin{equation*}
\begin{split}
&|\int_{\mathbb{T}^2} \Lambda^s\nabla\cdot(\bar u_n\, \bar\theta_n+u^L_n\, \bar\theta_n+\bar u_n\,\theta^L_n+ u^L_n\, \theta^L_n)\cdot \Lambda^s \bar \theta_n \,dx|\leq \|\bar u_n \otimes  \bar u_n\|_{H^s}\|\nabla \bar u_n\|_{H^s}\\
& \leq \|\nabla \bar \theta_n\|_{H^{s}}^2+ C(\|\bar u_n\|_{H^s}^2+\|u^L_n\|_{H^s}^2)(\|\bar \theta_n\|_{H^s}^2+\|\theta^L_n\|_{H^s}^2)
\end{split}
\end{equation*}
and
\begin{equation*}
\begin{split}
&|\int_{\mathbb{T}^2}  \Delta\phi(\theta^L_n+\bar\theta_n)\,\Lambda^s\bar\theta_n\, dx|=|\int_{\mathbb{T}^2}  \phi(\theta^L_n+\bar\theta_n)\,\Lambda^s\Delta\bar\theta_n\, dx|\\
&\leq  \frac{1}{4}\|\Delta\bar \theta_n\|_{H^{s}}^2+ C\|\phi(\theta^L_n+\bar\theta_n)\|_{H^s}\leq  \frac{1}{4}\|\Delta\bar \theta_n\|_{H^{s}}^2+ C(\|\bar\theta_n\|_{H^s}^2+\|\theta^L_n\|_{H^s}^2).
\end{split}
\end{equation*}
From these, it follows from \eqref{3-56-1} and the interpolation inequality
\begin{equation}\label{interpo-ineq-1}
\|\nabla \bar \theta_n\|_{H^{s}}^2\leq \frac{1}{8}\|\Delta\bar \theta_n\|_{H^{s}}^2+C\|\bar \theta_n\|_{H^{s}}^2
\end{equation}
that
\begin{equation}\label{3-87-2}
\begin{split}
&|\int_{\mathbb{T}^2}  \Lambda^sH_n\, \Lambda^s\bar\theta_n\, dx|\leq   \frac{3}{8}\|\Delta\bar \theta_n\|_{H^{s}}^2+ C(\|\bar u_n\|_{H^s}^2+1)(\|\bar \theta_n\|_{H^s}^2+1).
\end{split}
\end{equation}
Inserting \eqref{3-87-2} into \eqref{3-87} leads to
\begin{equation}\label{3-88}
\begin{split}
&\frac{d}{dt}\|\bar\theta_n\|_{H^s}^2+\frac{5}{4}\|\Delta\bar\theta\|_{H^s}^2\leq  C(1+\|\bar u_n\|_{H^s}^2)(1+\|\bar \theta_n\|_{H^s}^2),
\end{split}
\end{equation}
which along with \eqref{3-85} and \eqref{interpo-ineq-1} gives rise to
\begin{equation}\label{3-89}
\begin{split}
&\frac{d}{dt} (\|\bar u_n\|_{H^s}^2+\|\bar\theta_n\|_{H^s}^2)+ c\nu_1\|\nabla\bar u_n\|_{H^{s}}^2+\|\Delta\bar\theta_n\|_{H^s}^2\\
&\leq C(\|\bar u_n\|_{H^s}^2+\|\bar\theta_n\|_{H^s}^2)(\| \nabla u^L_n\|_{H^s}^2+\|\nabla \theta^L_n\|_{H^s}^2+1)\\
&\qquad+ C\|\bar u_n\|_{H^s}^2(\|\bar u_n\|_{H^s}^2+\|\bar \theta_n\|_{H^s}^2+\|\Delta\bar\theta_n\|_{H^s}^2) +C\|\bar\theta_n\|_{H^s}^4+C(1+\| \nabla u^L_n\|_{H^s}^2).
\end{split}
\end{equation}
Therefore, defining
\begin{equation*}\label{3-89-1}
\begin{split}
&E_n(t) := \sup_{\tau \in [0, t]} (\|u_n(\tau)\|_{H^s}^2+\|\theta_n(\tau)\|_{H^s}^2)+\|\nabla\bar u_n\|_{L^2([0, t]; H^{s})}^2+\|\Delta\bar\theta_n\|_{L^2([0, t]; H^{s})}^2,
\end{split}
\end{equation*}
it follows
\begin{equation}\label{3-89-2}
\begin{split}
&E_n(t)\leq C_0\,E_n(t)(\| \nabla u^L_n\|_{L^2([0, t]; H^{s})}^2+\|\nabla \theta^L_n\|_{L^2([0, t]; H^{s})}^2+t)\\
&\qquad \qquad\qquad + C_0\,E_n(t)^2 +C_0(\| \nabla u^L_n\|_{L^2([0, t]; H^{s})}^2+t)
\end{split}
\end{equation}
with the constant $C_0 \geq 1+\beta^2$, where the constant $\beta$ satisfies $\|f\|_{L^{\infty}(\mathbb{T}^2)} \leq \beta \|f\|_{H^s(\mathbb{T}^2)}$ (for $\forall \, f \in H^s(\mathbb{T}^2)$ with $s>1$).

Note that, from \eqref{3-56-1}, there exists a position time $T_0 \in (0, \min\{T_1, \frac{\delta_0^2}{2^7C_0^2}\}]$ (with $T_1$ in Remark \ref{rmk-bound-sep}) so small that
\begin{equation}\label{3-89-3}
\begin{split}
&\| \nabla u^L_n\|_{L^2([0, T_0]; H^{s})}^2+\|\nabla \theta^L_n\|_{L^2([0, T_0]; H^{s})}^2\leq \frac{\delta_0^2}{2^7C_0^2}.
\end{split}
\end{equation}

Therefore, by the bootstrap argument, we may claim that there is a positive time $T=T(u_0,\theta_0)(\leq \min\{T_0,\,T^\ast_n\})$ independent of $n$ such that for all $n$,
\begin{equation} \label{3-90-5}
\begin{split}
&\sup_{0\leq t\leq T} E_n(t)\leq \frac{\delta_0^2}{16C_0}.
\end{split}
\end{equation}
In effect, we first denote ${T}^{\ast}_n$ by the maximal existence time of the solution $(u_n, \theta_n)$ and define
\begin{equation} \label{3-90-1}
\widetilde{T}^{\ast}_n:=\sup \{t \in [0, {T}^{\ast}_n):\, \sup_{0\leq \tau \leq t}E_n(\tau)\leq \frac{\delta_0^2}{16C_0} \},
\end{equation}
From \eqref{3-89-2}, we find that for any $t \in [0, \widetilde{T}^{\ast}_n)$
\begin{equation*}\label{3-90-2}
\begin{split}
&E_n(t)\leq \frac{\delta_0}{2^6C_0}(\| \nabla u^L_n\|_{L^2([0, t]; H^{s})}^2+\|\nabla \theta^L_n\|_{L^2([0, t]; H^{s})}^2+t)\\
&\qquad \qquad\qquad + \frac{\delta_0}{16}\,E_n(t) +C_0(\| \nabla u^L_n\|_{L^2([0, t]; H^{s})}^2+t),
\end{split}
\end{equation*}
and then
\begin{equation*}\label{3-90-3}
\begin{split}
&E_n(t)\leq \frac{1}{60}\delta_0(\| \nabla u^L_n\|_{L^2([0, t]; H^{s})}^2+\|\nabla \theta^L_n\|_{L^2([0, t]; H^{s})}^2+t) +\frac{16}{15}C_0(\| \nabla u^L_n\|_{L^2([0, t]; H^{s})}^2+t),
\end{split}
\end{equation*}
which along with \eqref{3-89-3} implies that $\widetilde{T}^{\ast}_n \geq T_0$. Otherwise, we have
\begin{equation*}\label{3-90-4}
\begin{split}
&E_n(t)\leq \delta_0\frac{\delta_0^2}{32C_0^2} +2C_0\frac{\delta_0^2}{32C_0^2} \leq \frac{3}{4}\frac{\delta_0^2}{16C_0} , \quad \forall \, t \in [0, \widetilde{T}^{\ast}_n),
\end{split}
\end{equation*}
which contradicts with the definition of $\widetilde{T}^{\ast}_n $, \eqref{3-90-1}. Thus, the approximate solution $(u_n, \theta_n)$ exists on
$[0, T_0]$ and satisfies \eqref{3-90-5},
which along with \eqref{3-89} implies that
\begin{equation}\label{3-90-6}
\begin{split}
&\sup_{t \in [0, T_0]} \|\bar\theta_n(t)\|_{L^\infty}\leq \beta E_n(T_0)^{\frac{1}{2}}\leq \frac{\delta_0}{4}.
\end{split}
\end{equation}
Therefore, combining \eqref{3-90-6} with \eqref{separate-condition-1} and \eqref{app-bound-sepa-1} yields \eqref{app-bound-1}.

Moreover, we get from the first equations in \eqref{app-NSCH-3} that
\begin{equation*} \label{3-90-7}
\begin{split}
&\|\partial_t u_n\|_{H^{s-1}}\leq \|u_n\cdot\nabla  u_n\|_{H^{s-1}}+\|\nabla \cdot(2\,\nu(\theta_n) D(u_n))\|_{H^{s-1}}+\|\mu_n \nabla \theta_n\|_{H^{s-1}}
\\
&\leq C(\|u_n\|_{H^{s}}^2+\|\nabla u_n\|_{H^{s}}+\|\nu(\theta_n)-\nu(0)\|_{L^{\infty}} \|\nabla u_n\|_{H^{s}}\\
&\qquad+\|\nabla u_n\|_{L^{\infty}} \|\nu(\theta_n)-\nu(0)\|_{H^{s}}+\|\phi(\theta_n)\|_{L^{\infty}}\| \nabla \theta_n\|_{H^{s-1}}+\|\phi(\theta_n) \|_{H^{s-1}}\|\nabla \theta_n\|_{L^{\infty}}\\
&\qquad+\|\Delta \theta_n \|_{L^{\infty}}\|\nabla \theta_n\|_{H^{s-1}}+\|\nabla \theta_n\|_{L^{\infty}}\|\Delta \theta_n \|_{H^{s-1}})
\\
&\leq C(\|u_n\|_{H^{s}}^2+\|\nabla u_n\|_{H^{s}}+\|\nabla u_n\|_{H^{s}} \|\theta_n\|_{H^{s}}+\| \nabla \theta_n\|_{H^{s}}\\
&\qquad+\|\theta_n\|_{H^{s}}\|\nabla \theta_n\|_{H^s}+\|\Delta \theta_n \|_{H^s}\|\theta_n\|_{H^{s}}+\|\theta_n\|_{H^s}\|\Delta \theta_n \|_{H^{s}}),
\end{split}
\end{equation*}
which along with \eqref{3-90-6} gives rise to
\begin{equation*}\label{3-90-7}
\{\partial_t u_n\}_{n \in \mathbb{N}}\quad\mbox{is uniformly bounded in} \quad L^2([0,T];H^{s-1}(\mathbb{T}^2)).
\end{equation*}
On the other hand, taking the $H^{s-2}$ inner product of the second equations in \eqref{app-NSCH-3} with $\partial_t\theta_n$ gives rise to
\begin{equation*}\label{3-90-8}
\begin{split}
&\|\partial_t\theta_n\|_{H^{s-2}}\leq  \|u_n\cdot\nabla \theta_n\|_{H^{s-2}}+\|\Delta \phi(\theta_n)\|_{H^{s-2}}+\|\Delta^2\theta_n\|_{H^{s-2}}
\\
&\leq C(\|u_n\|_{H^{s}}\|\nabla \theta_n\|_{H^{s}}+\|\theta_n\|_{H^{s}}+\|\Delta\theta_n\|_{H^{s}}),
\end{split}
\end{equation*}
which results from \eqref{3-56-1} and \eqref{3-90-5} that
\begin{equation*}\label{3-90-10}
\{\partial_t \theta_n\}_{n \in \mathbb{N}}\quad\mbox{is uniformly bounded in} \quad L^2([0,T];H^{s-2}(\mathbb{T}^2)).
\end{equation*}
Therefore, we obtain
\begin{equation}\label{3-91}
\begin{split}
&\{(u_n, \theta_n)\}_{n \in \mathbb{N}}\quad\mbox{is uniformly bounded in} \quad \mathbb{C}([0,T];H^s(\mathbb{T}^2)),\\
&\{(\nabla u_n, \Delta\theta_n)\}_{n \in \mathbb{N}} \quad\hbox{is uniformly bounded in} \quad L^2([0,T];H^s(\mathbb{T}^2)),\\
&\{\partial_t u_n\}_{n \in \mathbb{N}}\quad\mbox{is uniformly bounded in} \quad L^2([0,T];H^{s-1}(\mathbb{T}^2)),\\
&\{\partial_t \theta_n\}_{n \in \mathbb{N}}\quad\mbox{is uniformly bounded in} \quad L^2([0,T];H^{s-2}(\mathbb{T}^2)).
\end{split}
\end{equation}

\textbf{Step 4: Convergence}

Thanks to \eqref{3-91} and Aubin-Lions's compactness theorem, there exists a subsequence of $\{(u_n, \theta_n)\}_{\varepsilon>0}$, still denoted by  $\{(u_n, \theta_n)\}_{\varepsilon>0}$, which converges
to some function $(u, \theta)\in \bigg(L^{\infty}([0, T]; H^s(\mathbb{T}^2)) \cap L^{2}([0, T]; H^{s+1}(\mathbb{T}^2))\bigg) \times \bigg(L^{\infty}([0, T]; H^s(\mathbb{T}^2)) \cap L^{2}([0, T]; H^{s+2}(\mathbb{T}^2))\bigg)$ such that
\begin{equation*}\label{3-92}
\begin{split}
&u_n \rightarrow u \quad \mbox{in} \quad L^{2}([0, T]; H^{s+1}(\mathbb{T}^2)),\\
&\theta_n \rightarrow \theta \quad \mbox{in} \quad L^{2}([0, T]; H^{s+2}(\mathbb{T}^2)).
\end{split}
\end{equation*}
Then passing to limit in \eqref{app-NSCH-3}, it is easy to see that $(u, \, \theta)$ satisfies \eqref{NSCH-2} in the weak sense. Moreover, there hold \eqref{separate-cond-local-1} from \eqref{app-bound-1} and
\begin{equation}\label{3-93}
\begin{split}
&\|(u,\,\theta)\|_{L^{\infty}([0, T]; H^s(\mathbb{T}^2)}+ \|(\nabla u, \, \Delta\theta)\|_{L^{2}([0, T]; H^{s})}\\
&\qquad\qquad\qquad\qquad\qquad +\|\partial_t u\|_{L^{2}([0, T]; H^{s-1})}+\|\partial_t \theta\|_{L^{2}([0, T]; H^{s-2})} \leq C.
\end{split}
\end{equation}
In order to get the continuity in time of the solution, we need the refine estimate of $(u, \, \theta)$. For this, we denote (see Appendix)
\begin{equation*}
\begin{split}
&\|f\|_{\widetilde{L}^{\infty}([0, t]; H^s)}:= \|2^{qs}\|\Delta_qf\|_{L^{\infty}[[0, t]; L^2]}\|_{\ell^2(q \in \mathbb{N}\cup\{-1\})},\\
 &\|f\|_{\widetilde{L}^{1}([0, t]; H^s)}:= \|2^{qs}\|\Delta_qf\|_{L^{1}[[0, t]; L^2]}\|_{\ell^2(q \in \mathbb{N}\cup\{-1\})}.
 \end{split}
\end{equation*}
Apply the operator $\Delta_q$ with $q \in \mathbb{N}\cup\{-1\}$ to the $u$ equations in \eqref{NSCH-2} to find
\begin{equation*}
\begin{split}
&\partial_t \Delta_{q}u+\Delta_{q}(u\cdot\nabla  u)+\nabla \Delta_{q} g- \dive\,(2\nu(\theta) D(\Delta_{q}u))\\
&\qquad\qquad\qquad\qquad\qquad =\dive\,(2[\nu(\theta), \Delta_{q}] D(u))+\Delta_{q}(\phi(\theta) \nabla \theta)-\Delta_{q}(\Delta \theta \nabla \theta).
\end{split}
\end{equation*}
Denote $u^H:=u-{\Delta}_{-1} u$, then one may get
\begin{equation*}
\begin{split}
&\frac{1}{2} \frac{d}{dt} \|{\Delta}_{q} u\|_{L^2}^2+c\nu_1 2^{2q}\|{\Delta}_{q} u^H\|_{L^2}^2 \leq  C \|{\Delta}_{q} u\|_{L^2}\Bigl(\|\Delta_{q}(u\cdot\nabla  u)\|_{L^2}\\
&\qquad\qquad +
2^q\|[\nu(\theta)-\nu(0), \Delta_{q}] D(u)\|_{L^2}+\|\Delta_{q}(\phi(\theta) \nabla \theta)\|_{L^2}+\|\Delta_{q}(\Delta \theta \nabla \theta)\|_{L^2}\|_{L^2}\bigg),
\end{split}
\end{equation*}
which follows that
\begin{equation*}
\begin{split}
&\|{\Delta}_{q} u\|_{L^{\infty}_t(L^2)} +c\nu_1 2^{2q}\|{\Delta}_{q} u^H\|_{L^1_t(L^2)}\leq \|{\Delta}_{q} u_0\|_{L^2} + C \Bigl(\|\Delta_{q}(u\cdot\nabla  u)\|_{L^{1}_t(L^2)}\\
&\qquad +
2^q\|[\nu(\theta)-\nu(0), \Delta_{q}] D(u)\|_{L^{1}_t(L^2)}+\|\Delta_{q}(\phi(\theta) \nabla \theta)\|_{L^{1}_t(L^2)}+\|\Delta_{q}(\Delta \theta \nabla \theta)\|_{L^2}\|_{L^{1}_t(L^2)}\bigg),
\end{split}
\end{equation*}
Thanks to Lemma \ref{lem-pressure-1},
\begin{equation*}
\begin{split}
&\|([\nu(\theta)-\nu(0), \Delta_{q}] D(u))\|_{L^2}^2 \\
&\leq Cc_{q}(t)2^{-q(s+1)}(\|\nu(\theta)-\nu(0)\|_{H^{s+2}}\|u\|_{H^{1}}+\|\nu(\theta)-\nu(0)\|_{H^{2}}\|u\|_{H^{s+1}})
\end{split}
\end{equation*}
with $\sum_{q \geq -1}c_{q}(t)^2 \leq 1$, we may obtain
\begin{equation*}
\begin{split}
&\|u\|_{\widetilde{L}^{\infty}_t(H^s)} +c\|u\|_{\widetilde{L}^{1}_t(H^{s+2})}\leq \|u_0\|_{H^s} + \|u\|_{L^1_t(L^2)}\\
&\,+C\int_0^t \bigg(\|u\cdot \nabla u\|_{H^s}+ \|\nu(\theta)-\nu(0)\|_{H^{s+2}}\|u\|_{H^{1}}\\
&\qquad\qquad\qquad\qquad+\|\nu(\theta)-\nu(0)\|_{H^{2}}\|u\|_{H^{s+1}}+
\|\phi(\theta) \nabla \theta\|_{H^s}+\|\Delta \theta \nabla \theta\|_{H^s}\bigg)\, d\tau.
\end{split}
\end{equation*}
Therefore, it follows from \eqref{separate-cond-local-1}, Lemma \ref{lem-4}, and \eqref{3-93} that
\begin{equation}\label{3-96-10}
\begin{split}
&\|u\|_{\widetilde{L}^{\infty}_T(H^s)}+c\|u\|_{\widetilde{L}^{1}_T(H^{s+2})} \\
&\leq C_T\|u_0\|_{H^s} + C \Bigl(\|u\|_{{L}^{\infty}_T(H^s)}\|\nabla  u\|_{{L}^{1}_T(H^s)}+\|\theta\|_{{L}^{2}_T(H^{s+2})}) \| u\|_{{L}^{2}_T(H^{s+1})}\\
&\qquad\qquad\qquad\qquad\qquad+\|\theta\|_{{L}^{\infty}_T(H^s)} \|\nabla \theta\|_{{L}^{1}_T(H^s)}+\|\Delta \theta\|_{{L}^{2}_T(H^s)}\| \nabla \theta\|_{{L}^{2}_T(H^s)}\bigg)\\
&\leq C(T, u_0, \theta_0).
\end{split}
\end{equation}
Similarly, we may get
\begin{equation*}
\begin{split}
&\|\theta\|_{\widetilde{L}^{\infty}_T(H^s)}+\|\theta\|_{\widetilde{L}^{1}_T(H^{s+4})} \leq C(T, u_0, \theta_0).
\end{split}
\end{equation*}

\textbf{Step 5: Continuity in time of the solution}

Let's now prove the continuity in time of the solution. Indeed, from \eqref{3-96-10}, $\|u\|_{\widetilde{L}^{\infty}_T(H^s)}<C$, so, for any $\varepsilon>0$, one can take $N \in \mathbb{N}$ large enough such that
\begin{equation}\label{3-96-11}
\begin{split}
&\sum_{q \geq N}2^{2qs}\|{\Delta}_{q} u\|_{L^{\infty}_T(L^2)}^2 < \frac{\varepsilon}{4}.
\end{split}
\end{equation}
For any $t \in [0, T]$ and $h$ such that $t +h \in [0, T]$, we deduce from \eqref{3-96-11} and \eqref{3-93} that
\begin{equation*}\label{3-96-12}
\begin{split}
&\|u(t +h)-u(t)\|_{H^s}^2\leq \sum_{-1\leq q \leq N}2^{2qs}\|{\Delta}_{q} (u(t +h)-u(t))\|_{L^2}^2 +\frac{\varepsilon}{2}\\
&\leq \sum_{-1\leq q \leq N}2^{2q} 2^{2q(s-1)}\|{\Delta}_{q} (u(t +h)-u(t))\|_{L^2}^2 +\frac{\varepsilon}{2}\\
&\leq \sum_{-1\leq q \leq N}2^{2q}\|u(t +h)-u(t)\|_{H^{s-1}}^2 +\frac{\varepsilon}{2}\leq \sum_{-1\leq q \leq N}2^{2q} |h|\|\partial_t u\|_{L^2_T(H^{s-1})}^2 +\frac{\varepsilon}{2}\\
&\leq (2+N)2^{2 N} \|\partial_t u\|_{L^2_T(H^{s-1})}^2 \, |h| +\frac{\varepsilon}{2}< \varepsilon
\end{split}
\end{equation*}
for $|h|$ small enough. Hence, $u(t)$ is continuous in $H^s(\mathbb{T}^2)$ for any time $t \in [0, T]$. Similarly, we may get that $\theta(t)$ is also continuous in $H^s(\mathbb{T}^2)$ for any time $t \in [0, T]$.

\textbf{Step 6:  Continuous dependency and uniqueness of the solution}

Let $(u^1, \theta^1)$ and $(u^2,\theta^2)$ be two solutions of \eqref{NSCH-2} and satisfy \eqref{local-space} and \eqref{separate-cond-local-1}.
We denote $\tilde{u}=u^1-u^2, \tilde{\theta}=\theta^1-\theta^2$, and $\tilde{g}=g^1-g^2$. Then $(\tilde{u},\tilde{\theta})$ satisfies
\begin{equation*}
\begin{cases}
&\partial_t\tilde{u}+(\tilde{u}\cdot\nabla)u^1+(u^2\cdot\nabla)\tilde{u}-\dive(\nu(\theta^1)2D\tilde{u})-\dive((\nu(\theta^1)-\nu(\theta^2))2Du^2)+\nabla\tilde{g}
\\
&\qquad\qquad\qquad\qquad=-\Delta\theta^1\nabla\tilde{\theta}-\Delta\tilde{\theta}\nabla\theta^2+\nabla\tilde{\theta}\phi(\theta^1)+\nabla\theta^2(\phi(\theta^1)-\phi(\theta^2)),
\\
&\partial_t\tilde{\theta}+(\tilde{u}\cdot\nabla)\theta^1+(u^2\cdot\nabla)\tilde{\theta}=-\Delta^2\tilde{\theta}+\Delta(\phi(\theta^1)-\phi(\theta^2)).
\end{cases}
\end{equation*}
Taking $H^s(\mathbb{T}^2)$ energy estimate yields
\begin{equation}\label{3-95}
\begin{split}
&\frac{1}{2}\frac{d}{dt}\|(\tilde{u},\tilde{\theta})\|_{H^s}^2+c\nu_1\|\nabla\tilde{u}\|^2_{H^s}+\|\Delta\tilde{\theta}\|^2_{H^s}
\\
&\leq \int_{\mathbb{T}^2}  [\nu(\theta^1)-\nu(0), \Lambda^s]D\tilde{u}:\nabla \Lambda^s\tilde{u}\,dx-\int_{\mathbb{T}^2} \Lambda^s(\tilde{u}\cdot\nabla u^1+u^2\cdot\nabla \tilde{u})\cdot\Lambda^s\tilde{u}\,dx\\
&-\int_{\mathbb{T}^2} \Lambda^s((\nu(\theta^1)-\nu(\theta^2))2Du^2):\nabla\Lambda^s\tilde{u}\,dx-\int_{\mathbb{T}^2} \Lambda^s(\tilde{u}\cdot\nabla\theta^1+u^2\cdot\nabla\tilde{\theta})\Lambda^s\tilde{\theta}\,dx
\\
&
+\int_{\mathbb{T}^2} \Lambda^s(-\Delta\theta^1\nabla\tilde{\theta}-\Delta\tilde{\theta}\nabla\theta^2+\nabla\tilde{\theta}\phi(\theta^1))\cdot\Lambda^s\tilde{u}\,dx\\
&
+\int_{\mathbb{T}^2} \Delta\Lambda^s(\phi(\theta^1)-\phi(\theta^2)\Lambda^s\tilde{\theta}\,dx
+\int_{\mathbb{T}^2} \Lambda^s(\nabla\theta^2(\phi(\theta^1)-\phi(\theta^2)))\cdot\Lambda^s\tilde{u}\,dx=:\sum_{i=1}^7I_i.
\end{split}
\end{equation}
From Lemmas \ref{lemma-comm} and \ref{lem-4}, we have
\begin{equation*}
\begin{split}
&|I_1|=|\int_{\mathbb{T}^2}  [\nu(\theta^1)-\nu(0), \Lambda^s]D\tilde{u}:\nabla \Lambda^s\tilde{u}\,dx| \leq \|\nabla \tilde{u}\|_{H^s}\|[\nu(\theta^1)-\nu(0), \Lambda^s]D\tilde{u}\|_{L^2}^2\\
&\leq C\|\nabla \tilde{u}\|_{H^s}(\|\theta^1\|_{H^s}\|D\tilde{u}\|_{L^\infty}+\|\nabla\theta^1\|_{L^\infty}\|\tilde{u}\|_{H^{s}})\\
&\leq C(\|\theta^1\|_{H^s}\|D\tilde{u}\|_{H^1}^{\frac{s-1}{s}}\|\nabla \tilde{u}\|_{H^s}^{\frac{s+1}{s}}+\|\nabla\theta^1\|_{H^s}\|\tilde{u}\|_{H^{s}}\|\nabla \tilde{u}\|_{H^s}),
\end{split}
\end{equation*}
which implies
\begin{equation}\label{3-96}
\begin{split}
&|I_1|\leq \frac{c\nu_1}{8} \|\nabla \tilde{u}\|_{H^s}+C (\|\theta^1\|_{H^s}^{\frac{2s}{s-1}}\|\tilde{u}\|_{H^1}^2+\|\nabla\theta^1\|_{H^s}^2\|\tilde{u}\|_{H^{s}}^2).
\end{split}
\end{equation}
It is easy to find
\begin{equation}\label{3-97}
\begin{split}
&|I_2|=|-\int_{\mathbb{T}^2} \Lambda^s(\tilde{u}\cdot\nabla u^1+u^2\cdot\nabla \tilde{u})\cdot\Lambda^s\tilde{u}\,dx|\\
&\qquad\qquad\qquad\qquad\leq C(\|\tilde{u}\|_{H^s}^2\|\nabla u^1\|_{H^s}+\|\tilde{u}\|_{H^s}\|\nabla \tilde{u}\|_{H^s}\|u^2\|_{H^s})\\
&\qquad\qquad\qquad\qquad\leq \frac{c\nu_1}{8} \|\nabla \tilde{u}\|_{H^s}^2+C \|\tilde{u}\|_{H^s}^2\|u^2\|_{H^s}^2+C\|\tilde{u}\|_{H^s}^2\|\nabla u^1\|_{H^s},
\end{split}
\end{equation}
and similarly,
\begin{equation*}\label{3-99}
\begin{split}
&|I_4|+|I_5|=|\int_{\mathbb{T}^2} (\Lambda^s(\tilde{u}\cdot\nabla\theta^1) +\Lambda^s(u^2\cdot\nabla\tilde{\theta}))\Lambda^s\tilde{\theta}\,dx|\\
&\qquad\qquad\qquad +|\int_{\mathbb{T}^2} \Lambda^s(-\Delta\theta^1\nabla\tilde{\theta}-\Delta\tilde{\theta}\nabla\theta^2+\nabla\tilde{\theta}\phi(\theta^1))\cdot\Lambda^s\tilde{u}dx|\\
&\leq C (\|\tilde{u}\|_{H^s}\|\nabla\theta^1\|_{H^s} +\|u^2\|_{H^s}\|\nabla\tilde{\theta}\|_{H^s})\|\tilde{\theta}\|_{H^s}\\
&\qquad +C(\|\Delta\theta^1\|_{H^s}\|\nabla\tilde{\theta}\|_{H^s}+\|\Delta\tilde{\theta}\|_{H^s}\|\nabla\theta^2\|_{H^s}
+\|\nabla\tilde{\theta}\|_{H^s}\|\phi(\theta^1)\|_{H^s})\|\tilde{u}\|_{H^s},
\end{split}
\end{equation*}
which follows from Lemma \ref{lem-4} that
\begin{equation}\label{3-99}
\begin{split}
|I_4|+|I_5|&\leq \frac{1}{4} \|\Delta\tilde{\theta}\|_{H^s}^2+ C (\|\tilde\theta\|_{H^s}^2+\|\tilde u\|_{H^s}^2)\\
&\quad\qquad\qquad\qquad\times (\|\nabla\theta^1\|_{H^s}+\|u^2\|_{H^s}^2+\|\Delta\theta^1\|_{H^s}^2+\|\nabla\theta^2\|_{H^s}^2+\|\theta^1\|_{H^s}^2).
\end{split}
\end{equation}
Thanks to Lemma \ref{lem-4} again, we obtain
\begin{equation}\label{3-98}
\begin{split}
&|I_3|=|\int_{\mathbb{T}^2} \Lambda^s((\nu(\theta^1)-\nu(\theta^2))2Du^2):\nabla\Lambda^s\tilde{u}\,dx|\\
&\leq \frac{c\nu_1}{8}\|\nabla \tilde{u}\|_{H^s}^2+C \|(\nu(\theta^1)-\nu(\theta^2))\|_{H^s}^2\|Du^2\|_{H^s}^2\leq \frac{\nu_1}{8}\|\nabla \tilde{u}\|_{H^s}^2+C \|\tilde\theta\|_{H^s}^2\|\nabla u^2\|_{H^s}^2,
\end{split}
\end{equation}
and
\begin{equation}\label{3-100}
\begin{split}
&|I_6|+|I_7|=|\int_{\mathbb{T}^2} \Delta\Lambda^s(\phi(\theta^1)-\phi(\theta^2)\Lambda^s\tilde{\theta}\,dx|
+|\int_{\mathbb{T}^2} \Lambda^s(\nabla\theta^2(\phi(\theta^1)-\phi(\theta^2)))\cdot\Lambda^s\tilde{u}\,dx|\\
&\leq \frac{1}{4} \|\Delta\tilde{\theta}\|_{H^s}^2+ C \|\phi(\theta^1)-\phi(\theta^2)\|_{H^s}^2+C\|\nabla\theta^2\|_{H^s}\|\phi(\theta^1)-\phi(\theta^2))\|_{H^s}\|\tilde{u}\|_{H^s}\\
&\leq \frac{1}{4} \|\Delta\tilde{\theta}\|_{H^s}^2+ C (\|\tilde\theta\|_{H^s}^2+\|\tilde u\|_{H^s}^2)(1+\|\nabla\theta^2\|_{H^s}).
\end{split}
\end{equation}
Inserting (\ref{3-96})-(\ref{3-100}) into \eqref{3-95} leads to
\begin{equation}\label{3-101}
\begin{split}
&\frac{d}{dt}\|(\tilde{u},\tilde{\theta})\|_{H^s}^2+c\nu_1\|\nabla\tilde{u}\|^2_{H^s}+\|\Delta\tilde{\theta}\|^2_{H^s}
\\
&\leq C(\|\tilde\theta\|_{H^s}^2+\|\tilde u\|_{H^s}^2)(1+\|\nabla u^1\|_{H^s}+\|\nabla u^2\|_{H^s}^2+\|\nabla\theta^1\|_{H^s}+\|\Delta\theta^1\|_{H^s}^2+\|\nabla\theta^2\|_{H^s}^2).
\end{split}
\end{equation}
Thus, from \eqref{local-space}, we apply Gronwall's inequality to \eqref{3-101} to ensure
\begin{equation*}\label{3-102}
\sup_{\tau\in [0, t]}\|(\tilde{u},\tilde{\theta})(\tau)\|_{H^s}^2\leq C_T\|(\tilde{u}_0,\tilde{\theta}_0)\|^2_{H^s},
\end{equation*}
which implies continuous dependency and uniqueness of the above solution to \eqref{NSCH-2}, and then completes the proof of Theorem \ref{thm-lp-nsch}.
\end{proof}

\renewcommand{\theequation}{\thesection.\arabic{equation}}
\setcounter{equation}{0}
\section{Uniform $L^\infty$-bounds of concentrations away from singular points}\label{sect-5}
In this part, we consider the Cahn-Hilliard equation with convection
\begin{equation}\label{3-1}
\begin{cases}
&\partial_t\theta+u\cdot\nabla\theta=\Delta(\phi(\theta)-\Delta\theta) \quad\hbox{in}\quad (0,T) \times \mathbb{T}^2,\\
&\theta|_{t=0}=\theta_0
\end{cases}
\end{equation}
for given $u\in L^{\infty}([0,T); H^1(\mathbb{T}^2))\cap L^2([0,T]; H^2(\mathbb{T}^2))$ with $\partial_t u\in L^2([0,T]; L^2(\mathbb{T}^2))$ and $\dive\, u=0$.
Here $\phi=\Phi'$ satisfies \eqref{1-9}. We assume that $\theta_0 \in H^s(\mathbb{T}^2))$ with $s\geq 4$, $m(\theta_0)=0$, and $\|\theta_0\|_{L^{\infty}(\mathbb{T}^2))} \leq 1-\delta_0$ for some $\delta_0 \in (0, 1)$.

Motivated by \cite{EGZ, EMZ, MiZe2004}, our aim in this section is to prove there allow to separate the solution of \eqref{3-1} from the singular points $\pm 1$ of the free energy density $\phi$.
Estimates of this type are crucial for the study of Navier-Stokes-Cahn-Hilliard system with singular potentials, since they allow to reduce the problem to one with regular potentials, which results in the global existence of classical solution of the system \eqref{NSCH-2}. In order to achieve the goal, we consider the following approximate equation of \eqref{3-1}:
\begin{equation} \label{3-3-0}
\begin{cases}
&\partial_t\theta-\varepsilon\partial_t\Delta\theta+ u\cdot\nabla\theta=\Delta(\phi(\theta))-\Delta\theta),\quad\forall \, (t, x)\in (0,T) \times \mathbb{T}^2,
\\
&\theta|_{t=0}=\theta_0,\quad\forall \, x\in \mathbb{T}^2
\end{cases}
\end{equation}
with $\varepsilon>0$, which may be equivalently rewritten as
\begin{equation} \label{3-3}
\begin{cases}
&\varepsilon\partial_t\theta+(-\Delta)^{-1}((u\cdot\nabla)\theta+\partial_t\theta)=\Delta\theta-\phi(\theta)+m(\phi(\theta)),\quad\forall \, (t, x)\in (0,T) \times \mathbb{T}^2,
\\
&\theta|_{t=0}=\theta_0,\quad\forall \, x\in \mathbb{T}^2.
\end{cases}
\end{equation}
We define the phase space $\mathbb{D}_{0, \varepsilon}$ for the approximate equation \eqref{3-3} as
\begin{equation}\label{3-4}
\begin{split}
&\mathbb{D}_{0, \varepsilon}:=\{f\in H^2(\mathbb{T}^2)| \|f\|_{L^\infty(\mathbb{T}^2)}\leq 1, m(f_0)=0, \phi(f)\in L^2(\mathbb{T}^2), \varepsilon^{\frac{1}{2}}\varphi\in L^2(\mathbb{T}^2),
\\
&\quad \qquad\,\varphi\in \dot{H}^{-1}(\mathbb{T}^2), \,\hbox{where}\, \varphi=\varphi(f):=(\varepsilon+(-\Delta)^{-1})^{-1}[\Delta f-\phi(f)+m(\phi(f))]\}
\end{split}
\end{equation}
 equipped with the norm
\begin{equation}\label{3-4-1}
\|f\|^2_{\mathbb{D}_{0, \varepsilon}}:=\|f\|^2_{H^2(\mathbb{T}^2)}+\|\phi(f)\|^2_{L^2(\mathbb{T}^2)}
+\varepsilon\|\varphi(f)\|^2_{L^2(\mathbb{T}^2)}+\|\varphi(f)\|^2_{\dot{H}^{-1}(\mathbb{T}^2)},
\end{equation}
and also the phase space $\mathbb{D}_{0}$ for the original equation \eqref{3-1} as
\begin{equation}\label{3-4-2}
\begin{split}
&\mathbb{D}_{0}:=\{f\in H^2(\mathbb{T}^2)| \|f\|_{L^\infty(\mathbb{T}^2)}\leq 1, m(f_0)=0, \\
&\qquad\qquad\qquad\qquad\qquad\qquad \phi(f)\in L^2(\mathbb{T}^2),\,\nabla(\Delta f-\phi(f))\in L^2(\mathbb{T}^2)\}
\end{split}
\end{equation}
 equipped with the norm
\begin{equation}\label{3-4-3}
\|f\|^2_{\mathbb{D}_{0}}:=\|f\|^2_{H^2(\mathbb{T}^2)}+\|\phi(f)\|^2_{L^2(\mathbb{T}^2)}
+\|\nabla(\Delta f-\phi(f))\|^2_{L^2(\mathbb{T}^2)},
\end{equation}
Repeating the argument of the proof of Theorem \ref{thm-lp-nsch}, we may readily find the unique smooth solution $\theta$ to \eqref{3-3} on $[0, T_2)$ for $0<T_2 \leq T$, and moreover, if the maximal existence time $T_2<T$ and
\begin{equation}\label{3-5-1}
\|\theta\|_{L^{\infty}((0, T_2) \times \mathbb{T}^2)}\leq 1-\delta_1
\end{equation}
for some positive constant $\delta_1 (\in (0, 1))$ independent of $\varepsilon$, then $\theta$
may be continued beyond $T_2$ into a smooth solution of \eqref{3-3}.

In what follows, we only give a derivation of the a priori estimates for the solution $\theta$ to \eqref{3-3}, assuming that $\theta$ is a sufficiently regular function, and then deduce \eqref{3-5-1}, which will allow us to extend the existence time interval $[0, T_2)$ to $[0, T)$.

For this, we need first to establish the following dissipative estimate in the space $\mathbb{D}_{0, \varepsilon}$.

\begin{lem}\label{thm-uni-ch}
Let the non-linearity $\phi=\Phi'$ satisfy \eqref{1-9}, $\varepsilon \in (0, 1]$, $\theta_0 \in \mathbb{D}_{0, \varepsilon}$, $\|\theta_0\|_{L^\infty}\leq 1-\delta_0$ for some $\delta_0 \in(0,1)$, $u\in L^{\infty}([0,T]; H^1(\mathbb{T}^2))\cap L^2([0,T]; H^2(\mathbb{T}^2))$ with $\partial_t u\in L^2([0,T]; L^2(\mathbb{T}^2))$ and $\dive u=0$, and $\theta(t)$ be a weak solution to\eqref{3-3} on $[0,T)$ for $0 <T <+\infty$ satisfying \eqref{3-56-2},
$\theta \in C([0,T); \mathbb{D}_{0, \varepsilon}))$, $\partial_t \theta \in L^2_{loc}([0,T); H^{1}(\mathbb{T}^2))$. Then there holds
\begin{equation}\label{3-6}
\begin{split}
\sup_{\tau \in [0, t]}\|\theta(\tau)\|^2_{\mathbb{D}_{0, \varepsilon}}+\int^t_0\|\partial_t\theta(\tau)\|^2_{H^1(\mathbb{T}^2)}\,d\tau\leq C(\|\theta_0\|_{\mathbb{D}_{0, \varepsilon}}, T).
\end{split}
\end{equation}
\end{lem}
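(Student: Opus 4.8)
The plan is to establish \eqref{3-6} by running the standard hierarchy of energy estimates for the regularized convective Cahn–Hilliard equation \eqref{3-3}, exploiting the assumption $\phi'\ge-\alpha$ and the given regularity of the drift $u$ (in particular $u\in L^\infty_tH^1\cap L^2_tH^2$, $\partial_t u\in L^2_tL^2$). The key structural observation is that \eqref{3-3} can be written as $\varepsilon\partial_t\theta+(-\Delta)^{-1}\partial_t\theta=\Delta\theta-\phi(\theta)+m(\phi(\theta))-(-\Delta)^{-1}(u\cdot\nabla\theta)$, so the operator $(\varepsilon+(-\Delta)^{-1})$ acting on $\partial_t\theta$ supplies the dissipation; since $\int_{\mathbb{T}^2}\theta\,dx$ is conserved, $(-\Delta)^{-1}$ is well defined on the mean-zero parts throughout.

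First I would test \eqref{3-3} against $\mu:=-\Delta\theta+\phi(\theta)-m(\phi(\theta))$ (equivalently $-(\varepsilon+(-\Delta)^{-1})\partial_t\theta$ rearranged). This produces the basic Lyapunov identity: $\tfrac{d}{dt}\big(\tfrac12\|\nabla\theta\|_{L^2}^2+\int\Phi(\theta)\big)+\varepsilon\|\partial_t\theta\|_{L^2}^2+\|\nabla(-\Delta)^{-1/2}\partial_t\theta\|_{L^2}^2 = -\int (u\cdot\nabla\theta)(-\Delta)^{-1}\partial_t\theta\,dx$, the last term coming from the convective piece. Using $\|\theta\|_{L^\infty}\le1$ and interpolation the right side is absorbed into the dissipation plus lower-order terms controlled by $\|u\|_{L^2_tH^1}$; this gives $\theta\in L^\infty_tH^1$, $\partial_t\theta\in L^2_t\dot H^{-1}$, and $\int\Phi(\theta)$ bounded — hence by Remark \ref{rmk-Phi-1} (and its proof) the $L^\infty$ bound \eqref{3-56-2} is consistent. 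Since $\tilde\Phi(\theta)=\Phi(\theta)-\phi(0)\theta+\tfrac\alpha2\theta^2\ge0$ by \eqref{2-16-1}, one handles the $\int\Phi(\theta_0)$ term exactly as in Lemma \ref{lem-unif-u-1}.

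Next I would differentiate the structure one order up: test \eqref{3-3} against $\partial_t\mu = -\Delta\partial_t\theta+\phi'(\theta)\partial_t\theta-m(\phi'(\theta)\partial_t\theta)$, or equivalently apply $\partial_t$ and pair with $\partial_t\theta$. The good term $\tfrac12\tfrac{d}{dt}\big(\varepsilon\|\partial_t\theta\|_{L^2}^2+\|\partial_t\theta\|_{\dot H^{-1}}^2\big)$ appears on the left; on the other side one gets $\|\nabla\partial_t\theta\|_{L^2}^2$ (dissipation), $\int\phi'(\theta)|\partial_t\theta|^2$ which is $\ge-\alpha\|\partial_t\theta\|_{L^2}^2$ by \eqref{1-9} — the crucial sign — and the time-derivative of the convective term, $-\partial_t\!\int(u\cdot\nabla\theta)(-\Delta)^{-1}\partial_t\theta$, which after integration by parts splits into pieces involving $\partial_t u$ (here $\partial_t u\in L^2_tL^2$ is used) and $u\cdot\nabla\partial_t\theta$ (absorbed via interpolation between $\|\partial_t\theta\|_{\dot H^{-1}}$ and $\|\nabla\partial_t\theta\|_{L^2}$ using $u\in L^\infty_tH^1$). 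Gronwall then yields $\partial_t\theta\in L^\infty_t(\dot H^{-1}\cap\{\varepsilon^{1/2}L^2\})\cap L^2_tH^1$. Returning to the elliptic relation $-\Delta\theta+\phi(\theta)-m(\phi(\theta)) = -\varepsilon\partial_t\theta-(-\Delta)^{-1}(u\cdot\nabla\theta+\partial_t\theta) =: g$, now known in $L^2$ uniformly, I would pair with $\phi(\theta)-m(\phi(\theta))$ to get $\|\phi(\theta)\|_{L^2}^2+\|\Delta\theta\|_{L^2}^2\lesssim \|g\|_{L^2}^2$ after using $-\int\Delta\theta\,\phi(\theta)=\int\phi'(\theta)|\nabla\theta|^2\ge-\alpha\|\nabla\theta\|_{L^2}^2$ and Proposition \ref{prop-2} to control $m(\phi(\theta))$; this closes the $\|\theta\|_{H^2}$ and $\|\phi(\theta)\|_{L^2}$ parts of $\|\theta\|_{\mathbb{D}_{0,\varepsilon}}$, and the definition of $\varphi(f)$ in \eqref{3-4} identifies $\varphi(\theta)=-\partial_t\theta$, so the remaining terms $\varepsilon\|\varphi\|_{L^2}^2+\|\varphi\|_{\dot H^{-1}}^2$ are exactly what the previous step bounded.

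The main obstacle I anticipate is the convective term at the second level, i.e. controlling $\partial_t\!\int(u\cdot\nabla\theta)(-\Delta)^{-1}\partial_t\theta\,dx$ uniformly in $\varepsilon$: the term $\int(\partial_t u\cdot\nabla\theta)(-\Delta)^{-1}\partial_t\theta$ needs $\nabla\theta$ in a space compatible with $\partial_t u\in L^2_tL^2$, which one gets by interpolating $\|\nabla\theta\|_{L^\infty}$ or $\|\nabla\theta\|_{L^4}$ against the $H^2$ bound from the elliptic step — so there is a mild circularity to resolve by doing the estimates simultaneously (one combined Gronwall on $\|\nabla\theta\|_{L^2}^2+\varepsilon\|\partial_t\theta\|_{L^2}^2+\|\partial_t\theta\|_{\dot H^{-1}}^2+\|\theta\|_{H^2}^2$), and the term $\int(u\cdot\nabla\partial_t\theta)(-\Delta)^{-1}\partial_t\theta$ must be integrated by parts onto $u$ (using $\dive u=0$) to avoid a full derivative on $\partial_t\theta$ landing outside the dissipation. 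Everything else is routine: the sign conditions in \eqref{1-9}, the mean-value estimate of Proposition \ref{prop-2}, and Young/interpolation inequalities, with all constants independent of $\varepsilon\in(0,1]$ as claimed.
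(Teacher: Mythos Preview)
Your plan is essentially the paper's proof: differentiate \eqref{3-3} in $t$, pair with $\partial_t\theta$ to control $\varepsilon\|\partial_t\theta\|_{L^2}^2+\|\partial_t\theta\|_{\dot H^{-1}}^2$ with dissipation $\int_0^t\|\partial_t\theta\|_{H^1}^2$, then recover $\|\theta\|_{H^2}$ and $\|\phi(\theta)\|_{L^2}$ from the elliptic relation \eqref{3-12} via $\phi'\ge-\alpha$ and Proposition~\ref{prop-2}. Two minor corrections are worth making.

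First, the circularity you flag does not occur: write $u\cdot\nabla\theta=\dive(u\theta)$ \emph{before} applying $(-\Delta)^{-1}$, so that after differentiating in $t$ the convective contribution becomes
\[
\int_{\mathbb{T}^2}(\partial_t u\,\theta+u\,\partial_t\theta)\cdot\nabla(-\Delta)^{-1}\partial_t\theta\,dx,
\]
which needs only $\|\theta\|_{L^\infty}\le1$ (never $\nabla\theta$), together with $\|u\|_{L^\infty}^2\lesssim\|u\|_{L^2}\|\Delta u\|_{L^2}$ and the interpolation $\|\partial_t\theta\|_{L^2}^2\lesssim\|\partial_t\theta\|_{H^1}\|\partial_t\theta\|_{\dot H^{-1}}$. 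The Gronwall then closes on $\varepsilon\|\partial_t\theta\|_{L^2}^2+\|\partial_t\theta\|_{\dot H^{-1}}^2$ alone, fully decoupled from the $H^2$ estimate, so your preliminary step (testing against $\mu$) is not needed here.

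Second, from \eqref{3-3} one has $\varphi(\theta)=\partial_t\theta+(\varepsilon+(-\Delta)^{-1})^{-1}(-\Delta)^{-1}\dive(u\theta)$, not $-\partial_t\theta$; the extra piece is bounded by $C\|u\theta\|_{L^2}$ uniformly in $\varepsilon\in(0,1]$, so harmless, but the identification should be stated correctly.
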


\begin{proof}
Setting $\varphi(t):=\partial_t\theta(t)$. Thanks to the fact that $\theta_0 \in \mathbb{D}_{0, \varepsilon}$, we may first verify that $m(\theta(t))\equiv 0$ and then $m(\varphi(t))\equiv 0$ on the existence time interval according the equation \eqref{3-3}.  Differentiate \eqref{3-3} with respect to $t$, and then multiply the resulting equation by $\varphi(t)$ and integrate over $\mathbb{T}^2$,  we get
\begin{equation}\label{3-7}
\begin{split}
&\frac{1}{2}\frac{d}{dt}(\varepsilon\|\varphi(t)\|^2_{L^2}+\|\varphi(t)\|^2_{\dot{H}^{-1}}) +\|\varphi\|^2_{H^1}
\\
&=-\int_{\mathbb{T}^2} (-\Delta)^{-1}\partial_t((u\cdot\nabla)\theta)\varphi\, dx+\|\varphi\|^2_{L^2}-\int_{\mathbb{T}^2} \phi'(\theta)|\varphi|^2\,dx+ m(\phi'(\theta)\varphi)\int_{\mathbb{T}^2} \varphi \,dx.
\end{split}
\end{equation}
For the first integral of the right-hand side in \eqref{3-7}, one can obtain
\begin{equation}\label{3-9}
\begin{split}
&|\int_{\mathbb{T}^2} (-\Delta)^{-1}\partial_t((u\cdot\nabla)\theta)\varphi \,dx|=|\int_{\mathbb{T}^2} (\partial_tu\theta+u\partial_t\theta)\cdot (-\Delta)^{-1}\nabla\varphi \,dx|\\
&\leq\frac{1}{2}\|\partial_tu\theta+u\partial_t\theta\|_{L^2}^2+\frac{1}{2}\|\varphi\|^2_{\dot{H}^{-1}}
\leq C(\|u_t\|^2_{L^2}\|\theta\|^2_{L^\infty}+\|u\|^2_{L^\infty}\|\varphi\|^2_{L^2})+\frac{1}{2}\|\varphi\|^2_{\dot{H}^{-1}}
\\
&\leq \frac{1}{8}\|\varphi\|^2_{H^1}+C (\|u\|^4_{L^\infty}+1)\|\varphi\|^2_{\dot{H}^{-1}}+C\|u_t\|^2_{L^2}
\end{split}
\end{equation}
due to \eqref{3-56-2} and the interpolation inequality. On the other hand, applying the interpolation inequality $\|\varphi\|^2_{L^2(\mathbb{T}^2)}\leq C\|\varphi\|_{H^1(\mathbb{T}^2)}\|\varphi\|_{\dot{H}^{-1}(\mathbb{T}^2)}$
and noting that $m(\varphi(t))\equiv0$ and $\phi'(\theta)\geq-\alpha$, we deduce
\begin{equation}\label{3-8}
\|\varphi\|^2_{L^2}-\int_{\mathbb{T}^2} \phi'(\theta)|\varphi|^2dx+ m(\phi'(\theta)\varphi)\int_{\mathbb{T}^2} \varphi dx\leq \frac{1}{8}\|\varphi\|^2_{H^1}+C \|\varphi\|^2_{\dot{H}^{-1}}.
\end{equation}
Plugging \eqref{3-7} and \eqref{3-8} into \eqref{3-9} results in
\begin{equation}\label{3-10}
\begin{split}
&\frac{d}{dt}(\varepsilon\|\varphi(t)\|^2_{L^2}+\|\varphi(t)\|^2_{\dot{H}^{-1}}) +\|\varphi\|^2_{H^1}
\leq C\|u\|^4_{L^\infty}\|\varphi\|^2_{\dot{H}^{-1}}+C\|u_t\|^2_{L^2}+C_\alpha\|\varphi\|^2_{\dot{H}^{-1}}\\
&\leq C\| u\|^2_{L^2}\|\Delta u\|^2_{L^2}\|\varphi\|^2_{\dot{H}^{-1}}+C\|u_t\|^2_{L^2}+C \|\varphi\|^2_{\dot{H}^{-1}},
\end{split}
\end{equation}
where we have used the interpolation inequality $\|u\|^4_{L^\infty}\leq C\| u\|^2_{L^2}\|\Delta u\|^2_{L^2}$ in the second inequality.

Hence, applying Gronwall's inequality to \eqref{3-10} results in
\begin{equation}\label{3-11}
\begin{split}
&\sup_{\tau \in [0, t]}(\varepsilon\|\varphi(\tau)\|^2_{L^2}+\|\varphi(\tau)\|^2_{\dot{H}^{-1}})+\int^t_0\|\varphi(\tau)\|^2_{H^1}\,d\tau\\
&\leq 2(\varepsilon\|\varphi(0)\|^2_{L^2}+\|\varphi(0)\|^2_{\dot{H}^{-1}}+\|u_t\|_{L^2([0,t];L^2)}^2)e^{C  t+C\| u\|^2_{L^{\infty}([0,t];L^2)}\|\Delta u\|^2_{L^2([0,t];L^2)}}
\\
&\leq C_{T}(\varepsilon\|\varphi(0)\|^2_{L^2}+\|\varphi(0)\|^2_{\dot{H}^{-1}}+1),
\end{split}
\end{equation}
which implies the $\varphi$-part of \eqref{3-6}.

For the estimate of the $\theta$-part of \eqref{3-6}, similar to the proof of \eqref{3-60}, we may get
\begin{equation*}
\frac{d}{dt}(\|\theta\|^2_{L^2}+\varepsilon\|\nabla\theta\|^2_{L^2})+\|\theta\|^2_{H^2} \leq C\|\theta\|^2_{L^2},
\end{equation*}
which along with Gronwall's inequality gives rise to
\begin{equation}\label{3-11-2}
\sup_{\tau\in [0, t]}(\|\theta(\tau)\|^2_{L^2}+\varepsilon\|\nabla\theta(\tau)\|^2_{L^2})+\int^t_0\|\theta\|^2_{H^2}\,d\tau \leq C e^{Ct}\|\theta_0\|_{L^2}^2.
\end{equation}
On the other hand, we rewrite \eqref{3-3} as
\begin{equation}\label{3-12}
\Delta\theta-\phi(\theta)+m(\phi(\theta))=h(t):=\varepsilon\varphi(t)+(-\Delta)^{-1}\varphi(t)+(-\Delta)^{-1}((u\cdot\nabla)\theta)(t).
\end{equation}
Multiply \eqref{3-12} by $\Delta\theta$ and integrate over $\mathbb{T}^2$, and note that $m(\Delta\theta)=0$ and $\phi'\geq-\alpha$, then we find
\begin{equation}\label{3-13}
\begin{split}
&\|\Delta\theta\|^2_{L^2}=-\int_{\mathbb{T}^2} \phi'(\theta)|\nabla\theta|^2dx+m(\phi(\theta))\int_{\mathbb{T}^2} \Delta\theta \,dx
\\
&+\int_{\mathbb{T}^2} (\varepsilon\varphi(t)+(-\Delta)^{-1}\varphi(t))\Delta\theta dx+\int_{\mathbb{T}^2} (-\Delta)^{-1}((u\cdot\nabla)\theta)(t)\Delta\theta \,dx
\\
& \leq\alpha\|\nabla\theta\|^2_{L^2}+\int_{\mathbb{T}^2} (\varepsilon\varphi(t)+(-\Delta)^{-1}\varphi(t))\Delta\theta \, dx+\int_{\mathbb{T}^2} (-\Delta)^{-1}((u\cdot\nabla)\theta)(t)\Delta\theta\, dx.
\end{split}
\end{equation}
It is easy to find
\begin{equation}\label{3-15}
\int_{\mathbb{T}^2} (-\Delta)^{-1}((u\cdot\nabla)\theta)\,\Delta\theta \,dx=\int_{\mathbb{T}^2} (u\cdot\nabla)\theta\theta \,dx=0,
\end{equation}
and
\begin{equation*}
\begin{split}
&\int_{\mathbb{T}^2} (\varepsilon\varphi(t)+(-\Delta)^{-1}\varphi(t))\Delta\theta \,dx\leq \frac{1}{8}\|\Delta\theta\|^2_{L^2}+\varepsilon^2 \|\varphi\|^2_{L^2}+\|\varphi\|_{\dot{H}^{-1}}\|\nabla\theta\|_{L^2},
\end{split}
\end{equation*}
which along with $\alpha\|\nabla\theta\|^2_{L^2}\leq\frac{1}{8}\|\Delta\theta\|^2_{L^2}+C\|\theta\|^2_{L^2}$
implies
\begin{equation}\label{3-16}
\begin{split}
&\alpha\|\nabla\theta\|^2_{L^2}+\int_{\mathbb{T}^2} (\varepsilon\varphi(t)+(-\Delta)^{-1}\varphi(t))\Delta\theta \,dx\\
&\qquad \leq \frac{1}{2}\|\Delta\theta\|^2_{L^2}+\varepsilon^2 C \|\varphi\|^2_{L^2}+C\|\varphi\|^2_{\dot{H}^{-1}}+C\|\theta\|^2_{L^2}.
\end{split}
\end{equation}
Inserting \eqref{3-15} and \eqref{3-16} into \eqref{3-13} yields
\begin{equation}\label{3-17}
\sup_{\tau \in [0, t]}\|\theta(\tau)\|^2_{H^2}\leq C\sup_{\tau \in [0, t]}\bigg(\varepsilon^2  \|\varphi\|^2_{L^2}+\|\varphi\|^2_{\dot{H}^{-1}}+ \|\theta\|^2_{L^2}\bigg)\leq C(\|\theta_0\|_{\mathbb{D}_{0, \varepsilon}}, T),
\end{equation}
where we have used the inequalities \eqref{3-11} and \eqref{3-11-2}.

Let's now estimate $\|\phi(\theta)\|^2_{L^2}$. It follows from \eqref{3-11}, \eqref{3-12}, \eqref{3-17}, $\dive\, u=0$, and $m(\phi)\equiv 0$ that
\begin{equation}\label{3-18-1}
\begin{split}
&\|\phi(\theta)-m(\phi(\theta))\|_{L^2(\mathbb{T}^2)}\leq \|\Delta\theta\|_{L^2}+ \|h\|_{L^2(\mathbb{T}^2)}\\
&\leq \|\Delta\theta\|_{L^2}+\|\varepsilon\varphi\|_{L^2}+\|(-\Delta)^{-1}\varphi\|_{L^2}+\|(-\Delta)^{-1}\dive\,(u\,\theta)\|_{L^2}\\
  & \leq \|\Delta\theta\|_{L^2}+\varepsilon\|\varphi\|_{L^2}+C\|\varphi\|_{\dot H^{-1}}+C\|u\|_{L^2}\|\theta\|_{H^2},
\end{split}
\end{equation}
which, together with \eqref{3-11}, \eqref{3-11-2}, and  \eqref{3-17}, implies
\begin{equation}\label{3-18}
\begin{split}
&\sup_{\tau \in [0, t]}\|\phi(\theta)-m(\phi(\theta))\|_{L^2(\mathbb{T}^2)}\leq C(\|\theta_0\|_{\mathbb{D}_{0, \varepsilon}}, T).
\end{split}
\end{equation}
Whence applying Proposition \ref{prop-2} to \eqref{3-12} (where we take $f:=\Delta\theta-h$), we deduce that
\begin{equation}\label{3-19}
\begin{split}
|m(\phi(\theta))|\leq C (1+|\mathbb{T}|\|f\|_{L^2})\leq C (1+\|\phi(\theta)-m(\phi(\theta))\|_{L^2(\mathbb{T}^2)}),
\end{split}
\end{equation}
and then
\begin{equation}\label{3-19-1}
\begin{split}
\sup_{\tau \in [0, t]}|m(\phi(\theta))|\leq C(\|\theta_0\|_{\mathbb{D}_{0, \varepsilon}}, T).
\end{split}
\end{equation}
Note that
\begin{equation}\label{3-20}
\begin{split}
&\|\phi(\theta)-m(\phi(\theta))\|^2_{L^2}=\int_{\mathbb{T}^2} (|\phi(\theta)|^2-2\phi(\theta)m(\phi(\theta))+|m(\phi(\theta))|^2)dx
\\
&=\|\phi(\theta)\|^2_{L^2}-|\mathbb{T}^2| |m(\phi(\theta))|^2,
\end{split}
\end{equation}
it follows that
\begin{equation*}
\|\phi(\theta)\|^2_{L^2}= \|\phi(\theta)-m(\phi(\theta))\|^2_{L^2}+|\mathbb{T}^2| |m(\phi(\theta))|^2,
\end{equation*}
which, together with (\ref{3-17}-\ref{3-20}), gives rise to
\begin{equation}\label{3-21}
\sup_{\tau \in [0, t]}\|\phi(\theta)\|^2_{L^2}\leq C(\|\theta_0\|_{\mathbb{D}_{0, \varepsilon}}, T).
\end{equation}
Estimates (\ref{3-11}), (\ref{3-17}) and (\ref{3-21}) finish the proof of Lemma \ref{thm-uni-ch}.
\end{proof}
\begin{rmk}\label{rmk-indep-1}
Let's point out that the constant $C(\|\theta_0\|_{\mathbb{D}_{0, \varepsilon}}, T)$ in \eqref{3-6} is independent of $\varepsilon \in (0, 1)$, which can be readily verified by using the definition of $\|\theta_0\|_{\mathbb{D}_{0, \varepsilon}}$.
\end{rmk}

The next lemma gives an estimate of $\phi(\theta)$ in the space $L^2([0,t]; L^\infty(\mathbb{T}^2))$, which implies that $\theta$ doesn't touch the singular points for almost every time.

\begin{lem}\label{col-uni-ch}
Under the assumptions in Theorem \ref{thm-uni-ch}, the solution $\theta$ of problem (\ref{3-3}) satisfies the following estimates:
\begin{equation}\label{3-22}
\int^T_0\|\phi(\theta(\tau))\|^2_{L^\infty(\mathbb{T}^2)}\, d\tau\leq C_{T} \quad \mbox{and}
 \quad \|\theta\|_{L^{\infty}((0, T)\times \mathbb{T}^2)} <1,
 \end{equation}
where the constant $C_{T}$ depends on $T$, but not $\varepsilon \in (0, 1)$.
\end{lem}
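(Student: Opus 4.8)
Here is the plan.

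\textbf{Step 1 (a parabolic reformulation and control of the forcing).} Rearranging \eqref{3-3} (equivalently \eqref{3-12}) so that only $\varepsilon\partial_t\theta$ is kept on the left, $\theta$ solves the second‑order parabolic equation
\[
\varepsilon\partial_t\theta-\Delta\theta+\phi(\theta)=R(t,x),\qquad R:=m(\phi(\theta))-(-\Delta)^{-1}\big(\partial_t\theta+(u\cdot\nabla)\theta\big).
\]
I claim $R\in L^2([0,T];L^\infty(\mathbb{T}^2))$ with a bound independent of $\varepsilon\in(0,1]$. Indeed $|m(\phi(\theta(t)))|\le C$ for every $t$ by the $L^2$–bound on $\phi(\theta)$ in Lemma \ref{thm-uni-ch} (uniform in $\varepsilon$ by Remark \ref{rmk-indep-1}); since $(u\cdot\nabla)\theta=\dive(u\theta)$ and $H^1(\mathbb{T}^2)\hookrightarrow L^3(\mathbb{T}^2)$ in two dimensions, $\|(-\Delta)^{-1}(u\cdot\nabla)\theta(t)\|_{L^\infty}\lesssim\|u(t)\theta(t)\|_{L^3}\lesssim\|u(t)\|_{H^1}\|\theta\|_{L^\infty((0,T)\times\mathbb{T}^2)}\le C$; and because $(-\Delta)^{-1}:L^2\to H^2\hookrightarrow L^\infty$ in two dimensions, $\|(-\Delta)^{-1}\partial_t\theta(t)\|_{L^\infty}\lesssim\|\partial_t\theta(t)\|_{L^2}$, so $\int_0^T\|(-\Delta)^{-1}\partial_t\theta(t)\|_{L^\infty}^2\,dt\lesssim\int_0^T\|\partial_t\theta(t)\|_{H^1}^2\,dt\le C_T$ by Lemma \ref{thm-uni-ch}. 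The key point is that $\partial_t\theta$ is \emph{not} controlled in $L^\infty_x$ in 2D, so the term $\varepsilon\partial_t\theta$ must remain on the parabolic side and not be absorbed into $R$.

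\textbf{Step 2 (reduction to a scalar ODE inequality).} Since $\theta$ is a (sufficiently regular) smooth solution, set $M(t):=\max_{\mathbb{T}^2}\theta(t,\cdot)$ and $m_\ast(t):=\min_{\mathbb{T}^2}\theta(t,\cdot)$; these are Lipschitz in $t$. At a point where the spatial maximum is attained one has $\nabla\theta=0$, $\Delta\theta\le0$, and $M'(t)=\partial_t\theta$ there for a.e. $t$ (differentiation of the maximum); evaluating the equation there gives
\[
\varepsilon M'(t)+\phi(M(t))\le\|R(t)\|_{L^\infty},\qquad M(0)=\max_{\mathbb{T}^2}\theta_0\le1-\delta_0,
\]
and symmetrically $\varepsilon m_\ast'(t)+\phi(m_\ast(t))\ge-\|R(t)\|_{L^\infty}$ with $m_\ast(0)\ge-(1-\delta_0)$. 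Moreover $m(\theta(t))\equiv0$ forces $M(t)\ge0\ge m_\ast(t)$ for all $t$.

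\textbf{Step 3 ($L^2_tL^\infty_x$–bound for $\phi(\theta)$ and conclusion).} Fix $c_\ast\in(0,1)$ such that $\phi$ is nondecreasing and nonnegative on $[c_\ast,1)$ and $c_{\ast\ast}\in(0,1)$ such that $\phi$ is nondecreasing and nonpositive on $(-1,-c_{\ast\ast}]$ (their existence is established in the first lines of the proof of Proposition \ref{prop-2}), so that $|\phi|\le K_0$ on $[-c_{\ast\ast},c_\ast]$. Adapting the proof of Proposition \ref{prop-4}: multiplying the $M$–inequality by the positive part $(\phi(M(t)))_+\ge0$, using $\phi(M)(\phi(M))_+=((\phi(M))_+)^2$ and $(\phi(M))_+M'=\tfrac{d}{dt}\Psi(M)$ with $\Psi(s):=\int_0^s(\phi(\sigma))_+\,d\sigma\ge0$ (recall $M(t)\ge0$), and then Young's inequality, gives $\varepsilon\tfrac{d}{dt}\Psi(M)+\tfrac12((\phi(M))_+)^2\le\tfrac12\|R(t)\|_{L^\infty}^2$; integrating over $[0,T]$ with $\Psi(M(T))\ge0$ and $\Psi(M(0))\le\int_0^{1-\delta_0}(\phi)_+=:C(\delta_0)$, together with Step 1, yields $\int_0^T((\phi(M(t)))_+)^2\,dt\le C_T$ uniformly in $\varepsilon\in(0,1]$; the same argument for $m_\ast$ gives $\int_0^T((-\phi(m_\ast(t)))_+)^2\,dt\le C_T$. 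Since the monotonicity of $\phi$ near $\pm1$ and its boundedness in between give $\|\phi(\theta(t))\|_{L^\infty}\le K_0+(\phi(M(t)))_+ +(-\phi(m_\ast(t)))_+$ for every $t$, the first estimate in \eqref{3-22} follows with $C_T$ independent of $\varepsilon$. Finally, $\int_0^T\|\phi(\theta(\tau))\|_{L^\infty}^2\,d\tau<\infty$ forces $\|\theta(\tau)\|_{L^\infty}<1$ for a.e. $\tau$; combined with $\theta\in C([0,T);H^2(\mathbb{T}^2))\hookrightarrow C([0,T)\times\mathbb{T}^2)$ being a smooth solution — so the otherwise finite‑valued equation $\varepsilon\partial_t\theta-\Delta\theta+\phi(\theta)=R$ cannot balance at a spacetime point where $\theta$ reaches $\pm1$ — the separation propagates to all $\tau$, giving $\|\theta\|_{L^\infty((0,T)\times\mathbb{T}^2)}<1$.

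\textbf{Expected main obstacle.} The forcing $R$ is controlled only in $L^2_tL^\infty_x$, not in $L^\infty_tL^\infty_x$; consequently Proposition \ref{prop-3} cannot be applied to obtain a fixed gap $1-\delta$, and one is forced to run the integrated, energy‑type version of Proposition \ref{prop-4} at the level of the envelopes $M(t)$ and $m_\ast(t)$. Even the $L^2_tL^\infty_x$ bound on $R$ is delicate and hinges on keeping $\varepsilon\partial_t\theta$ on the parabolic side, since $(-\Delta)^{-1}\partial_t\theta\in L^2_tL^\infty_x$ whereas $\partial_t\theta$ itself does not embed into $L^\infty_x$ in two dimensions; the borderline $\dot H^1$–valued and product terms underlying the estimates of Lemma \ref{thm-uni-ch} are precisely where the Orlicz embedding (Lemma \ref{thm-O-1}) is brought to bear.
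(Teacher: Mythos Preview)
Your proof is correct and reaches the same conclusion, but the route differs from the paper's. The paper rewrites the equation exactly as you do in Step~1 and proves the same $L^2_tL^\infty_x$ bound on the forcing $\tilde h=R$. From there, however, instead of evaluating the PDE at extreme points and working with the envelopes $M(t),m_\ast(t)$, the paper introduces the auxiliary ODE solutions
\[
\varepsilon y'_\pm+\phi(y_\pm)=\pm\|\tilde h(t)\|_{L^\infty},\qquad y_\pm(0)=\pm\|\theta_0\|_{L^\infty},
\]
applies the full second–order parabolic comparison principle to the operator $\mathcal L=\varepsilon\partial_t-\Delta+\phi(\cdot)-\tilde h$ to obtain $y_-(t)\le\theta(t,x)\le y_+(t)$, and then invokes Proposition~\ref{prop-4} as a black box on $y_\pm$ to get $\int_0^T|\phi(y_\pm)|^2\le C_T$. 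The passage to $\int_0^T\|\phi(\theta)\|_{L^\infty}^2\le C_T$ is then made via the monotone shift $\widetilde\phi(z)=\phi(z)-\phi(0)+\alpha z$, much as in your final step.

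What each approach buys: the paper's route is more modular (it reduces to exact ODEs and cites Propositions~\ref{prop-3} and~\ref{prop-4} directly), while yours is more self–contained (no auxiliary $y_\pm$, no appeal to the parabolic comparison principle) but requires replaying the energy argument of Proposition~\ref{prop-4} at the level of the differential \emph{inequalities} for $M$ and $m_\ast$, which is why you need the positive parts $(\phi(M))_+$ and $(-\phi(m_\ast))_+$. Two minor caveats on your write–up: (i) the chain rule $(\phi(M))_+M'=\tfrac{d}{dt}\Psi(M)$ requires $\Psi$ Lipschitz on the range of $M$, so one should first argue on the maximal interval where $M<1$ and use the resulting bound $\varepsilon\Psi(M(t))\le C_T$ to see this interval is all of $[0,T)$; (ii) your closing remark about Orlicz embedding is misplaced --- in the paper it enters only in the next lemma (Lemma~\ref{lem-Uni-1}), not in Lemma~\ref{thm-uni-ch}.
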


\begin{proof}
Firstly, we rewrite the problem \eqref{3-3} as a second order parabolic equation:
\begin{equation}\label{3-23}
\varepsilon\partial_t\theta-\Delta\theta+\phi(\theta)=\tilde{h}:=m(\phi(\theta))-(-\Delta)^{-1}\partial_t\theta-(-\Delta)^{-1}((u\cdot\nabla)\theta).
\end{equation}
According to \eqref{3-6} and the interpolation inequality $\|f\|_{L^\infty(\mathbb{T}^2)}\leq C \|f\|_{L^2(\mathbb{T}^2)}^{\frac{1}{2}}\|\Delta f\|_{L^2(\mathbb{T}^2)}^{\frac{1}{2}}$, we have
\begin{equation}\label{3-24}
\begin{split}
\|(-\Delta)^{-1}\partial_t\theta\|^2_{L^2([0,T];L^\infty(\mathbb{T}^2))}&\leq C\int_0^T\|(-\Delta)^{-1}\partial_t\theta\|_{L^2}\|\partial_t\theta\|_{L^2}\, d\tau\\
&\leq C \|\partial_t\theta\|^2_{L^2([0,T]; L^2(\mathbb{T}^2))}\leq C_T.
\end{split}
\end{equation}
Note that
\begin{equation}\label{3-25}
\begin{split}
&\|(-\Delta)^{-1}((u\cdot\nabla)\theta)\|^2_{L^2([0,t];L^\infty(\mathbb{T}^2))}
\leq C\|(-\Delta)^{-1}\dive\,(u\theta)\|_{L^2([0,t];L^\infty)}^2
\\
&\leq C\|u\theta\|_{L^2([0,t]; L^4)}^2 \leq C\|u\|^2_{L^2([0,t]; L^{4})}\|\theta\|^2_{L^\infty([0,t] \times \mathbb{T}^2)}\leq C_T,
\end{split}
\end{equation}
so, we get from \eqref{3-19-1} and \eqref{3-23}-\eqref{3-25} that
\begin{equation}\label{3-26}
\|\tilde{h}\|^2_{L^2([0,t];L^\infty(\mathbb{T}^2))}\leq C_T, \quad\forall~ 0\leq t\leq T.
\end{equation}

We set $h_{\pm}(t):=\|\tilde{h}(t)\|_{L^\infty(\mathbb{T}^2)}$ and consider the following two auxiliary ODEs:
\begin{equation*}
\begin{cases}
&\varepsilon y'_{\pm}+\phi(y_{\pm})=h_{\pm}, \quad \forall~t\geq0,
\\
&y_{\pm}(0)=\pm\|\theta_0\|_{L^\infty(\mathbb{T}^2)}.
\end{cases}
\end{equation*}
Then, thanks to Lemma \ref{thm-uni-ch}, the solutions $y_{\pm}(t)$ are well defined.

Define the operator $\mathcal{L}$ as
\begin{equation*}
\mathcal{L}\theta(t):=\varepsilon\partial_t\theta-\Delta\theta+\phi(\theta)-\tilde{h}.
\end{equation*}
Note that
\begin{equation*}
\begin{split}
&\mathcal{L}\theta(t)=0,\quad \mathcal{L}y_+(t)=\varepsilon y'_+-\Delta y_++\phi(y_+)-\tilde{h}=h_+-\tilde{h}\geq0,
\\
&\mathcal{L}y_-(t)=\varepsilon y'_--\Delta y_-+\phi(y_-)-\tilde{h}=h_--\tilde{h}\leq0,
\end{split}
\end{equation*}
then we apply the comparison principle of the second-order parabolic equation to get
\begin{equation}\label{3-31}
y_-(t)\leq\theta(t,x)\leq y_+(t), \quad\forall~t\geq0,\,~x\in\mathbb{T}^2.
\end{equation}

On the other hand, it follows from Proposition \ref{prop-4} and (\ref{3-26}) that
\begin{equation}\label{3-32}
\int^t_0|\phi(y_{\pm}(\tau))|^2\,d\tau\leq C_T (1+\|h_\pm\|^2_{L^2([0,T])})\leq C_T.
\end{equation}
Setting $\widetilde{\phi}(z):=\phi(z)-\phi(0)+\alpha\, z$ for $z \in (-1, 1)$, we have $\widetilde{\phi}'(z)>0 $ for any $z \in (-1, 1)$, which implies that $\widetilde{\phi}(\cdot)$ is increasing in $(-1, 1)$.
Therefore, from \eqref{3-26}, \eqref{3-31}, and \eqref{3-32}, we deduce
\begin{equation*}\label{3-33}
\begin{split}
&\int^t_0\|\phi(\theta(\tau))\|^2_{L^\infty}\, d\tau\leq \int^t_0\|\widetilde{\phi}(\theta(\tau))\|^2_{L^\infty}\, d\tau+C\int^t_0(1+\|\theta(\tau)\|^2_{L^\infty})\, d\tau\\
& \leq \int^t_0|\widetilde{\phi}(y_{+}(\tau))|^2\,d\tau+\int^t_0|\widetilde{\phi}(y_{-}(\tau))|^2\,d\tau+C_T\\
&\leq \int^t_0(|{\phi}(y_{+}(\tau))|^2+|{\phi}(y_{-}(\tau))|^2+C|y_{+}(\tau)|^2+C|y_{-}(\tau)|^2+C)\,d\tau+C_T\leq C_{T},
\end{split}
\end{equation*}
which leads to $\|\theta(t)\|_{L^{\infty}(\mathbb{T}^2)} <1$ for almost every $ t \in (0, T)$ according to Assumption \ref{assumption-1}.
This ends the proof of Lemma \ref{col-uni-ch}.
\end{proof}

In order to separate $\theta$ from the singular points of $\phi$, we need to investigate the integrability of $|\phi'(\theta)|$, which is stated as follows.

\begin{lem}\label{lem-Uni-1}
We assume that the non-linearity $\phi=\Phi'$ satisfies (\ref{1-9}). Then, for $\forall \, p\in[1,+\infty)$, the following estimate holds:
\begin{equation*}
\int^t_0\int_{\mathbb{T}^2}|\phi'(\theta)|^p dxds\leq C_{p, T}, \quad \forall \, t \in (0, T),
\end{equation*}
where the constant $C_{p,T}$ is independent of $\varepsilon \in (0, 1)$.
\end{lem}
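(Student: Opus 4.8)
The plan is to exploit the structural bound $|\phi'(s)| \leq C_1 e^{C_2|\phi(s)|} + C_3$ from \eqref{1-9}, which reduces the claim to an exponential-integrability estimate for $\phi(\theta)$. Writing $|\phi'(\theta)|^p \lesssim e^{pC_2|\phi(\theta)|} + C_3^p$, it suffices to show that $\int_0^t \int_{\mathbb{T}^2} e^{pC_2|\phi(\theta(s,x))|}\,dx\,ds \leq C_{p,T}$ for each fixed $p$. The tool for the spatial integral is the Orlicz embedding theorem in the form of Lemma \ref{thm-O-1}: for any $\beta>0$, $\int_{\mathbb{T}^2} e^{\beta|v|}\,dx \leq C_{\beta} e^{C_{\beta}\|v\|_{H^1(\mathbb{T}^2)}^2}$ for all $v \in H^1(\mathbb{T}^2)$. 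Thus the missing ingredient is control of $\phi(\theta)$ in $L^2([0,T]; H^1(\mathbb{T}^2))$, or equivalently of $\|\phi(\theta(s))\|_{H^1}^2$ as an integrable-in-time quantity.

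First I would derive the $L^2([0,T];H^1)$ bound for $\phi(\theta)$. From Lemma \ref{col-uni-ch} we already have $\int_0^T \|\phi(\theta)\|_{L^\infty(\mathbb{T}^2)}^2\,d\tau \leq C_T$, which handles the $L^2$ part of the $H^1$ norm. For the gradient, $\nabla \phi(\theta) = \phi'(\theta)\nabla\theta$, and from Lemma \ref{thm-uni-ch} we have $\theta \in L^\infty([0,T]; H^2(\mathbb{T}^2))$ hence $\nabla\theta \in L^\infty([0,T]; L^\infty(\mathbb{T}^2))$ by Sobolev embedding on $\mathbb{T}^2$ — wait, $H^2(\mathbb{T}^2) \hookrightarrow W^{1,q}$ for all finite $q$ but not quite $W^{1,\infty}$; more carefully, $\nabla\theta \in L^\infty([0,T]; W^{1,2}(\mathbb{T}^2)) \hookrightarrow L^\infty([0,T]; L^q(\mathbb{T}^2))$ for all $q<\infty$. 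Then $\|\nabla\phi(\theta)\|_{L^2}^2 \leq \|\phi'(\theta)\|_{L^r}^2 \|\nabla\theta\|_{L^{2r'}}^2$ with a suitable Hölder exponent; and $\|\phi'(\theta)\|_{L^r}$ is bounded by the exponential bound in \eqref{1-9} together with $\|\phi(\theta)\|_{L^\infty}$. So $\|\nabla\phi(\theta)(s)\|_{L^2}^2 \lesssim \left(e^{rC_2\|\phi(\theta)(s)\|_{L^\infty}} + C_3^r\right)^{2/r}\|\nabla\theta(s)\|_{L^{2r'}}^2$. This is not yet integrable in $s$ because of the exponential in the $L^\infty$ norm of $\phi(\theta)$. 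I would instead bootstrap: use Lemma \ref{thm-O-1} directly on $v = \phi(\theta(s))$ applied with $\beta = pC_2$, giving $\int_{\mathbb{T}^2}e^{pC_2|\phi(\theta(s))|}\,dx \leq C e^{C\|\phi(\theta)(s)\|_{H^1}^2}$, and then I need $\|\phi(\theta)(s)\|_{H^1}^2$ to be bounded \emph{uniformly in $s$}, not merely integrable. That uniform bound would come from the $H^2$-bound on $\theta$ combined with the $L^\infty$-bound on $\phi(\theta)$ — but again the latter is only $L^2$ in time.

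The cleaner route, and the one I would actually pursue: show $\phi(\theta) \in L^2([0,T]; H^1(\mathbb{T}^2))$ and then integrate the Orlicz inequality in time in its \emph{linearized} form. Precisely, because $\|\phi(\theta)(s)\|_{H^1}$ need not be uniformly bounded, I would exploit that for the relevant range one has (after possibly rescaling $\beta$) a bound of the form $\int_{\mathbb{T}^2}e^{\beta|v|}\,dx \leq C_\beta(1 + \|v\|_{H^1}^2)e^{C\|v\|_{L^\infty}}$ — or, even simpler, just bound pointwise in $x$: $e^{pC_2|\phi(\theta)|} \leq e^{pC_2\|\phi(\theta)(s)\|_{L^\infty}}$ so that $\int_{\mathbb{T}^2}e^{pC_2|\phi(\theta)|}\,dx \leq |\mathbb{T}^2|\,e^{pC_2\|\phi(\theta)(s)\|_{L^\infty}}$, and then the time integral is $\int_0^t e^{pC_2\|\phi(\theta)(s)\|_{L^\infty}}\,ds$. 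To make this finite I would need $\|\phi(\theta)(s)\|_{L^\infty}$ bounded \emph{uniformly}, which I do \emph{not} have; hence this naive bound fails and I am forced back to the Orlicz route. So the genuine plan is: (i) establish $\phi(\theta) \in L^\infty([0,T]; H^1(\mathbb{T}^2))$ — not just $L^2$ — by writing $\|\nabla\phi(\theta)(s)\|_{L^2} = \|\phi'(\theta)\nabla\theta(s)\|_{L^2}$ and using $\phi'(\theta) \geq -\alpha$ together with the identity $\int \phi'(\theta)|\nabla\theta|^2 = -\int\Delta\theta\,(\phi(\theta)-m(\phi(\theta)))$ — more precisely, multiply $\mu = \phi(\theta) - \Delta\theta$ by appropriate test functions to extract $\|\phi'(\theta)^{1/2}\nabla\theta\|_{L^2}$ and convert via the exponential bound; (ii) apply Lemma \ref{thm-O-1} with $v = pC_2\,\phi(\theta(s))$ to get $\int_{\mathbb{T}^2}|\phi'(\theta(s))|^p\,dx \lesssim 1 + \int_{\mathbb{T}^2}e^{pC_2|\phi(\theta(s))|}\,dx \lesssim 1 + C\,e^{C\|\phi(\theta)(s)\|_{H^1}^2}$; (iii) with $\|\phi(\theta)\|_{L^\infty([0,T];H^1)} \leq K_T$ uniform in $\varepsilon$ (this is the key point, and it does hold: $\Delta\theta \in L^\infty([0,T];L^2)$ from Lemma \ref{thm-uni-ch}, $\phi(\theta)\in L^\infty([0,T];L^2)$ from \eqref{3-21}, and $\nabla\phi(\theta)$ is controlled via the chemical-potential structure plus $\nabla\mu \in L^2$ — actually the simplest uniform $H^1$ bound on $\phi(\theta)$ comes from $\phi(\theta) = \mu + \Delta\theta$ with $\mu, \Delta\theta$ both in $L^\infty([0,T];L^2)$, giving an $L^2$ bound, and then one more derivative via $\nabla\mu \in L^2([0,T];L^2)$... ), conclude $\int_0^t\int_{\mathbb{T}^2}|\phi'(\theta)|^p\,dx\,ds \leq T(1 + C e^{CK_T^2}) =: C_{p,T}$, independent of $\varepsilon$. \textbf{The main obstacle} is step (i): securing a bound on $\phi(\theta)$ in $L^\infty([0,T]; H^1(\mathbb{T}^2))$ uniform in $\varepsilon \in (0,1)$, since the gradient term $\phi'(\theta)\nabla\theta$ is precisely where the singularity of $\phi$ near $\pm 1$ bites; here I expect to use the already-established separation $\|\theta\|_{L^\infty((0,T)\times\mathbb{T}^2)} < 1$ from Lemma \ref{col-uni-ch} only in its time-averaged form, combined with the energy identity for $\|\nabla\mu\|_{L^2}^2$ from Lemma \ref{lem-unif-u-1} and the coercivity $\phi' \geq -\alpha$, to absorb $\int\phi'(\theta)|\nabla\theta|^2$.
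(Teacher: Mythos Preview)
Your proposal has a genuine gap at precisely the point you yourself flag as ``the main obstacle.'' You want to apply the Orlicz embedding (Lemma~\ref{thm-O-1}) to $v=\phi(\theta(s))$, which requires a uniform-in-$\varepsilon$ bound on $\|\phi(\theta)\|_{L^\infty_t H^1_x}$. But $\nabla\phi(\theta)=\phi'(\theta)\nabla\theta$ involves exactly the singular factor $\phi'(\theta)$ you are trying to estimate, so the argument is circular. The alternative routes you sketch do not close: writing $\nabla\phi(\theta)=\nabla\mu+\nabla\Delta\theta$ from $\mu=\phi(\theta)-\Delta\theta$ would need $\theta\in L^\infty_t H^3_x$, which is not available from Lemma~\ref{thm-uni-ch} (you only have $\theta\in L^\infty_t H^2_x$); and controlling $\int\phi'(\theta)|\nabla\theta|^2$ via the energy identity gives information on $(\phi'(\theta))^{1/2}\nabla\theta$, not on $\phi'(\theta)\nabla\theta$. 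Even if you downgraded to $\phi(\theta)\in L^2_t H^1_x$, integrating $e^{C\|\phi(\theta)(s)\|_{H^1}^2}$ in time would still fail.

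The paper sidesteps this by applying the Orlicz embedding to a \emph{different} function. Rewrite \eqref{3-3} as the second-order parabolic equation $\varepsilon\partial_t\theta-\Delta\theta+\widetilde\phi(\theta)=\bar h$, where $\widetilde\phi(z)=\phi(z)-\phi(0)+\alpha z$ is monotone and $\bar h:=m(\phi(\theta))-(-\Delta)^{-1}\partial_t\theta-(-\Delta)^{-1}(u\cdot\nabla\theta)+\alpha\theta-\phi(0)$. The crucial point is that $\bar h\in L^\infty([0,T];H^1)$ uniformly in $\varepsilon$, by Lemma~\ref{thm-uni-ch}. Now multiply the equation by the test function $\widetilde\phi(\theta)e^{L|\widetilde\phi(\theta)|}$ and integrate over $(0,t)\times\mathbb{T}^2$: the $-\Delta\theta$ contribution is nonnegative because $\widetilde\phi'\ge 0$, and one obtains
\[
\int_0^t\!\!\int_{\mathbb{T}^2}|\widetilde\phi(\theta)|^2 e^{L|\widetilde\phi(\theta)|}\,dx\,d\tau
\le \int_0^t\!\!\int_{\mathbb{T}^2}|\bar h|\,|\widetilde\phi(\theta)|\,e^{L|\widetilde\phi(\theta)|}\,dx\,d\tau + C_{T,L}.
\]
The right-hand side is split by the Orlicz--Young inequality (Proposition~\ref{prop-O-Y}), yielding $|\bar h||\widetilde\phi|e^{L|\widetilde\phi|}\le \tfrac12|\widetilde\phi|^2e^{L|\widetilde\phi|}+e^{N|\bar h|}$ for $N=N(L)$ large. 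The first piece is absorbed on the left; the second is handled by Lemma~\ref{thm-O-1} applied to $\bar h$, using its $L^\infty_t H^1_x$ bound. This gives $\int_0^t\int e^{L|\phi(\theta)|}\,dx\,d\tau\le C_{T,L}$ for every $L>0$, and the conclusion follows from $|\phi'(s)|\le C_1e^{C_2|\phi(s)|}+C_3$. The key idea you are missing is to test the PDE against a nonlinear function of $\theta$ designed to produce exponential integrability of $\phi(\theta)$ directly, and to reserve the Orlicz embedding for the forcing $\bar h$, whose $H^1$ regularity is already in hand.
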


\begin{proof}
As in the proof of Lemma \ref{col-uni-ch}, we first denote $\widetilde{\phi}(z):=\phi(z)-\phi(0)+\alpha\, z$ for $z \in (-1, 1)$, which yields $\widetilde{\phi}'(z)\geq 0 $ for any $z \in (-1, 1)$. Rewrite \eqref{3-3} as
\begin{equation}\label{3-35}
\varepsilon\partial_t\theta-\Delta\theta+\widetilde{\phi}(\theta)
=\bar{h}:=m(\phi(\theta))-(-\Delta)^{-1}\partial_t\theta-(-\Delta)^{-1}((u\cdot\nabla)\theta)+\alpha\theta-\phi(0).
\end{equation}
where the function $\bar{h}$ satisfies
\begin{equation}\label{3-36}
\|\bar{h}\|_{L^\infty([0, T]; H^1(\mathbb{T}^2))}\leq C_{T}
\end{equation}
due to the estimate \eqref{3-6}.

Let $L>0$ be an arbitrary positive number, and $\Phi_L(z):=\int^z_0\widetilde{\phi}(\eta)e^{L|\widetilde{\phi}(\eta)|}d\eta\geq 0$ for any $z\in (-1, 1)$. Multiply \eqref{3-35} by $\widetilde{\phi}(\theta)e^{L|\widetilde{\phi}(\theta)|}$, and then integrate over
$(0,t)\times\mathbb{T}^2$, we have
\begin{equation}\label{3-38}
\begin{split}
&\varepsilon\int_{\mathbb{T}^2} \Phi_L(\theta(t))dx-\int^t_0\int_{\mathbb{T}^2} \Delta\theta\widetilde{\phi}(\theta(\tau))e^{L|\widetilde{\phi}(\theta(\tau))|}\,dxd\tau
+\int^t_0\int_{\mathbb{T}^2} |\widetilde{\phi}(\theta(\tau))|^2e^{L|\widetilde{\phi}(\theta(\tau))|}\,dxd\tau
\\
=&\int^t_0\int_{\mathbb{T}^2} \bar{h}\widetilde{\phi}(\theta(\tau))e^{L|\widetilde{\phi}(\theta(\tau))|}\,dxd\tau+\varepsilon\int_{\mathbb{T}^2} \Phi_L(\theta(0))\,dx,
\end{split}
\end{equation}
Since $\widetilde{\phi}'(z)>0 $ for any $z \in (-1, 1)$, one can get
\begin{equation*}\label{3-39}
-\int^t_0\int_{\mathbb{T}^2} \Delta\theta\widetilde{\phi}(\theta)e^{L|\widetilde{\phi}(\theta)|}\,dx
=\int^t_0\int_{\mathbb{T}^2} |\nabla\theta|^2\widetilde{\phi}'(\theta)[1+L|\widetilde{\phi}(\theta)|]e^{L|\tilde{\phi}(\theta)|}\,dxd\tau\geq 0,
\end{equation*}
which follows from \eqref{3-38} that
\begin{equation}\label{3-40}
\int^t_0\int_{\mathbb{T}^2} |\widetilde{\phi}(\theta(\tau))|^2e^{L|\widetilde{\phi}(\theta(\tau))|}\,dxd\tau
\leq\int^t_0\int_{\mathbb{T}^2} |\bar{h}(\tau)||\widetilde{\phi}(\theta(\tau))|e^{L|\widetilde{\phi}(\theta(\tau))|}\,dxd\tau+C_{T, L},
\end{equation}
where the constant $C_{T, L}$ is independent of $\varepsilon \in (0, 1)$. Applying Proposition \ref{prop-O-Y} to the integrand in the right-hand side of \eqref{3-40}, and taking $p=N|\bar{h}(\tau)|$ and $q=N^{-1}|\widetilde{\phi}(\theta(\tau))|e^{L|\widetilde{\phi}(\theta(\tau))|}$ in \eqref{2-4}, where $N \geq 1$ be an arbitrary positive number, we obtain from \eqref{2-5} and \eqref{2-6} that
\begin{equation}\label{3-41}
\begin{split}
|\bar{h}||\widetilde{\phi}(\theta)|e^{L|\widetilde{\phi}(\theta)|}=p\cdot q\leq A(p)+\tilde{A}(q),
\end{split}
\end{equation}
where
\begin{equation}\label{3-41-1}
\begin{split}
&A(p):=e^p-p-1 \leq e^{p}=e^{N|\bar{h}(\tau)|},\\
&\widetilde{A}(q):=(1+q)\ln(1+q)-q\leq q\ln(1+q)\\
&\leq N^{-1}|\widetilde{\phi}(\theta)|e^{L|\widetilde{\phi}(\theta)|}\ln(e^{2L|\widetilde{\phi}(\theta)|})
=\frac{2L}{N}|\widetilde{\phi}(\theta)|^2e^{L|\widetilde{\phi}(\theta)|}.
\end{split}
\end{equation}
Therefore, taking $N=N(L)$ sufficiently large in \eqref{3-41-1}, we obtain from \eqref{3-41} that
\begin{equation}
|\bar{h}||\widetilde{\phi}(\theta)|e^{L|\widetilde{\phi}(\theta)|}\leq\frac{1}{2}|\widetilde{\phi}(\theta)|^2e^{L|\widetilde{\phi}(\theta)|}+e^{N|\bar{h}|}.\label{3-42}
\end{equation}
Inserting \eqref{3-42} into the right-hand side of \eqref{3-40} yields
\begin{equation} \label{3-43}
\begin{split}
&\int^t_0\int_{\mathbb{T}^2} |\widetilde{\phi}(\theta)|^2e^{L|\widetilde{\phi}(\theta)|}\,dxd\tau\leq\int^t_0\int_{\mathbb{T}^2}  e^{N|\bar{h}|}\,dxd\tau+C_{T, L}.
\end{split}
\end{equation}
Using (\ref{3-36}), (\ref{3-43}) and Orlicz embedding theorem (Lemma \ref{thm-O-Em}), we obtain
\begin{equation}\label{3-44}
\begin{split}
&\int^t_0\int_{\mathbb{T}^2} |\widetilde{\phi}(\theta)|^2e^{L|\widetilde{\phi}(\theta)|}\,dxd\tau\leq C_{T, L}\int^t_0(e^{C(N)\|\bar{h}\|^2_{H^1}}\,d\tau+1)\leq C_{T, L}.
\end{split}
\end{equation}
Therefore, we deduce from \eqref{3-44} that
\begin{equation}\label{3-45}
\begin{split}
&\int_0^t\int_{\mathbb{T}^2}  e^{L|\phi(\theta(\tau))|}\,dxd\tau \leq\int_0^t\int_{\mathbb{T}^2}  e^{L|\widetilde{\phi}(\theta(\tau))|+C L}\,dxd\tau
\\
&\leq C\int_0^t\int_{\Omega_0}e^{(1+C)L}\, dxd\tau +C\int_0^t\int_{\mathbb{T}^2\setminus\Omega_0}|\widetilde{\phi}(\theta(\tau))|^2e^{L|\tilde{\phi}(\theta(\tau))|}\,dxd\tau \leq C_{T, L},
\end{split}
\end{equation}
where $\Omega_0:=\{x\in\mathbb{T}^2|\,\widetilde{\phi}(\theta(\tau, x))\leq1\}$.

It follows from \eqref{1-9} and \eqref{3-45} that $\forall~p\in[1,+\infty)$
\begin{equation*}\label{3-46}
\begin{split}
&\int^t_0\int_{\mathbb{T}^2}|\phi'(\theta)|^p dxd\tau \leq\int^t_0\int_{\mathbb{T}^2}\bigg(C_1e^{C_2|\phi(\theta)|}+C_3\bigg)^p \,dxd\tau
\\
&\leq C_{p}\int^t_0\int_{\mathbb{T}^2}(e^{pC_2|\phi(\theta)|}+1)\,dxds\leq C_{T, p},
\end{split}
\end{equation*}
where the constant $C_{T, p}$ is independent of $\varepsilon \in (0, 1]$. This completes the proof of Lemma \ref{lem-Uni-1}.
\end{proof}
With Lemma \ref{lem-Uni-1} in hand, we are now in a position to investigate the uniform $L^\infty$-bounds of the approximate solution away from singular points.
\begin{thm}\label{thm-Uni-M}
Under the assumptions in Lemma \ref{thm-uni-ch}, if, in addition, $\theta_0\in H^s$ with $s\geq 4$, then, there exists a positive constant $\delta=\delta(\delta_0, T)$ such that
\begin{equation}\label{3-47}
\|\theta\|_{L^\infty([0, T)\times \mathbb{T}^2)}\leq 1-\delta.
\end{equation}
\end{thm}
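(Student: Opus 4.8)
The plan is to upgrade the bound on $\partial_t\theta$ from Lemma~\ref{thm-uni-ch} so that the forcing $\tilde h$ in the second-order parabolic reformulation \eqref{3-23} lies in $L^\infty([0,T];L^\infty(\mathbb{T}^2))$ with a bound \emph{independent of $\varepsilon$}, and then to repeat the comparison argument of Lemma~\ref{col-uni-ch} but invoking the \emph{quantitative} comparison Proposition~\ref{prop-3} (which needs an $L^\infty$-in-time forcing) in place of the merely $L^2$-in-time Proposition~\ref{prop-4}.

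\textbf{Step 1 (a uniform $L^\infty_tL^2_x$ bound on $\partial_t\theta$).} Put $\varphi:=\partial_t\theta$ and differentiate the $\varepsilon$-regularized equation \eqref{3-3-0} in time:
\[
\partial_t\varphi-\varepsilon\,\partial_t\Delta\varphi+\partial_t u\cdot\nabla\theta+u\cdot\nabla\varphi=\Delta\big(\phi'(\theta)\varphi\big)-\Delta^2\varphi .
\]
Taking the $L^2(\mathbb{T}^2)$ inner product with $\varphi$ and using $\dive u=0$ gives
\[
\tfrac12\tfrac{d}{dt}\big(\|\varphi\|_{L^2}^2+\varepsilon\|\nabla\varphi\|_{L^2}^2\big)+\|\Delta\varphi\|_{L^2}^2=-\int_{\mathbb{T}^2}\partial_t u\cdot\nabla\theta\,\varphi\,dx+\int_{\mathbb{T}^2}\phi'(\theta)\,\varphi\,\Delta\varphi\,dx .
\]
For the last integral I would estimate directly $\big|\int\phi'(\theta)\varphi\Delta\varphi\big|\le\|\phi'(\theta)\|_{L^p}\|\varphi\|_{L^q}\|\Delta\varphi\|_{L^2}$ with $\tfrac1p+\tfrac1q=\tfrac12$, interpolate $\|\varphi\|_{L^q}$ between $\|\varphi\|_{L^2}$ and $\|\Delta\varphi\|_{L^2}$ by two-dimensional Gagliardo--Nirenberg, and absorb a small multiple of $\|\Delta\varphi\|_{L^2}^2$ by Young; this leaves a term $c(t)\|\varphi\|_{L^2}^2$ whose coefficient $c$ is a fixed power of $\|\phi'(\theta)\|_{L^p}$ and hence belongs to $L^1([0,T])$ by the $\varepsilon$-independent bound of Lemma~\ref{lem-Uni-1}. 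The coupling term is controlled by $\big|\int\partial_t u\cdot\nabla\theta\,\varphi\big|\le\|\partial_t u\|_{L^2}\|\nabla\theta\|_{L^4}\|\varphi\|_{L^4}$ together with $\nabla\theta\in L^\infty([0,T];L^4)$ (from $\theta\in L^\infty([0,T];H^2)$ in Lemma~\ref{thm-uni-ch}), Gagliardo--Nirenberg and Young, giving $\eta\|\Delta\varphi\|_{L^2}^2+a(t)(1+\|\varphi\|_{L^2}^2)$ with $a\in L^1([0,T])$ because $\partial_t u\in L^2([0,T];L^2)$. A Gronwall argument then yields
\[
\sup_{\tau\in[0,t]}\|\partial_t\theta(\tau)\|_{L^2}^2+\int_0^t\|\Delta\partial_t\theta(\tau)\|_{L^2}^2\,d\tau\le C(\|\theta_0\|_{H^s},\delta_0,T),\qquad\varepsilon\in(0,1],
\]
\emph{provided} the (in general $\varepsilon$-dependent) initial value $\partial_t\theta(0)=(1-\varepsilon\Delta)^{-1}\big(\Delta\phi(\theta_0)-\Delta^2\theta_0-u_0\cdot\nabla\theta_0\big)$ is bounded in $L^2$ uniformly in $\varepsilon$. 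This is precisely where $\theta_0\in H^s$ with $s\ge4$ enters: $\|(1-\varepsilon\Delta)^{-1}\|_{L^2\to L^2}\le1$, $\Delta^2\theta_0\in L^2$, $u_0\cdot\nabla\theta_0\in L^2$, and since $\|\theta_0\|_{L^\infty}\le1-\delta_0$ the values $\phi'(\theta_0),\phi''(\theta_0)$ (hence $\Delta\phi(\theta_0)=\phi'(\theta_0)\Delta\theta_0+\phi''(\theta_0)|\nabla\theta_0|^2$) are controlled by constants depending only on $\delta_0$ and $\|\theta_0\|_{H^2}$.

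\textbf{Step 2 ($\tilde h\in L^\infty_tL^\infty_x$, and conclusion).} From \eqref{3-23}, $\tilde h=m(\phi(\theta))-(-\Delta)^{-1}\partial_t\theta-(-\Delta)^{-1}\dive(u\theta)$, with $m(\theta)\equiv m(\partial_t\theta)\equiv0$. By \eqref{3-19-1} the mean $m(\phi(\theta))$ is bounded uniformly in $(t,\varepsilon)$; Step 1 with $(-\Delta)^{-1}\colon L^2\to H^2$ and the two-dimensional embedding $H^2\hookrightarrow L^\infty$ gives $(-\Delta)^{-1}\partial_t\theta\in L^\infty([0,T];L^\infty(\mathbb{T}^2))$; and $u\in L^\infty([0,T];H^1)$, $\theta\in L^\infty([0,T];H^2)\hookrightarrow L^\infty([0,T];L^\infty)$ give $u\theta\in L^\infty([0,T];H^1)$, so $(-\Delta)^{-1}\dive(u\theta)\in L^\infty([0,T];H^2)\hookrightarrow L^\infty([0,T];L^\infty)$. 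Hence $\|\tilde h\|_{L^\infty([0,T];L^\infty(\mathbb{T}^2))}\le M$ with $M=M(\|\theta_0\|_{H^s},\delta_0,T)$ independent of $\varepsilon$. Now, as in Lemma~\ref{col-uni-ch}, set $h_\pm(t):=\pm\|\tilde h(t)\|_{L^\infty(\mathbb{T}^2)}\in L^\infty([0,T])$ and let $y_\pm$ solve $\varepsilon y_\pm'+\phi(y_\pm)=h_\pm$, $y_\pm(0)=\pm\|\theta_0\|_{L^\infty}$; by \eqref{2-12-0} these stay in $(-1,1)$, and the comparison principle for the second-order parabolic operator $\mathcal{L}v:=\varepsilon\partial_t v-\Delta v+\phi(v)-\tilde h$ (using $\mathcal{L}\theta=0$, $\mathcal{L}y_+\ge0\ge\mathcal{L}y_-$ and $y_-(0)\le\theta_0\le y_+(0)$) yields $y_-(t)\le\theta(t,x)\le y_+(t)$ on $[0,T)\times\mathbb{T}^2$. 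Since $|y_\pm(0)|\le1-\delta_0$ and $\|h_\pm\|_{L^\infty([0,T])}\le M$, Proposition~\ref{prop-3} produces $\delta=\delta(\delta_0,M)>0$, independent of $\varepsilon$, with $|y_\pm(t)|\le1-\delta$ for $t\in[0,T]$; therefore $\|\theta\|_{L^\infty([0,T)\times\mathbb{T}^2)}\le1-\delta$, which is \eqref{3-47}.

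\textbf{Main obstacle.} The crux is Step 1: closing the time-differentiated energy estimate \emph{uniformly in $\varepsilon$}, which forces a careful Gagliardo--Nirenberg/Young interpolation of $\int\phi'(\theta)\varphi\Delta\varphi$ against the only-$L^p$ (never $L^\infty$) control of $\phi'(\theta)$ coming from Lemma~\ref{lem-Uni-1}, plus the verification that the $\varepsilon$-dependent datum $\partial_t\theta(0)$ stays bounded as $\varepsilon\to0$ — the single reason the extra hypothesis $\theta_0\in H^s$, $s\ge4$, is imposed. Everything after that (Step 2 and the comparison) is a routine repetition of the scheme of Lemma~\ref{col-uni-ch} with the sharper, time-uniform bound on $\tilde h$.
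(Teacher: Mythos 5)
Your proposal is correct and follows essentially the same route as the paper: an $H^1$-type energy estimate on $\varphi=\partial_t\theta$ for the time-differentiated $\varepsilon$-regularized equation, closed via Gronwall using the $\varepsilon$-uniform $L^p$ control of $\phi'(\theta)$ from Lemma~\ref{lem-Uni-1}, which upgrades $\tilde h$ to $L^\infty_tL^\infty_x$ and lets Proposition~\ref{prop-3} replace Proposition~\ref{prop-4} in the comparison argument of Lemma~\ref{col-uni-ch}. The only differences are cosmetic (the paper integrates the coupling term by parts to $-\int\theta\,u_t\cdot\nabla\varphi$ and fixes $p=q=4$ in the H\"older step), and you are in fact more explicit than the paper about why $s\ge4$ is needed to bound the $\varepsilon$-dependent datum $\partial_t\theta(0)$ uniformly.
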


\begin{proof}
We differentiate \eqref{3-3} with respect to $t$ and set $\varphi:=\partial_t\theta$. Then we have
\begin{equation}\label{3-48}
\begin{cases}
&\varepsilon\partial_t\varphi+(-\Delta)^{-1}\partial_t((u\cdot\nabla)\theta+\varphi)-\Delta\varphi=m(\phi'(\theta)\varphi)-\phi'(\theta)\varphi, \quad \mbox{in} \quad (0,T) \times \mathbb{T}^2,
\\
&\varphi|_{t=0}=\varphi_0.
\end{cases}
\end{equation}
Multiplying $-\Delta\varphi$ on \eqref{3-48} and integrating over $\mathbb{T}^2$, we derive from $m(\Delta\varphi)=0$ that
\begin{equation}\label{3-49}
\begin{split}
&\frac{1}{2}\frac{d}{dt}(\varepsilon\|\nabla\varphi\|^2_{L^2}+\|\varphi\|^2_{L^2})+\|\Delta\varphi\|^2_{L^2}
\\
&=\int_{\mathbb{T}^2} ((u_t\cdot\nabla)\theta+u\cdot\nabla\varphi)\varphi\, dx+\int_{\mathbb{T}^2} \phi'(\theta)\varphi\Delta\varphi dx
\end{split}
\end{equation}
Thanks to $\dive\, u=0$, we get
\begin{equation}\label{3-50}
\begin{split}
&\int_{\mathbb{T}^2} ((u_t\cdot\nabla)\theta+u\cdot\nabla\varphi)\varphi \,dx=-\int_{\mathbb{T}^2}  \theta\, u_t\cdot \nabla \varphi \,dx\leq C\|u_t\|_{L^2}\|\theta\|_{L^\infty}\|\nabla\varphi\|_{L^2}
\\
&\leq C\|u_t\|_{L^2}^2+C\|\nabla\varphi\|^2_{L^2}\leq \frac{1}{8}\|\Delta\varphi\|_{L^2}^2 +C(\|u_t\|_{L^2}^2+\|\varphi\|^2_{L^2}).
\end{split}
\end{equation}
Note that
\begin{equation*}\label{3-51}
\begin{split}
&\int_{\mathbb{T}^2} \phi'(\theta)\varphi\Delta\varphi \,dx=\int_{\mathbb{T}^2} \phi'(\theta)\varphi\Delta\varphi \,dx\leq \frac{1}{8}\|\Delta\varphi\|_{L^2}^2+C \int_{\mathbb{T}^2} |\phi'(\theta)\varphi|^2\,dx,
\end{split}
\end{equation*}
which, together with
\begin{equation*}
\begin{split}
&C\int_{\mathbb{T}^2} |\phi'(\theta)\varphi|^2\,dx\leq C\|\phi'(\theta)\|^2_{L^4}\|\varphi\|^2_{L^4}\leq C\|\phi'(\theta)\|^2_{L^4}\|\varphi\|_{L^2}\|\nabla\varphi\|_{L^2}
\\
&\leq C\|\phi'(\theta)\|^4_{L^4}\|\varphi\|^2_{L^2}+\|\nabla\varphi\|^2_{L^2}\leq \frac{1}{8}\|\Delta\varphi\|^2_{L^2}+C (\|\phi'(\theta)\|^4_{L^4}+1)\|\varphi\|^2_{L^2},
\end{split}
\end{equation*}
follows that
\begin{equation}\label{3-52}
\begin{split}
&\int_{\mathbb{T}^2} \phi'(\theta)\varphi\Delta\varphi \,dx\leq \frac{1}{4}\|\Delta\varphi\|_{L^2}^2+C (\|\phi'(\theta)\|^4_{L^4}+1)\|\varphi\|^2_{L^2}.
\end{split}
\end{equation}
Substituting \eqref{3-50} and \eqref{3-52} into \eqref{3-49} yields
\begin{equation*}\label{3-53}
\frac{d}{dt}(\varepsilon\|\nabla\varphi\|^2_{L^2}+\|\varphi\|^2_{L^2})+\|\Delta\varphi\|^2_{L^2}\leq C (\|\phi'(\theta)\|^4_{L^4}+1)\|\varphi\|^2_{L^2}+C \|u_t\|^2_{L^2}.
\end{equation*}
Hence, thanks to Gronwall's inequality and Lemma \ref{lem-Uni-1}, we get
\begin{equation*}\label{3-54}
\begin{split}
&\sup_{\tau\in [0, t]}(\varepsilon\|\nabla\varphi(\tau)\|^2_{L^2}+\|\varphi(\tau)\|^2_{L^2})+\int^t_0\|\Delta\varphi(\tau)\|^2_{L^2}\, d\tau
\\
&\leq C(\varepsilon\|\nabla\varphi(0)\|^2_{L^2}+\|\varphi(0)\|^2_{L^2}+\int^t_0\|u_t\|^2_{L^2}\,d\tau)e^{C\int^t_0(\|\phi'(\theta)\|^4_{L^4}+1)\,d\tau}
 \leq C_{T},
\end{split}
\end{equation*}
where $C_{T}$ is independent of $\varepsilon \in (0, 1)$, in particular,
\begin{equation}\label{3-54-1}
\|\partial_t\theta\|^2_{L^\infty([0, T]; L^2(\mathbb{T}^2))}\leq C_{T}.
\end{equation}
From this, similar to estimates \eqref{3-24} and \eqref{3-25}, we may improve \eqref{3-26} to
\begin{equation}\label{3-55}
\|\tilde{h}(t)\|_{L^{\infty}([0,T];L^\infty(\mathbb{T}^2))}\leq C_{T},
\end{equation}
where $C_{T}$ is independent of $\varepsilon\in (0, 1)$.

Indeed, note that
\begin{equation}\label{3-55-1}
\tilde{h}=m(\phi(\theta))-(-\Delta)^{-1}\partial_t\theta-(-\Delta)^{-1}((u\cdot\nabla)\theta),
\end{equation}
then we get from \eqref{3-54-1} that
\begin{equation*}\label{3-55-2}
\begin{split}
&\|(-\Delta)^{-1}\partial_t\theta(t)\|^2_{L^{\infty}([0,T];L^\infty(\mathbb{T}^2))}\\
& \leq \|(-\Delta)^{-1}\partial_t\theta\|_{L^{\infty}([0,T];L^2)}\|\partial_t\theta\|_{L^{\infty}([0,T];L^2)}\, d\tau\leq \|\partial_t\theta\|^2_{L^{\infty}([0,T];L^2)}\leq C_T,
\end{split}
\end{equation*}
and
\begin{equation*}\label{3-55-3}
\begin{split}
&\|(-\Delta)^{-1}((u\cdot\nabla)\theta)\|^2_{L^{\infty}([0,T];L^\infty(\mathbb{T}^2))}
\leq C\|(-\Delta)^{-1}\dive\,(u\theta)\|^2_{L^{\infty}([0,T];L^\infty)}
\\
&\leq C\|u\theta\|_{L^{\infty}([0,T]; L^4)}^2 \leq C\|u\|^2_{L^{\infty}([0,T]; L^{4})}\|\theta\|^2_{L^\infty([0,T] \times \mathbb{T}^2)}\leq C_T,
\end{split}
\end{equation*}
which along with \eqref{3-19-1} and \eqref{3-55-1} implies \eqref{3-55}.

Therefore, thanks to Proposition \ref{prop-3} and \eqref{3-55}, for two auxiliary ODEs:
\begin{equation*}
\begin{cases}
&\varepsilon y'_{\pm}+\phi(y_{\pm})=h_{\pm}, \quad \forall~t\geq0,
\\
&y_{\pm}(0)=\pm\|\theta_0\|_{L^\infty(\mathbb{T}^2)}
\end{cases}
\end{equation*}
with $h_{\pm}(t):=\|\tilde{h}(t)\|_{L^\infty(\mathbb{T}^2)}$ , one can get
\begin{equation*}\label{3-27-1}
\|y_{\pm}(t)\|_{L^{\infty}((0, T))}\leq 1-\delta_T
\end{equation*}
for some $\delta_T \in (0, 1)$, from which, it results in \eqref{3-31}, and also \eqref{3-47} holds true.
This ends the proof of Theorem \ref{thm-Uni-M}.
\end{proof}

Let's now claim that \eqref{3-47} in Theorem \ref{thm-Uni-M} holds for the solution $\theta$ of the original Cahn-Hilliard equation \eqref{3-1}.
\begin{thm}\label{thm-Uni-sep}
Let $\theta_0\in H^s$ with $s\geq 4$, $\int_{\mathbb{T}^2} \theta_0\, dx=0$, and $\|\theta_0\|_{L^\infty(\mathbb{T}^2)}\leq 1-\delta_0$ for $\delta_0\in(0,1)$, $0<T<+\infty$, $u\in L^{\infty}([0,T); H^1(\mathbb{T}^2))\cap L^2([0,T]; H^2(\mathbb{T}^2))$ with $\partial_t u\in L^2([0,T]; L^2(\mathbb{T}^2))$, $\dive\, u=0$,
and the function $\phi=\Phi'$ with $\Phi$ in \eqref{1-9}. Then there exists a solution $\theta$ to \eqref{3-1} on $[0,T)$ satisfying \eqref{3-56-2},
$\theta \in \mathbb{C}([0,T); \mathbb{D}_{0}))$, $\partial_t \theta \in L^2([0,T); H^{1}(\mathbb{T}^2))$, and also there is a positive constant $\delta=\delta(\delta_0, T)$ such that
\begin{equation}\label{c5-7-1}
\|\theta\|_{L^\infty([0, T)\times \mathbb{T}^2)}\leq1-\delta.
\end{equation}
Moreover, if there is another solution to $\bar{\theta}$ to \eqref{3-1} on $[0,T)$ satisfying \eqref{3-56-2},
$\bar{\theta}\in {C}([0,T); H^2))$, $\nabla\mu( \bar{\theta}) \in L^2([0,T); L^2(\mathbb{T}^2))$, then $\bar\theta\equiv\theta $ on $[0,T)\times \mathbb{T}^2$.
\end{thm}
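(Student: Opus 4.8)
The plan is to construct $\theta$ as the vanishing--viscosity limit, $\varepsilon\to0^+$, of the solutions $\theta_\varepsilon$ of the regularized problem \eqref{3-3}, using the $\varepsilon$-uniform estimates already obtained in Lemmas \ref{thm-uni-ch}--\ref{lem-Uni-1} and in Theorem \ref{thm-Uni-M}; the separation bound \eqref{c5-7-1} will then be nothing but the limit of the uniform bound \eqref{3-47}. For uniqueness I would perform a $\dot H^{-1}$ energy estimate on the difference of two solutions, in which the only property of $\phi$ that is used is the one-sided bound $\phi'\ge-\alpha$ from \eqref{1-9}.

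\emph{Existence and regularity.} Since $\theta_0\in H^s$ with $s\ge4$ is continuous and bounded away from $\pm1$, one checks $\theta_0\in\mathbb{D}_{0,\varepsilon}$ for every $\varepsilon\in(0,1]$; hence \eqref{3-3} admits a local smooth solution $\theta_\varepsilon$, and by the continuation criterion \eqref{3-5-1} together with the $\varepsilon$-independent separation estimate \eqref{3-47} this solution is global on $[0,T)$. By Lemma \ref{thm-uni-ch} and Remark \ref{rmk-indep-1}, $\{\theta_\varepsilon\}$ is bounded in $L^\infty(0,T;H^2(\mathbb{T}^2))$ uniformly in $\varepsilon$; by the estimates in the proof of Theorem \ref{thm-Uni-M}, in particular \eqref{3-54-1}, $\{\partial_t\theta_\varepsilon\}$ is bounded in $L^\infty(0,T;L^2(\mathbb{T}^2))\cap L^2(0,T;H^2(\mathbb{T}^2))$; and Theorem \ref{thm-Uni-M} gives $\|\theta_\varepsilon\|_{L^\infty((0,T)\times\mathbb{T}^2)}\le1-\delta$ with $\delta=\delta(\delta_0,T)$ independent of $\varepsilon$. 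Along a sequence $\varepsilon_k\to0$ I extract $\theta_{\varepsilon_k}\overset{\ast}{\rightharpoonup}\theta$ in $L^\infty(0,T;H^2)$, $\partial_t\theta_{\varepsilon_k}\rightharpoonup\partial_t\theta$ in $L^2(0,T;H^2)$, and, by the Aubin--Lions lemma, $\theta_{\varepsilon_k}\to\theta$ strongly in $C([0,T];H^\sigma(\mathbb{T}^2))$ for every $\sigma<2$, hence in $C([0,T]\times\mathbb{T}^2)$. Thus $\|\theta\|_{L^\infty((0,T)\times\mathbb{T}^2)}\le1-\delta$, which is \eqref{c5-7-1} (and a fortiori \eqref{3-56-2}), and, $\phi$ being smooth on the compact interval $[-1+\delta,1-\delta]$ containing the ranges of all $\theta_{\varepsilon_k}$, one has $\phi(\theta_{\varepsilon_k})\to\phi(\theta)$ uniformly. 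Passing to the limit in the distributional form of \eqref{3-3-0} is then routine: $\varepsilon_k\partial_t\Delta\theta_{\varepsilon_k}\to0$ since $\partial_t\Delta\theta_{\varepsilon_k}$ stays bounded in $L^2(0,T;L^2)$, the convective term converges because $u$ is fixed and $\nabla\theta_{\varepsilon_k}\to\nabla\theta$, and $\Delta^2\theta_{\varepsilon_k},\ \Delta\phi(\theta_{\varepsilon_k})$ converge in $\mathcal{D}'$, so $\theta$ solves \eqref{3-1} with $\partial_t\theta\in L^2(0,T;H^1)$. For the $\mathbb{D}_0$-membership: from $\Delta\mu=\partial_t\theta+u\cdot\nabla\theta$ with $\partial_t\theta\in L^\infty(0,T;L^2)$ and $u\cdot\nabla\theta\in L^\infty(0,T;L^2)$ (because $u\in L^\infty(0,T;H^1)$ and $\nabla\theta\in L^\infty(0,T;H^1)\hookrightarrow L^\infty(0,T;L^q)$ for all finite $q$ in 2D), together with the bound on $m(\mu)=m(\phi(\theta))$ from Proposition \ref{prop-2}, elliptic regularity gives $\mu\in L^\infty(0,T;H^2)$, hence $\nabla(\Delta\theta-\phi(\theta))=\nabla\mu\in L^2(0,T;L^2)$; by Lemma \ref{lem-4} applied on a compact subinterval of $(-1,1)$ containing the range of $\theta$, $\phi(\theta)\in L^\infty(0,T;H^2)$, so $\Delta\theta=\phi(\theta)-\mu\in L^2(0,T;H^2)$, i.e. $\theta\in L^2(0,T;H^4)$; combined with $\partial_t\theta\in L^2(0,T;L^2)$ this yields $\theta\in C([0,T];H^2)$, and with the preceding estimates $\theta\in C([0,T);\mathbb{D}_0)$.

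\emph{Uniqueness.} Let $\bar\theta$ be a second solution of \eqref{3-1} on $[0,T)$ in the stated class and set $w:=\theta-\bar\theta$, so $m(w)\equiv0$, $w(0)=0$ and $\partial_t w+u\cdot\nabla w+\Delta^2w=\Delta(\phi(\theta)-\phi(\bar\theta))$; observe that $\bar\mu=\phi(\bar\theta)-\Delta\bar\theta$ with $\nabla\bar\mu\in L^2$ and $\bar\theta\in C([0,T);H^2)$ force $\phi(\bar\theta)\in L^2(0,T;L^2)$, and, by Remark \ref{rmk-Phi-1}, $\bar\theta\in(-1,1)$ a.e. Pairing the difference equation with $(-\Delta)^{-1}w$ and using $\dive u=0$ gives
\[
\frac12\frac{d}{dt}\|w\|_{\dot H^{-1}}^2+\|\nabla w\|_{L^2}^2=-\int_{\mathbb{T}^2}(\phi(\theta)-\phi(\bar\theta))\,w\,dx+\int_{\mathbb{T}^2}(u\,w)\cdot\nabla(-\Delta)^{-1}w\,dx .
\]
Since $s\mapsto\phi(s)+\alpha s$ is nondecreasing on $(-1,1)$ by \eqref{1-9}, $(\phi(\theta)-\phi(\bar\theta))\,w\ge-\alpha w^2$ pointwise, so the first term on the right is $\le\alpha\|w\|_{L^2}^2$; by H\"older, the Gagliardo--Nirenberg inequality $\|w\|_{L^4}\le C\|w\|_{\dot H^{-1}}^{1/4}\|\nabla w\|_{L^2}^{3/4}$ and Young's inequality, the convective term is $\le\tfrac14\|\nabla w\|_{L^2}^2+C\|u\|_{L^4}^{8/5}\|w\|_{\dot H^{-1}}^2$ with $\|u\|_{L^4}\in L^\infty(0,T)$; and $\|w\|_{L^2}^2\le\|w\|_{\dot H^{-1}}\|\nabla w\|_{L^2}$ absorbs the remaining $\|\nabla w\|_{L^2}^2$ at the cost of a further $C\|w\|_{\dot H^{-1}}^2$. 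Altogether $\frac{d}{dt}\|w\|_{\dot H^{-1}}^2\le C(t)\|w\|_{\dot H^{-1}}^2$ with $C\in L^1(0,T)$, and Gronwall with $w(0)=0$ forces $w\equiv0$, i.e. $\bar\theta\equiv\theta$ on $[0,T)\times\mathbb{T}^2$.

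\emph{Main obstacle.} The subtle point is that $\bar\theta$ is assumed only to satisfy $\|\bar\theta\|_{L^\infty}\le1$ and is not a priori separated from $\pm1$, so the uniqueness argument must never invoke a Lipschitz bound on $\phi$ (such a bound holds only on compact subintervals of $(-1,1)$) and must rely exclusively on the monotonicity $\phi'\ge-\alpha$, which bounds $(\phi(\theta)-\phi(\bar\theta))w$ from below however close $\bar\theta$ comes to the singular values. A secondary technical issue is the bootstrap $\theta\in L^2(0,T;H^4)$ underlying the $\mathbb{D}_0$-continuity, which is legitimate precisely because \eqref{c5-7-1} has already turned $\phi(\theta)$ into a smooth function of $\theta$.
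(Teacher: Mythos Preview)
Your proposal is correct and follows essentially the same approach as the paper: existence via the vanishing-viscosity limit $\varepsilon\to0$ of the approximate solutions $\theta_\varepsilon$ from \eqref{3-3}, with the separation property inherited from Theorem \ref{thm-Uni-M}, and uniqueness via a $\dot H^{-1}$ energy estimate on the difference that exploits only the one-sided bound $\phi'\ge-\alpha$. Your treatment of the convective term in the uniqueness step differs slightly---you use $\|u\|_{L^4}$ and the interpolation $\|w\|_{L^4}\lesssim\|w\|_{\dot H^{-1}}^{1/4}\|\nabla w\|_{L^2}^{3/4}$, whereas the paper uses $\|u\|_{L^\infty}$ and the interpolation $\|u\|_{L^\infty}^2\lesssim\|u\|_{L^2}\|\Delta u\|_{L^2}$---but both routes close the Gronwall loop equally well given the assumed regularity of $u$. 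You also fill in the $\mathbb{D}_0$-continuity bootstrap more explicitly than the paper does.
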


\begin{proof}
Let $\{\theta^{\varepsilon}\}_{\varepsilon>0}$ be the solution sequence of the approximate equations \eqref{3-3-0} in Theorem \ref{thm-Uni-M}, then it follows from Lemma \ref{thm-uni-ch} and Theorem \ref{thm-Uni-M} that there exists a subsequence of $\{\theta^{\varepsilon}\}_{\varepsilon>0}$ , still denoted by $\{\theta^{\varepsilon}\}_{\varepsilon>0}$,  converges, as $\varepsilon$ goes to zero,
to some function ${\theta}$, defined on $[0, T) \times \mathbb{T}^2$ satisfying \eqref{c5-7-1}, $\int_{\mathbb{T}^2} \theta(t)\, dx=0$ for any $t\in [0, T) $, ${\theta} \in {C}([0,T); \mathbb{D}_{0}))$, $\partial_t {\theta} \in L^2([0,T); H^{1}(\mathbb{T}^2))$, and solving \eqref{3-1} in the weak sense. Let's now pay attention to the proof of the uniqueness. Denote $\widetilde{\theta}:=\bar\theta-\theta$, then $\widetilde{\theta}$ solves
\begin{equation} \label{c5-7-2}
\begin{cases}
&(-\Delta)^{-1}\bigg(\partial_t\widetilde{\theta}+ \dive\,(u\,\widetilde{\theta})\bigg)-\Delta \widetilde{\theta}=-\bigg(\phi(\bar\theta)-\phi(\theta)\bigg)+\bigg(m(\phi(\bar\theta))-m(\phi(\theta))\bigg),\\\
& \qquad\qquad\qquad\qquad\qquad\qquad\qquad\qquad\qquad\qquad\qquad\forall \, (t, x) \in (0,T) \times \mathbb{T}^2,
\\
&\widetilde{\theta}|_{t=0}=0.
\end{cases}
\end{equation}
Taking the $L^2$ inner product of \eqref{c5-7-2} with $\widetilde{\theta}$ yields
\begin{equation}\label{diff-con-1}
\begin{split}
&\frac{1}{2}\frac{d}{dt}\|\widetilde{\theta}\|_{\dot H^{-1}}^2+\int_{\mathbb{T}^2}\widetilde{\theta}\,(-\Delta)^{-1} \dive\,(u\,\widetilde{\theta})\,dx+\|\widetilde{\theta}\|_{\dot H^{1}}^2+\int_{\mathbb{T}^2}(\phi(\bar\theta)-\phi(\theta))\,\widetilde{\theta}\, dx=0,
\end{split}
\end{equation}
where we have used the facts that $m(\widetilde{\theta})=0$.

Since $\phi' \geq -\alpha$, we get
\begin{equation*}
\int_{\mathbb{T}^2}(\phi(\bar\theta)-\phi(\theta))\,\widetilde{\theta}\, dx \geq -\alpha\|\widetilde{\theta}\|_{L^2}^2,
\end{equation*}
from which, we infer from \eqref{diff-con-1} that
\begin{equation*}
\begin{split}
&\frac{1}{2}\frac{d}{dt}\|\widetilde{\theta}\|_{\dot H^{-1}}^2+\|\widetilde{\theta}\|_{\dot H^{1}}^2\leq \alpha\|\widetilde{\theta}\|_{L^2}^2+\|\widetilde{\theta}\|_{\dot H^{-1}}\|u\|_{L^{\infty}} \|\widetilde{\theta}\|_{L^2} \\
&\leq \frac{1}{2}\|\nabla \widetilde{\theta}\|_{L^2}^2+C\|\widetilde{\theta}\|_{\dot H^{-1}}^2(1+\|u\|_{L^{2}}\|\Delta u\|_{L^{2}}).
\end{split}
\end{equation*}
Hence, it follows
\begin{equation*}
\begin{split}
&\frac{d}{dt}\|\widetilde{\theta}\|_{\dot H^{-1}}^2+\|\widetilde{\theta}\|_{\dot H^{1}}^2 \leq C\|\widetilde{\theta}\|_{\dot H^{-1}}^2(1+\|u\|_{L^{2}}\|\Delta u\|_{L^{2}}).
\end{split}
\end{equation*}
Gronwall's inequality gives rise to
\begin{equation*}
\begin{split}
&\sup_{t \in [0, T]}\|\widetilde{\theta}(t)\|_{\dot H^{-1}} =0,
\end{split}
\end{equation*}
which implies $\widetilde{\theta}\equiv 0$ on $[0,T)\times \mathbb{T}^2$, and then ends the proof of Theorem \ref{thm-Uni-sep}.
\end{proof}

\renewcommand{\theequation}{\thesection.\arabic{equation}}
\setcounter{equation}{0}
\section{Global well-posedness }\label{sect-6}

In this section, we prove the global well-posedness of the Navier-Stokes-Cahn-Hilliard system \eqref{NSCH-2}.

Towards this, let's first study the global $H^{s_0}$-estimate of $\theta$ for $s_0\in (1, \frac{3}{2}]$.
\begin{lem}\label{lem-theta-Hs-1}
Under the assumptions in Theorem \ref{thm-lp-nsch}, if, in addition, $s \geq 4$, then there holds
\begin{equation}\label{8-1}
\begin{split}
&\|\theta\|^2_{L^\infty([0,T); H^{s_0}(\mathbb{T}^2))}+\|\theta\|^2_{L^2((0,T); H^{2+s_0}(\mathbb{T}^2))}\leq C_T(u_0,\theta_0)
\end{split}
\end{equation}
for any $s_0\in (1, \frac{3}{2}]$.
\end{lem}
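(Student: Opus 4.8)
The plan is to carry out an $H^{s_0}$ energy estimate on the $\theta$-equation $\partial_t\theta+u\cdot\nabla\theta=\Delta(\phi(\theta)-\Delta\theta)$, exploiting the fact that, by Theorem \ref{thm-Uni-sep}, the solution has already been separated from the singular points: $\|\theta\|_{L^\infty([0,T)\times\mathbb{T}^2)}\le 1-\delta$, so $\phi$ restricted to $[-(1-\delta),1-\delta]$ is a smooth function with bounded derivatives, and Lemma \ref{lem-4} becomes available with all constants depending only on $\delta$ and $T$. First I would apply $\Lambda^{s_0}$ to the equation, take the $L^2$ inner product with $\Lambda^{s_0}\theta$, and write
\begin{equation*}
\frac12\frac{d}{dt}\|\theta\|_{H^{s_0}}^2+\|\theta\|_{\dot H^{2+s_0}}^2=-\int_{\mathbb{T}^2}\Lambda^{s_0}(u\cdot\nabla\theta)\,\Lambda^{s_0}\theta\,dx-\int_{\mathbb{T}^2}\Lambda^{s_0}\Delta\phi(\theta)\,\Lambda^{s_0}\theta\,dx,
\end{equation*}
where the bi-Laplacian contributes the good dissipative term $\|\Delta\theta\|_{H^{s_0}}^2$. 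The $\phi$-term is integrated by parts to $-\int\Lambda^{s_0}\phi(\theta)\,\Lambda^{s_0}\Delta\theta\,dx$ and controlled by $\frac18\|\Delta\theta\|_{H^{s_0}}^2+C\|\phi(\theta)\|_{H^{s_0}}^2$; since $s_0\le 3/2<s$ and $\theta$ is separated, Lemma \ref{lem-4} gives $\|\phi(\theta)\|_{H^{s_0}}\le C(\delta)(1+\|\theta\|_{L^\infty})^{\sigma}\|\theta\|_{H^{s_0}}$, and from Lemma \ref{lem-unif-u-1} we already have $\theta\in L^\infty H^1\cap L^2 H^2$, so this is harmless. The convection term is the genuine obstacle and must be treated by the commutator estimate Lemma \ref{lemma-comm}: writing $\int\Lambda^{s_0}(u\cdot\nabla\theta)\Lambda^{s_0}\theta = \int([\Lambda^{s_0},u]\cdot\nabla\theta)\Lambda^{s_0}\theta+\int(u\cdot\nabla\Lambda^{s_0}\theta)\Lambda^{s_0}\theta$, the second term vanishes by $\dive u=0$, and the first is bounded by $C(\|u\|_{H^{s_0}}\|\nabla\theta\|_{L^\infty}+\|\nabla u\|_{L^\infty}\|\nabla\theta\|_{H^{s_0-1}})\|\theta\|_{H^{s_0}}$.

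The main difficulty is that this bound features $\|\nabla\theta\|_{L^\infty}$ and $\|\nabla u\|_{L^\infty}$, neither of which is yet under control in $L^2_T$ (recall $H^2(\mathbb{T}^2)\not\hookrightarrow Lip$). I would handle $\|\nabla\theta\|_{L^\infty}$ by the interpolation $\|\nabla\theta\|_{L^\infty}\le C\|\theta\|_{H^2}^{\beta}\|\theta\|_{H^{2+s_0}}^{1-\beta}$ with a suitable exponent $\beta\in(0,1)$ (valid since $2+s_0>2+d/2=3$ in $d=2$ once $s_0>1$), so that after Young's inequality the $\|\theta\|_{H^{2+s_0}}^2$ part is absorbed by the dissipation, leaving a factor $\|\theta\|_{H^2}^{2\beta/\beta}$ times $\|\theta\|_{H^{s_0}}^2$ which is integrable in time by Lemma \ref{lem-unif-u-1}. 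For the $\|\nabla u\|_{L^\infty}$ factor I would use $\|\nabla u\|_{L^\infty}\le C\|u\|_{H^1}^{1/2}\|u\|_{H^3}^{1/2}$ or, more economically, $\|\nabla u\|_{L^\infty}\le C\|\nabla u\|_{L^2}^{1/2}\|\Delta u\|_{L^2}\cdots$; in any case one pays with $\|\Delta u\|_{L^2}^2$, which is in $L^1_T$ by \eqref{3-56-3}, times $\|\theta\|_{H^{s_0}}^2$. Note that $\|\nabla\theta\|_{H^{s_0-1}}\le\|\theta\|_{H^{s_0}}$ is free. The upshot is a differential inequality of the form
\begin{equation*}
\frac{d}{dt}\|\theta\|_{H^{s_0}}^2+\|\theta\|_{H^{2+s_0}}^2\le C\,g(t)\bigl(1+\|\theta\|_{H^{s_0}}^2\bigr),\qquad g\in L^1([0,T)),
\end{equation*}
with $g$ built from $\|\theta\|_{H^2}^2$, $\|\Delta u\|_{L^2}^2$ and constants depending on $\delta(\delta_0,T)$ and the energy bounds.

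Finally I would invoke Grönwall's inequality, using that $\theta_0\in H^s$ with $s\ge 4\ge s_0$ so $\|\theta_0\|_{H^{s_0}}<\infty$, to conclude both $\sup_{[0,T)}\|\theta\|_{H^{s_0}}^2$ and $\int_0^T\|\theta\|_{H^{2+s_0}}^2\,d\tau$ are bounded by a constant $C_T(u_0,\theta_0)$, which is \eqref{8-1}. The only subtle point to double-check is that every constant extracted from Lemma \ref{lem-4} applied to $\phi$ (and to $F''=\phi''$) genuinely depends only on the separation parameter $\delta$ and not on $\theta$ touching $\pm1$ — this is exactly what Theorem \ref{thm-Uni-sep} guarantees — and that the interpolation exponents are admissible in $d=2$ for the stated range $s_0\in(1,3/2]$, which holds since then $s_0<2+s_0$ and $s_0>d/2=1$. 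I expect the bookkeeping of which negative power of the dissipation is absorbed versus which positive power feeds the Grönwall factor to be the one place where care is needed, but no new idea beyond commutator estimates, Lemma \ref{lem-4}, interpolation, and the already-established energy bounds is required.
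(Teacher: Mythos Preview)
Your overall strategy (separation via Theorem~\ref{thm-Uni-sep}, then an $H^{s_0}$ energy estimate on the $\theta$-equation, close by Gr\"onwall using only the basic bounds from Lemma~\ref{lem-unif-u-1}) is exactly the paper's. The treatment of the $\phi$-term is also the same. The real divergence is in the convection term, and there your proposal contains a genuine gap.

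Your commutator route produces the factor $\|\nabla u\|_{L^\infty}$, and you then assert control of it by interpolation, writing in particular ``$\|\nabla u\|_{L^\infty}\le C\|\nabla u\|_{L^2}^{1/2}\|\Delta u\|_{L^2}\cdots$'' and concluding that ``one pays with $\|\Delta u\|_{L^2}^2$''. This is false in two dimensions: $H^1(\mathbb{T}^2)\not\hookrightarrow L^\infty(\mathbb{T}^2)$ is the borderline case, so there is no inequality of the form $\|\nabla u\|_{L^\infty}\lesssim \|\nabla u\|_{L^2}^\alpha\|\Delta u\|_{L^2}^{1-\alpha}$. Any bound on $\|\nabla u\|_{L^\infty}$ requires at least $u\in H^{2+\varepsilon}$, which is \emph{not} delivered by the basic energy estimate \eqref{3-56-3} (you only have $u\in L^\infty_T H^1\cap L^2_T H^2$). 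Indeed, the unavailability of $\|\nabla u\|_{L^\infty}$ at this stage is precisely the obstacle that later forces the logarithmic Sobolev inequality in the proof of Theorem~\ref{thm-glo-wp}; you cannot invoke it here without circularity.

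The paper sidesteps this entirely by a simpler handling of the convection term: write $u\cdot\nabla\theta=\nabla\cdot(u\theta)$ (using $\dive u=0$), integrate by parts, and apply the Moser product estimate to $u\theta$ rather than a commutator estimate. This yields
\[
\Bigl|\int_{\mathbb{T}^2}\Lambda^{s_0}(u\theta)\cdot\nabla\Lambda^{s_0}\theta\,dx\Bigr|
\le C\bigl(\|u\|_{H^{s_0}}\|\theta\|_{L^\infty}+\|u\|_{L^\infty}\|\theta\|_{H^{s_0}}\bigr)\|\nabla\theta\|_{H^{s_0}},
\]
so only $\|u\|_{L^\infty}$ and $\|u\|_{H^{s_0}}$ appear. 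The first is handled by $\|u\|_{L^\infty}^2\le C\|u\|_{L^2}\|u\|_{H^2}\in L^1_T$, and the second by the interpolation $\|u\|_{H^{s_0}}^2\le \|u\|_{H^{2(s_0-1)}}\|u\|_{H^2}$, which for $s_0\le 3/2$ uses only $L^\infty_T H^1\cap L^2_T H^2$. This is where the restriction $s_0\in(1,3/2]$ is actually used. Replacing your commutator step by this device, the rest of your argument goes through as written.
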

\begin{proof}
Firstly, similar to the proof of \eqref{3-6}, we may deduce, from the basic energy estimate \eqref{3-56-3}, that
\begin{equation}\label{8-1-1}
\begin{split}
\sup_{\tau \in [0, T]}\|\theta(\tau)\|^2_{\mathbb{D}_{0}}+\int^t_0\|\partial_t\theta(\tau)\|^2_{H^1(\mathbb{T}^2)}\, d\tau\leq C_T,
\end{split}
\end{equation}
which follows from Theorem \ref{thm-Uni-sep} that
\begin{equation}\label{8-2}
\|\theta\|_{L^{\infty}([0, T) \times \mathbb{T}^2)} < 1-\delta_T
\end{equation}
for some $\delta_T>0$.

Hence, from \eqref{3-56-3} and \eqref{8-2}, we get
\begin{equation}\label{global-4-2}
\begin{split}
\|\nabla\Delta\theta\|_{L^2([0, T]; L^2)} &\leq \|\nabla\phi(\theta)\|_{L^2([0, T]; L^2)}+\|\nabla\mu\|_{L^2([0, T]; L^2)}\\
&\leq C\|\nabla\theta\|_{L^2([0, T]; L^2)}+\|\nabla\mu\|_{L^2([0, T]; L^2)}<C_T.
\end{split}
\end{equation}
Taking the $H^{s_1}(\mathbb{T}^2)$ (with $s_1>1$) inner product of the second equation of \eqref{NSCH-2} with $\theta$, we get
\begin{equation}\label{global-3-87}
\begin{split}
&\frac{1}{2}\frac{d}{dt}\|\theta\|^2_{H^{s_1}}+\|\Delta\theta\|^2_{H^{s_1}}=\int_{\mathbb{T}^2}   \Lambda^{s_1}(u\, \theta)\cdot\nabla\Lambda^{s_1}\theta\, dx+\int_{\mathbb{T}^2}  \Lambda^{s_1}\phi(\theta)\, \Lambda^{s_1}\Delta\theta\, dx.
\end{split}
\end{equation}
From \eqref{8-2} and Lemma \ref{lem-4}, it follows
\begin{equation}\label{global-3-87-1}
\begin{split}
&|\int_{\mathbb{T}^2}  \Lambda^{s_1}\phi(\theta)\,\Lambda^{s_1}\Delta\theta\, dx|\leq  \frac{1}{4}\|\Delta \theta\|_{H^{{s_1}}}^2+ C\|\phi(\theta)\|_{H^{s_1}}^2\leq  \frac{1}{4}\|\Delta\theta\|_{H^{{s_1}}}^2+ C \|\theta\|_{H^{s_1}}^2,
\end{split}
\end{equation}
and
\begin{equation}\label{global-3-87-2}
\begin{split}
&|\int_{\mathbb{T}^2}   \Lambda^{s_1}(u\, \theta)\cdot\nabla\Lambda^{s_1}\theta\, dx|\leq C(\|u\|_{H^{s_1}}\|\theta\|_{L^{\infty}}+\|u\|_{L^{\infty}}\|\theta\|_{H^{s_1}})\|\nabla\theta\|_{H^{s_1}}\\
&\leq  \frac{1}{4}\|\Delta \theta\|_{H^{{s_1}}}^2+ C\|\theta\|_{H^{s_1}}^2\|u\|_{L^{\infty}}^2+\|u\|_{H^{s_1}}^2.
\end{split}
\end{equation}
Plugging \eqref{global-3-87-1} and \eqref{global-3-87-2} into \eqref{global-3-87} yields
\begin{equation}\label{global-3-87-3}
\begin{split}
&\frac{d}{dt}\|\theta\|_{H^{{s_1}}}^2+\|\Delta\theta\|_{H^{{s_1}}}^2 \leq C(\|u\|_{H^{{s_1}}}^2+\|\theta\|_{H^{{s_1}}}^2)(1+\|u \|_{L^{\infty}}^2).
\end{split}
\end{equation}
Taking ${s_1}=s_0 \in (1, \frac{3}{2}]$ in \eqref{global-3-87-3}, it follows from Gronwall's inequality that
\begin{equation}\label{global-3-87-4}
\begin{split}
&\sup_{\tau\in [0, t]}\|\theta(\tau)\|_{H^{s_0}}^2+\int_0^t\|\Delta\theta\|_{H^{s_0}}^2\, d\tau\\
&\leq C(\|\theta_0\|_{H^{s_0}}^2+\int_0^t\|u\|_{H^{s_0}}^2(1+\|u \|_{L^{\infty}}^2)\, d\tau) e^{C\int_0^t(1+\|u \|_{L^{\infty}}^2)\, d\tau}\\
&\leq C(t)\bigg(\|\theta_0\|_{H^{s_0}}^2+\int_0^t\|u\|_{H^{2(s_0-1)}}\|u\|_{H^{2}}(1+\| u\|_{L^{2}}\|u\|_{H^2})\, d\tau\bigg)\\
&\qquad\qquad\qquad\qquad\qquad\qquad\qquad\qquad\qquad\times e^{C\int_0^t(1+\| u\|_{L^{2}}\|u\|_{H^2})\, d\tau}.
\end{split}
\end{equation}
Therefore, combining \eqref{global-3-87-4} with \eqref{3-56-3} and \eqref{global-4-2}, we deduce \eqref{8-1}.
\end{proof}

We are now in a position to complete the proof of Theorem \ref{thm-glo-wp}.
\begin{proof}[Proof of Theorem \ref{thm-glo-wp}]

Thanks to Theorem \ref{thm-lp-nsch}, we conclude that: under the assumptions in Theorem \ref{thm-glo-wp}, \eqref{NSCH-2} has a
unique local solution $(u, \, \theta)$ satisfying \eqref{local-space} and \eqref{separate-cond-local-1}.
Assume that ${T}^\ast>0$ is the maximal existence time of this solution, that is
\begin{equation*}\label{local-space-1}
\begin{split}
(u, \theta)\in &\bigg(C([0, T^\ast); H^s(\mathbb{T}^2))\cap (L^2_{loc}([0,T^\ast); H^{s+1}(\mathbb{T}^2)) \bigg) \\
&\qquad \times \bigg(C([0,T^\ast); H^s(\mathbb{T}^2))\cap L^2_{loc}([0,T^\ast); H^{s+2}(\mathbb{T}^2)))\bigg).
\end{split}
\end{equation*}
It suffices to prove ${T}^\ast=+\infty$. We will argue by contradiction argument. Hence, we assume ${T}^\ast<+\infty$ in what follows.

 Since the system \eqref{NSCH-2} has a smoothing effect to the solution $(u, \theta)$, we may assume, without loss of generality, the regularity index $s>4$ of the initial data in Theorem \ref{thm-lp-nsch} according to \eqref{local-space}.

Thanks to Theorem \ref{thm-Uni-M}, we find that, for every $t\in[0,T^\ast)$, there exists a positive constant $\delta_1 \in (0, 1)$ such that
\begin{equation}\label{7-2}
\|\theta\|_{L^\infty([0, T^\ast)\times \mathbb{T}^2)}\leq  1-\delta_1.
\end{equation}
Taking the $H^s(\mathbb{T}^2)$ inner product of the first equation of \eqref{NSCH-2} with $u$, we have
\begin{equation}\label{7-3}
\begin{split}
\frac{1}{2}\frac{d}{dt} \|u\|_{H^s}^2 &+\int_{\mathbb{T}^2}   2\nu(\theta) \Lambda^s D(u) : \Lambda^s \nabla u \,dx=\int_{\mathbb{T}^2} ([\Lambda^s, u]\cdot \nabla u):\Lambda^s \nabla u \,dx  \\
&-\int_{\mathbb{T}^2}  [\Lambda^s, 2\nu(\theta)] D(u): \Lambda^s \nabla u \,dx+\int_{\mathbb{T}^2}  \Lambda^s (\mu\nabla\theta)\cdot \Lambda^s u \,dx=:\sum_{i=1}^3 I_i.
\end{split}
\end{equation}
We first check from Lemmas \ref{lemma-comm} and \ref{lem-4} that
\begin{equation}\label{7-4}
\begin{split}
|I_1|&=|\int_{\mathbb{T}^2} ([\Lambda^s, u]\cdot \nabla u):\Lambda^s \nabla u \,dx |\leq C\|\nabla u\|_{L^{\infty}}\|u\|_{H^s}\|\nabla u\|_{H^s}\\
&\leq \eta\|\nabla u\|_{H^s}^2+C_{\eta}\|\nabla u\|_{L^{\infty}}^2\|u\|_{H^s}^2,
\end{split}
\end{equation}
and
\begin{equation}\label{7-5}
\begin{split}
|I_2|&=|\int_{\mathbb{T}^2}  [\Lambda^s, 2\nu(\theta)] D(u): \Lambda^s \nabla u \,dx|
\\
&\leq C \|\nabla u\|_{H^s}(\|\nu(\theta)-\nu(0)\|_{H^s}\|\nabla u\|_{L^{\infty}}+\|\nabla(\nu(\theta)-\nu(0))\|_{L^{\infty}}\|\nabla u\|_{H^{s-1}})\\
&\leq C \|\nabla u\|_{H^s}(\|\theta\|_{H^s}\|\nabla u\|_{L^{\infty}}+\|\nabla\theta\|_{L^{\infty}}\|u\|_{H^{s}})\\
&\leq \eta\|\nabla u\|_{H^s}^2+C_{\eta}(\|\nabla u\|_{L^{\infty}}^2+\|\nabla\theta\|_{L^{\infty}}^2)(\|u\|_{H^s}^2+\|\theta\|_{H^s}^2)
\end{split}
\end{equation}
for any positive constant $\eta$. While for $I_3$, we deduce from \eqref{7-2} and Lemma \ref{lem-4} that
and
\begin{equation*}
\begin{split}
|I_3|&=|\int_{\mathbb{T}^2}  \Lambda^s (\mu\nabla\theta)\cdot \Lambda^s u \,dx|
\\
&\leq C \|u\|_{H^s}\bigg((\|\phi(\theta)\|_{H^s}+\|\Delta\theta\|_{H^s})\|\nabla \theta\|_{L^{\infty}}+(\|\phi(\theta)\|_{L^{\infty}}+\|\Delta\theta\|_{L^{\infty}})\|\nabla \theta\|_{H^s}\bigg)\\
&\leq C \|u\|_{H^s}\bigg((\|\theta\|_{H^s}+\|\Delta\theta\|_{H^s})\|\nabla \theta\|_{L^{\infty}}+(1+\|\Delta\theta\|_{L^{\infty}})\|\nabla \theta\|_{H^s}\bigg),
\end{split}
\end{equation*}
which follows that
\begin{equation}\label{7-6}
\begin{split}
&|I_3|
\leq \eta \|\Delta\theta\|_{H^s}^2+C_{\eta}(1+\|\nabla\theta\|_{L^{\infty}}^2+\|\Delta\theta\|_{L^{\infty}}^2)(\|u\|_{H^s}^2+\|\theta\|_{H^s}^2).
\end{split}
\end{equation}
On the other hand, we get, for some positive $c$,
\begin{equation}\label{7-5-1}
\begin{split}
&\int_{\mathbb{T}^2}   2\nu(\theta) \Lambda^s D(u) : \Lambda^s \nabla u \,dx =\int_{\mathbb{T}^2}  2\nu(\theta) |\Lambda^s D(u)|^2\, dx \geq c\nu_1\|\nabla u\|_{H^s}^2,
\end{split}
\end{equation}
where $\nu_1>0$ is a lower bound of $\nu(\cdot)$ on $[-1, 1]$.

Therefore, substituting \eqref{7-4}-\eqref{7-5-1} into \eqref{7-3} we deduce that
\begin{equation}\label{7-7}
\begin{split}
&\frac{d}{dt} \|u\|_{H^s}^2+\frac{3}{2} c\nu_1\|\nabla u\|_{H^{s}}^2\\
&\leq \eta \|\Delta\theta\|_{H^s}^2+ C_{\eta}(1+\|\Delta\theta\|_{L^{\infty}}^2+\|\nabla u\|_{L^{\infty}}^2+\|\nabla\theta\|_{L^{\infty}}^2)(\|u\|_{H^s}^2+\|\theta\|_{H^s}^2).
\end{split}
\end{equation}
Taking $\eta$ in \eqref{7-7} small enough and combining \eqref{global-3-87-3} with \eqref{7-7}, we infer
\begin{equation}\label{7-11}
\begin{split}
&\frac{d}{dt} (\|u\|_{H^s}^2+\|\theta\|_{H^s}^2)+ c\nu_1\|\nabla u\|_{H^{s}}^2+\|\Delta\theta\|_{H^s}^2\\
&\leq C(\|u\|_{H^s}^2+\|\theta\|_{H^s}^2)(1+\|\Delta\theta\|_{L^{\infty}}^2+\| u\|_{L^{\infty}}^2+\|\nabla\theta\|_{L^{\infty}}^2+\|\nabla u\|_{L^{\infty}}^2),
\end{split}
\end{equation}
which follows that
\begin{equation}\label{7-12-1}
\begin{split}
&\sup_{\tau\in [0, t]}(\|u(\tau)\|_{H^s}^2+\|\theta(\tau)\|_{H^s}^2)\\
&\leq (\|u_0\|_{H^s}^2+\|\theta_0\|_{H^s}^2)e^{C\int_0^t(1+\|(u, \,\nabla\theta, \,\Delta\theta)\|_{L^\infty}^2+\|\nabla u\|_{L^{\infty}}^2)\, d\tau}.
\end{split}
\end{equation}
Thanks to \eqref{3-56-3}, \eqref{8-1}, and the Sobolev embedding $H^s(\mathbb{T}^2) \hookrightarrow L^{\infty}(\mathbb{T}^2) $ with $s>1$, we infer from \eqref{7-12-1} that
\begin{equation}\label{7-12}
\begin{split}
&\sup_{\tau\in [0, t]}(\|u(\tau)\|_{H^s}^2+\|\theta(\tau)\|_{H^s}^2)\leq C(\|u_0\|_{H^s}, \|\theta_0\|_{H^s}, t)\exp\{C\int_0^t\|\nabla u\|_{L^\infty}^2\, d\tau\}.
\end{split}
\end{equation}
Applying Lemma \ref{lem-logarithmic} to \eqref{7-12} yields
\begin{equation*}\label{7-13}
\begin{split}
&(e+\|u(t)\|_{H^s}^2+\|\theta(t)\|_{H^s}^2)\leq C(t)\exp\{C\int_0^t(1+\|u\|_{H^2}^2)\log(e+\|u\|_{H^s}^2)\, d\tau\},
\end{split}
\end{equation*}
where we have used the fact $s>2$, which leads to
\begin{equation*}\label{7-14}
\begin{split}
&\log (e+\|u(t)\|_{H^s}^2+\|\theta(t)\|_{H^s}^2)\leq C(t)+ C \int_0^t(1+\|u\|_{H^2}^2)\log(e+\|u\|_{H^s}^2)\, d\tau.
\end{split}
\end{equation*}
Therefore, we get from Gronwall's inequality and \eqref{3-56-3} that
\begin{equation*}\label{7-15}
\begin{split}
&\log (e+\|u(t)\|_{H^s}^2+\|\theta(t)\|_{H^s}^2)\\
&\leq (\log (e+\|u_0\|_{H^s}^2+\|\theta_0\|_{H^s}^2)+C(t))\exp\{ C\int_0^t(1+\|u\|_{H^2}^2)\, d\tau\} \leq C(t),
\end{split}
\end{equation*}
which follows that
\begin{equation*}
\begin{split}
& \sup_{\tau\in [0, T^\ast)}(\|u(\tau)\|_{H^s}^2+\|\theta(\tau)\|_{H^s}^2) \leq C_{T^\ast} <+\infty.
\end{split}
\end{equation*}
From this, the solution can be extended after $t=T^\ast$, which contradicts with the definition of $T^\ast$. Hence, we get $T^\ast=+\infty$, and then complete the proof of Theorem \ref{thm-glo-wp}.
\end{proof}

\vskip 0.2cm

\noindent {\bf Acknowledgments.} The work of Guilong Gui is supported in part by NSF of China under Grant 11571279 and 11331005. Zhenbang Li is supported in part by NSF of China under Grant 11801443 and SXDE Fund 15JK1347.

\vskip 0.2cm

\renewcommand{\theequation}{\thesection.\arabic{equation}}
\setcounter{equation}{0}

\appendix

\section{Appendix}

The proof  of Theorem \ref{thm-lp-nsch} requires a dyadic decomposition
of the Fourier variables, or Littlewood-Paley decomposition, which may be explained how it may be built in the case $x\in\mathbb{R}^d$ or $\mathbb{T}^d$ (see e.g. \cite{BCD, Ch1, Tri}) as follows..

Let us first recall a dyadic partition of  unity. We define by  $\mathcal{C}$ the ring of center~$0$, of small radius~$3/4$
and  great radius~$8/3$. Then it exists two radial functions~$\chi$ and~$\varphi$ the  values of
which are in the
interval~$[0,1]$, belonging respectively  to~$\cD(B(0,4/3))$ and
to~$\mathcal{D}(\mathcal{C})$ such that
\begin{equation*}
\label{inho-LP}
\chi(\xi)  + \sum_{j\geq 0} \varphi (2^{-j}\xi) =  1 \quad (\forall \xi\in\mathbb{R}^d), \quad \sum_{j\in \mathbb{Z}} \varphi (2^{-j}\xi) =  1 \quad (\forall  \xi\in \mathbb{R}^d\setminus\{0\}),
\end{equation*}
\begin{equation*}
\label{lpfond2}
|j-j'|\geq  2
\Rightarrow
\Supp \varphi(2^{-j}\cdot)\cap \Supp  \varphi(2^{-j'}\cdot)=\emptyset,
\end{equation*}
\begin{equation*}
\label{lpfond3}
j\geq  1
\Rightarrow
\Supp \chi\cap \Supp \varphi(2^{-j}\cdot) =  \emptyset.
\end{equation*}
If $\widetilde{\mathcal{C}}= B(0,2/3)+{\mathcal{C}}$, then $\widetilde{\mathcal{C}}$
is a ring and we have
$
|j-j'|\geq  5\Rightarrow
2^{j'}{\widetilde{\mathcal{C}}}
\cap 2^j {\mathcal C} =  \emptyset,
$ and
\begin{equation*}
\label{lpfond5}
\frac1 3 \leq \chi^2(\xi)
+ \sum_{j\geq 0} \varphi^2(2^{-j}\xi)  \leq 1 \quad (\forall  \xi\in \mathbb{R}^d),
\quad
\frac 1 2\leq \sum_{j\in \mathbb{Z}} \varphi^2(2^{-j}\xi)  \leq 1 \quad (\forall  \xi\in \mathbb{R}^d\setminus\{0\}).
\end{equation*}
From now on, we fix two functions $\chi$ and $\varphi$ satisfying
the above assertions and
denote $h\eqdefa{\mathcal F}^{-1}\varphi,$
$\widetilde{h}\eqdefa{\mathcal F}^{-1}\chi$.
The inhomogeneous dyadic blocks ${\Delta}_j$ and the inhomogeneous
low-frequency cut-off operator ${S}_j$ are defined for all $ j \in
\mathbb{N}\cup \{-1\}$ by
\begin{equation*}\label{cam-2.01-a}
\begin{split}
&{\Delta}_{j}
f\eqdefa\varphi(2^{-j}D)f=2^{j d}\int_{\mathbb{R}^{d}}h(2^j y)f(x-y)dy, \quad \forall \quad j \geq 0,\\
&{\Delta}_{-1}
f\eqdefa\chi(D)f=\int_{\mathbb{R}^{d}}\widetilde{h}( y)f(x-y)dy,\quad {\Delta}_{j}
f\eqdefa 0\quad (\forall \, j \leq -2),\\
&{S}_{j} f\eqdefa\sum_{ j^{\prime} \le j-1}{\Delta}_{j^{\prime}}
f=\chi(2^{-j}D)f=2^{j d}\int_{\mathbb{R}^{d}}\widetilde{h}(2^j
y)f(x-y)dy.
\end{split}
\end{equation*}

We should point out that all the above operators~$\Delta_j$ and~$S_{j}$
maps $L^p$ into $L^p$ with norms  which do not depend on~$j$. This
fact will be used all along this paper.

With above notations in hand, the inhomogenous Sobolev
space $H^s(\mathbb{R}^{d})$ can be equivalently defined by
\begin{equation*}
H^s(\mathbb{R}^{d})\eqdefa\{f\in {\mathcal
S}^{\prime}(\mathbb{R}^{d})\,: \,  \|f\|_{H^s}<\infty\}\quad \mbox{with}\quad
\|f\|_{H^s}\eqdefa \bigg\|\bigg(2^{j s }\|\Delta_j
f\|_{L^2}\bigg)_{j \in \mathbb{N} \cup \{-1\}} \bigg\|_{\ell^{2}}
\end{equation*}

\begin{rmk}\label{rmk-cha-1}

Let $s\in \mathbb{R}$, then, $u$ belongs to ${B}^{s}_{p, r}$ if and only if there exists
$\{c_{j}\}_{j \in \mathbb{N} \cup \{-1\}} $ such that $\|c_{j}\|_{\ell^{2}} =1$
and
\begin{equation*}
\|{\Delta}_{j}u\|_{L^{2}}\leq C c_{j} 2^{-j s } \|u\|_{H^s}.
\end{equation*}
\end{rmk}
\begin{lem}\label{cpam-lem2.1}(\cite{BCD}, Bernstein-type lemma) Let $\mathcal{B}$ be a ball and $\mathcal{C}$ a ring of $\mathbb{R}^d.$
 A constant $C$ exists so
that for any positive real number $\lambda$, any non negative
integer $k$, any  homogeneous function $\sigma$ of degree $m$ smooth
outside of $0$, and any couple of real numbers $(a, \; b)$ with $ b
\geq a \geq 1,$ there hold
\begin{equation*}
\begin{split}
&\Supp \hat{u} \subset \lambda \mathcal{B} \Rightarrow
\sup_{|\alpha|=k} \|\pa^{\alpha} u\|_{L^{b}} \leq  C^{k+1}
\lambda^{k+ d(\frac{1}{a}-\frac{1}{b} )}\|u\|_{L^{a}},\\
& \Supp \hat{u} \subset \lambda \mathcal{C} \Rightarrow
C^{-1-k}\lambda^{ k}\|u\|_{L^{a}}\leq
\sup_{|\alpha|=k}\|\partial^{\alpha} u\|_{L^{a}} \leq
C^{1+k}\lambda^{ k}\|u\|_{L^{a}},\\
& \Supp \hat{u} \subset \lambda \mathcal{C} \Rightarrow \|\sigma(D)
u\|_{L^{b}}\leq C_{\sigma, m} \lambda^{ m+d(\frac{1}{a}-\frac{1}{b}
)}\|u\|_{L^{a}}.
\end{split}
\end{equation*}
\end{lem}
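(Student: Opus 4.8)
The final statement to prove is Lemma~\ref{cpam-lem2.1} (the Bernstein-type lemma), so the plan below targets that result.

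\medskip

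\textbf{Overall strategy.} The plan is to reduce everything to the decay properties of the inverse Fourier transform of a smooth compactly supported multiplier, and then to invoke Young's convolution inequality. The unifying observation is that if $\Supp\widehat u\subset\lambda\mathcal B$, pick a fixed $\phi\in\mathcal D(\mathbb R^d)$ with $\phi\equiv 1$ on $\mathcal B$; then $\widehat u=\phi(\lambda^{-1}\cdot)\widehat u$, so $\pa^\alpha u = (g_{\lambda,\alpha})\ast u$ where $g_{\lambda,\alpha}:=\mathcal F^{-1}\bigl((i\xi)^\alpha\phi(\lambda^{-1}\xi)\bigr)$. A change of variables gives $g_{\lambda,\alpha}(x)=\lambda^{k+d}g_{1,\alpha}(\lambda x)$ with $k=|\alpha|$, hence $\|g_{\lambda,\alpha}\|_{L^r}=\lambda^{k+d(1-1/r)}\|g_{1,\alpha}\|_{L^r}$ for every $r\in[1,\infty]$. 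Since $g_{1,\alpha}\in\mathcal S(\mathbb R^d)$, all these $L^r$ norms are finite and bounded by a constant depending only on $\phi$ (which we may fix once and for all), with the exponential-in-$k$ growth $C^{k+1}$ coming from the standard estimate on derivatives of a fixed bump function.

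\medskip

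\textbf{Step-by-step.} First, I would prove the first inequality (low-frequency / ball case): with $1/r = 1 + 1/b - 1/a \in[1,\infty]$ (valid since $b\ge a$), Young's inequality gives $\|\pa^\alpha u\|_{L^b}\le\|g_{\lambda,\alpha}\|_{L^r}\|u\|_{L^a}\le C^{k+1}\lambda^{k+d(1/a-1/b)}\|u\|_{L^a}$, taking the supremum over $|\alpha|=k$. Second, for the ring case I would choose instead $\psi\in\mathcal D(\mathbb R^d\setminus\{0\})$ with $\psi\equiv 1$ on $\mathcal C$, so that $\widehat u=\psi(\lambda^{-1}\cdot)\widehat u$, and the same convolution/scaling argument yields the upper bound $\sup_{|\alpha|=k}\|\pa^\alpha u\|_{L^a}\le C^{1+k}\lambda^k\|u\|_{L^a}$. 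For the matching lower bound on the ring, the point is that on $\mathcal C$ the symbol $|\xi|^{-k}$ (or more precisely a smooth homogeneous-degree-$(-k)$ symbol $m(\xi)$ times $|\xi|^k$ that inverts $\sum_{|\alpha|=k}c_\alpha(i\xi)^\alpha$ for a suitable constant-coefficient operator) is bounded; one writes $u = \sigma_k(D)\bigl(\sum_{|\alpha|=k}\pa^\alpha u\cdot(\text{constants})\bigr)$ via a fixed multiplier supported on a neighborhood of $\mathcal C$, and applies the convolution estimate to get $\lambda^k\|u\|_{L^a}\le C^{1+k}\sup_{|\alpha|=k}\|\pa^\alpha u\|_{L^a}$. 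Third, the homogeneous-symbol statement: for $\sigma$ smooth outside $0$, homogeneous of degree $m$, and $\Supp\widehat u\subset\lambda\mathcal C$, write $\sigma(D)u=\bigl(\mathcal F^{-1}(\sigma\,\psi(\lambda^{-1}\cdot))\bigr)\ast u$; homogeneity gives $\sigma(\xi)\psi(\lambda^{-1}\xi)=\lambda^m\,\sigma(\lambda^{-1}\xi)\psi(\lambda^{-1}\xi)$ with $\sigma\psi$ a fixed Schwartz-class function, so scaling produces the factor $\lambda^{m}\lambda^{d(1/a-1/b)}$ and Young's inequality finishes it, the constant $C_{\sigma,m}$ depending on finitely many seminorms of $\sigma$ on the ring.

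\medskip

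\textbf{Main obstacle.} The routine parts are the scaling computation and Young's inequality; the only genuinely delicate points are (i) tracking the exponential constant $C^{k+1}$ uniformly in $k$, which requires noting that for a fixed bump $\phi$ one has $\|\pa^\beta\phi\|_{L^\infty}\le C^{|\beta|+1}$ and controlling $\|(i\xi)^\alpha\phi(\xi)\|$ and its derivatives by Leibniz, and (ii) the lower bound on the ring, where one must construct the inverting multiplier carefully so that it is smooth and compactly supported away from the origin (this is where ``$\mathcal C$ a ring'' rather than a ball is essential, since $|\xi|^{-k}$ must be a legitimate Fourier multiplier on the support). Everything else follows the standard template and I would present it compactly, referring to \cite{BCD} for the details that are entirely classical.
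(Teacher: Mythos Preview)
Your proposal is correct and follows the classical argument. Note, however, that the paper does not actually prove this lemma: it is simply stated with a citation to \cite{BCD}, where the proof you outline (cutoff multiplier, scaling of the convolution kernel, Young's inequality, and inversion via a smooth multiplier supported on the ring) is precisely the one given.
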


In  order to obtain a better description of the regularizing effect
of the transport-diffusion equation, we will use Chemin-Lerner type
spaces $\widetilde{L}^{\lambda}_T(B^s_{p,r}(\mathbb{R}^d))$ from
\cite{Ch99, CL}. Let $s\in\mathbb{R}$,
$\lambda \in[1,\,+\infty]$ and $T\in(0,\,+\infty]$. We define
$\widetilde{L}^{\lambda}_T(H^s(\mathbb{R}^d))$ as the completion
of $C([0,T],\mathcal{S}(\mathbb{R}^d))$ by the norm
$
\| f\|_{\widetilde{L}^{\lambda}_T(H^s)} \eqdefa
\Big(\sum_{q\in\Z}2^{2qs} \|\Delta_q\,f
\|_{L^{\lambda}([0, T]; L^2)}^2 \Big)^{\frac{1}{2}}
<\infty.
$
Thanks to this definition, Minkowskii's inequality ensures that
\begin{equation*}\label{C-L-space-1}
\begin{split}
&\| f\|_{L^{\lambda}_T(H^s)} \leq \| f\|_{\widetilde{L}^{\lambda}_T(H^s)} \quad \mbox{if} \quad \lambda > 2,\\
&\| f\|_{\widetilde{L}^{\lambda}_T(H^s)} \leq \| f\|_{L^{\lambda}_T(H^s)} \quad \mbox{if} \quad \lambda < 2,\\
&\| f\|_{L^{\lambda}_T(H^s)} =\| f\|_{\widetilde{L}^{\lambda}_T(H^s)} \quad \mbox{if} \quad \lambda = 2.
\end{split}
\end{equation*}

In what follows, we shall frequently use  Bony's decomposition
\cite{Bo} in the inhomogeneous context:
\begin{equation}\label{bony}
\begin{split}
&uv={T}_u v+{R}(u,v)={T}_u v+{T}_v u+\mathcal{R}(u,v)
\end{split}
\end{equation}
where
\begin{equation*}
\begin{split}
&{T}_u v\eqdefa\sum_{q \in \mathbb{Z}}S_{q-1}u\Delta_q
v,\qquad
{R}(u,v)\eqdefa\sum_{q\in\mathbb{Z}}\Delta_q u  S_{q+2}v,\\
&\mathcal{R}(u,v)\eqdefa\sum_{q\in \mathbb{Z}}\Delta_q u
\widetilde{\Delta}_{q}v\quad \quad \mbox{and}\quad
\widetilde{\Delta}_{q}v\eqdefa \sum_{|q'-q|\leq
1}\Delta_{q'}v.
\end{split}
\end{equation*}

For the sake of completeness, we shall first recall the following
commutator's estimates which will be frequently used throughout the
succeeding sections.

The following basic lemma will be of constant use in this
paper.
\begin{lem}[Lemma 2.97 in \cite{BCD}]\label{lem-commutator}(Commutator estimates) Let $(p, q, r) \in [1, \infty]^3$, $\theta$ be a $C^1$ function on $\mathbb{R}^{d}$ such that $(1+|\cdot|) \hat{\theta} \in L^1$. There
exists a constant $C$ such that for any Lipschitz function a with gradient in $L^p$
and any function $b$ in $L^q$, we have, for any positive $\lambda$,
\begin{equation}\label{commutator-compact-0}
\|[\theta(\lambda^{-1} D), a]b\|_{L^r} \leq C \lambda^{-1}\|\grad a\|_{L^p}\|b\|_{L^q} \quad \mbox{with} \quad \frac{1}{p}+ \frac{1}{q}= \frac{1}{r}.
\end{equation}
\end{lem}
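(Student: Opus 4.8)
\textbf{Proof proposal for Lemma \ref{lem-commutator} (the commutator estimate).}

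The plan is to follow the now-classical argument of Bahouri--Chemin--Danchin, writing the commutator as an integral of increments of $a$ against a rescaled kernel. First I would reduce to $\lambda=1$ by scaling: replacing $a(x),b(x)$ by $a(\lambda^{-1}x),b(\lambda^{-1}x)$ transforms $[\theta(\lambda^{-1}D),a]b$ into $[\theta(D),a_\lambda]b_\lambda$ evaluated at $\lambda x$, and tracking the Jacobian factors from the $L^r$, $L^p$, $L^q$ norms produces exactly the $\lambda^{-1}$ gain claimed in \eqref{commutator-compact-0}. So it suffices to prove $\|[\theta(D),a]b\|_{L^r}\le C\|\nabla a\|_{L^p}\|b\|_{L^q}$ with $\tfrac1p+\tfrac1q=\tfrac1r$.

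Next, letting $k:=\mathcal F^{-1}\theta$, the hypothesis $(1+|\cdot|)\hat\theta\in L^1$ says $k,\,|\cdot|\,k\in L^1(\mathbb R^d)$. Writing out $\theta(D)$ as convolution with $k$, one computes
\begin{equation*}
[\theta(D),a]b(x)=\int_{\mathbb R^d} k(y)\bigl(a(x-y)-a(x)\bigr)\,b(x-y)\,dy.
\end{equation*}
Since $a$ is Lipschitz with $\nabla a\in L^p$, the mean value theorem along the segment from $x-y$ to $x$ gives
\begin{equation*}
|a(x-y)-a(x)|\le |y|\int_0^1 |\nabla a(x-\tau y)|\,d\tau,
\end{equation*}
so that
\begin{equation*}
\bigl|[\theta(D),a]b(x)\bigr|\le \int_{\mathbb R^d}\int_0^1 |y|\,|k(y)|\,|\nabla a(x-\tau y)|\,|b(x-y)|\,d\tau\,dy.
\end{equation*}
The main step is then to take the $L^r$ norm in $x$: I would apply Minkowski's integral inequality to pull the $L^r_x$ norm inside the $dy\,d\tau$ integral, and for each fixed $(y,\tau)$ use Hölder's inequality with exponents $p,q$ (using $\tfrac1p+\tfrac1q=\tfrac1r$) together with translation invariance of Lebesgue measure to bound $\|\,|\nabla a(\cdot-\tau y)|\,|b(\cdot-y)|\,\|_{L^r}\le \|\nabla a\|_{L^p}\|b\|_{L^q}$. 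What remains is $\int |y|\,|k(y)|\,dy=\||\cdot|\,k\|_{L^1}<\infty$, which is finite precisely by the assumption on $\theta$, and this constant (times the $\int_0^1 d\tau=1$) is the $C$ in the statement.

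The only mildly delicate point — and the one I would treat with a little care rather than as routine — is the justification of Fubini/Minkowski when some of $p,q,r$ equal $\infty$, and the fact that $\nabla a\in L^p$ only controls the \emph{distributional} gradient: one should first argue for $a$ smooth (mollify), obtain the estimate with a constant independent of the mollification parameter, and pass to the limit using $\nabla a_\varepsilon\to\nabla a$ in $L^p_{loc}$ (or weak-$*$ when $p=\infty$) and lower semicontinuity of the $L^r$ norm. Aside from that bookkeeping, no obstacle is expected; the heart of the matter is the single convolution identity plus Minkowski and Hölder.
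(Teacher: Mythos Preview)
Your argument is correct and is precisely the standard proof from \cite{BCD}; note, however, that the paper does not give its own proof of this lemma---it is simply quoted as Lemma~2.97 of \cite{BCD}---so there is nothing further to compare against.
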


\begin{lem}\label{lem-pressure-1} \
Let $s >0$, $f \in  H^{s+2}(\mathbb{R}^2)$ and
$ \grad g\in H^{s}(\mathbb{R}^2)$. Then there holds
\begin{equation} \label{model-pre-unique-1}
\begin{split}
&\|[\Delta_{q}, f] \nabla g\|_{L^2}^2 \lesssim c_{q}2^{-q(s+1)}(\|f\|_{H^{s+2}}\|\nabla g\|_{L^2}+\|f\|_{H^{2}}\|\nabla g\|_{H^{s}}),
\end{split}
\end{equation}
where $\sum_{q \geq -1} c_q^2\leq 1$.
\end{lem}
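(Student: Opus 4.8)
The plan is to use Bony's decomposition \eqref{bony} to write $f\,\nabla g = T_f\nabla g + T_{\nabla g}f + \mathcal R(f,\nabla g)$, and then to analyze the commutator term by term. The key observation is that only the paraproduct $T_f\nabla g$ produces a genuine commutator (the other two pieces are estimated directly, without needing the commutator structure, because they are already "well-localized" in frequency relative to $\Delta_q$). Concretely, writing $[\Delta_q, f]\nabla g = [\Delta_q, T_f]\nabla g + \Delta_q\big(T_{\nabla g}f + \mathcal R(f,\nabla g)\big) - T_{\Delta_q f}\,\nabla g$, the middle and last pieces are handled by standard paraproduct/remainder bounds in $H^s$, while the first term is the one where we invoke the Bernstein-type commutator estimate of Lemma \ref{lem-commutator} (inequality \eqref{commutator-compact-0}), applied to the kernel $\theta = \varphi$.

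First I would treat the paraproduct commutator $[\Delta_q, T_f]\nabla g = \sum_{|q'-q|\le 4}[\Delta_q, S_{q'-1}f]\Delta_{q'}\nabla g$. Applying \eqref{commutator-compact-0} with $\lambda \sim 2^q$, $a = S_{q'-1}f$, $b = \Delta_{q'}\nabla g$, and H\"older's inequality $L^2 \hookleftarrow L^\infty \cdot L^2$ gives
\begin{equation*}
\|[\Delta_q, S_{q'-1}f]\Delta_{q'}\nabla g\|_{L^2} \lesssim 2^{-q}\|\nabla S_{q'-1}f\|_{L^\infty}\|\Delta_{q'}\nabla g\|_{L^2}.
\end{equation*}
Then $\|\nabla S_{q'-1}f\|_{L^\infty}\lesssim \|\nabla f\|_{L^\infty}\lesssim \|f\|_{H^2}$ by Bernstein (since $d=2$ and $H^2 \hookrightarrow W^{1,\infty}$), and summing $2^{q(s+1)}$ times this bound over $|q'-q|\le 4$ against $\|\Delta_{q'}\nabla g\|_{L^2}$ produces an $\ell^2$-summable sequence $c_q$ with the factor $\|f\|_{H^2}\|\nabla g\|_{H^s}$; this is exactly the second term on the right of \eqref{model-pre-unique-1}. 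The remainder term $\Delta_q \mathcal R(f,\nabla g) = \Delta_q \sum_{q'\gtrsim q}\Delta_{q'}f\,\widetilde\Delta_{q'}\nabla g$ is where one must be a bit careful in 2D: using $\|\Delta_{q'}f\|_{L^\infty}\lesssim 2^{q'}\|\Delta_{q'}f\|_{L^2}\lesssim 2^{-q'}c_{q'}\|f\|_{H^2}\cdot 2^{q'}$ (Bernstein plus Remark \ref{rmk-cha-1}) — more precisely $\|\Delta_{q'}f\|_{L^\infty}\lesssim 2^{q'}\|\Delta_{q'}f\|_{L^2}$ and $\|\Delta_{q'}f\|_{L^2}\lesssim 2^{-q'\cdot 2}c_{q'}\|f\|_{H^2}$, one bounds $\|\Delta_{q'}f\|_{L^\infty}\lesssim 2^{-q'}c_{q'}\|f\|_{H^2}$ — and pairing with $\|\widetilde\Delta_{q'}\nabla g\|_{L^2}$, the gain $2^{-q'}$ together with $q'\ge q-N$ lets the $2^{q(s+1)}$ weight be absorbed; this again contributes to the $\|f\|_{H^2}\|\nabla g\|_{H^s}$ piece. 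Finally the term $T_{\nabla g}f - T_{\Delta_q f}\nabla g$ (low-high with the roles exchanged) is estimated placing $f$ in $H^{s+2}$ and $\nabla g$ in $L^2$ at low frequency: $\|S_{q'-1}\nabla g\|_{L^\infty}\lesssim \|\nabla g\|_{L^2}\cdot 2^{q'}$ is too lossy, so instead one writes $\|\Delta_q(T_{\nabla g}f)\|_{L^2}\lesssim \sum_{|q'-q|\le 4}\|S_{q'-1}\nabla g\|_{L^2}\|\Delta_{q'}f\|_{L^\infty}$ and uses $\|\Delta_{q'}f\|_{L^\infty}\lesssim 2^{q'}\|\Delta_{q'}f\|_{L^2}\lesssim 2^{-q'(s+1)}c_{q'}\|f\|_{H^{s+2}}$, which yields precisely the first term $\|f\|_{H^{s+2}}\|\nabla g\|_{L^2}$ of \eqref{model-pre-unique-1}. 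Collecting the three estimates and using the $\ell^2$ bounds on the convolution-type sequences gives the claim with $\sum_q c_q^2\le 1$.

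The main obstacle, and the only genuinely delicate point, is the interplay of the endpoint nature of the 2D embeddings: one must be scrupulous about which argument carries how many derivatives (putting $s+2$ on $f$ versus $s$ on $\nabla g$) so that each of the three pieces of Bony's decomposition lands in the correct one of the two terms on the right-hand side of \eqref{model-pre-unique-1}, and so that the frequency-localized constants form an $\ell^2$ sequence of norm $\le 1$. In particular the remainder term $\mathcal R(f,\nabla g)$ requires the extra $2^{-q'}$ decay coming from $f\in H^2$ with $d=2$ (so that $\Delta_{q'}f\in L^\infty$ with a summable-in-$q'$ norm after the Bernstein gain), which is exactly why the hypothesis is $f\in H^{s+2}$ and $\nabla g\in H^s$ rather than a symmetric splitting; once this bookkeeping is set up correctly, every individual estimate is a routine application of Bernstein's inequality, H\"older's inequality, and \eqref{commutator-compact-0}.
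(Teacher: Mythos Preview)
Your overall strategy is the same as the paper's: split via Bony's decomposition, apply the kernel commutator estimate \eqref{commutator-compact-0} to the paraproduct piece $[\Delta_q,T_f]\nabla g$, and bound the remaining pieces directly by Bernstein and frequency localization. However, your decomposition formula is incorrect. Using $f\nabla g = T_f\nabla g + T_{\nabla g}f + \mathcal R(f,\nabla g)$ for the first product and the two-term version $f\,\Delta_q\nabla g = T_f\Delta_q\nabla g + R(f,\Delta_q\nabla g)$ for the second, the identity reads
\[
[\Delta_q,f]\nabla g \;=\; [\Delta_q,T_f]\nabla g \;+\; \Delta_q T_{\nabla g}f \;+\; \Delta_q\mathcal R(f,\nabla g) \;-\; R(f,\Delta_q\nabla g),
\]
which is exactly the paper's \eqref{decomposition-commu-1}. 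Your term $-\,T_{\Delta_q f}\nabla g$ does not arise from any Bony decomposition of $f\,\Delta_q\nabla g$, and consequently the genuine fourth piece $R(f,\Delta_q\nabla g)$ is never addressed in your argument. The fix is painless: this piece is handled by the same computation as $\Delta_q\mathcal R(f,\nabla g)$ (the paper simply says ``the same estimate holds''), so once the bookkeeping is corrected your proof goes through.

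One harmless variant worth noting: you route the remainder $\Delta_q\mathcal R(f,\nabla g)$ into the $\|f\|_{H^2}\|\nabla g\|_{H^s}$ term via the $L^\infty\times L^2$ pairing $\|\Delta_{q'}f\|_{L^\infty}\lesssim 2^{-q'}c_{q'}\|f\|_{H^2}$, whereas the paper uses Bernstein $L^1\to L^2$ (gaining a factor $2^q$ in dimension two) to write $\|\Delta_q\mathcal R\|_{L^2}\lesssim 2^q\sum_{k\ge q-3}\|\Delta_k f\|_{L^2}\|\widetilde\Delta_k\nabla g\|_{L^2}$ and lands instead in the $\|f\|_{H^{s+2}}\|\nabla g\|_{L^2}$ term. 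Both allocations are legitimate; the two terms on the right-hand side of \eqref{model-pre-unique-1} are there precisely so that either routing works.
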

\begin{proof} Thanks to \eqref{bony},
we get by using a standard commutator argument that
\begin{equation}\label{decomposition-commu-1}
[\Delta_q, f]\grad g=[\Delta_q,{T}_f]\nabla f +\Delta_q
{T}_{\grad g}f+\Delta_q {\mathcal{R}}(f,\grad g)- {R}(f,
\Delta_q g).
\end{equation}
Note that  $\|S_{k-1} \grad f\|_{L^{\infty}}
\lesssim \|f\|_{H^{2}},$ one gets from \eqref{commutator-compact-0} that
\begin{equation*}
\begin{split}
\| [\Delta_q,T_f]\grad g\|_{L^2} \lesssim & 2^{-q}\sum_{|
k-q|\leq 4} \|S_{k-1}\nabla
f\|_{L^\infty}\|\nabla\Delta_{k}g\|_{L^2} \lesssim
c_{q}2^{-(s+1)q} \|f\|_{H^{2}}\| \grad g\|_{
H^{s}}.
\end{split}
\end{equation*}
While $\|S_{k-1} \grad g\|_{L^{\infty}}
\lesssim c_{k} 2^{k}\| \grad g\|_{L^2},$ which leads to
\begin{equation*}\label{model-pre-1-3-non-a}
\begin{split}
\|\Delta_q T_{\nabla g}f\|_{L^2} \lesssim \sum_{| q-k|\leq
4}\|S_{k-1} \grad g\|_{L^\infty}\|\Delta_k f\|_{L^2} \lesssim c_{q}^2
2^{-(s+1)q} \|f\|_{H^{s+2}}\| \grad g\|_{L^2}.
\end{split}
\end{equation*}
And applying  Lemma \ref{cpam-lem2.1} yields
\begin{equation*}
\begin{split}
\|\Delta_q \mathcal{R}(f, \nabla g)\|_{L^2} &\lesssim
2^{q}\sum_{k\geq q-3} \|\Delta_k f\|_{L^2}
\|\widetilde{\Delta}_k\nabla g\|_{L^2}\\
& \lesssim 2^{q}
\|f\|_{H^{s+2}}\| \grad g\|_{L^2} \sum_{k\geq q-3}c_{k}^2 2^{-(s+2)k} \lesssim c_{q}^2
2^{-(s+1)q} \|f\|_{H^{s+2}}\| \grad g\|_{L^2}.
\end{split}
\end{equation*}
The same estimate holds for $\| R(f,\Delta_q\nabla g)\|_{L^2}$.
Substituting  the above estimates into \eqref{decomposition-commu-1}, we
conclude the proof of \eqref{model-pre-unique-1}.
\end{proof}

\begin{rmk}\label{periodic-LP-1}
It is worth pointing out that all the properties described as above remain true in the periodic setting provided the
dyadic blocks have been defined as in \cite{DR}.
\end{rmk}

\end{document}